\numberwithin{equation}{section}
\newtheorem{theorem}{Theorem}[section]
\newtheorem{lemma}[theorem]{Lemma}
\newtheorem{definition}[theorem]{Definition}
\newtheorem{corollary}[theorem]{Corollary}
\newtheorem{proposition}[theorem]{Proposition}
\newtheorem{remark}[theorem]{Remark}
\title[Broken ray transform for twisted geodesics]{Broken ray transform for twisted geodesics on surfaces with a reflecting obstacle}
		\author[S.R. Jathar]{Shubham R. Jathar}
\address{ Indian Institute of Science Education and Research (IISER) Bhopal, India \& Computational Engineering, School of Engineering Sciences,
Lappeenranta-Lahti University of Technology LUT, Lappeenranta, Finland}
\email {shubham.jathar@lut.fi}
\author[M. Kar]{Manas Kar}
\address{ Indian Institute of Science Education and Research (IISER) Bhopal, India}
\email{manas@iiserb.ac.in}
\author[J. Railo]{Jesse Railo}
\address{ Department of Pure Mathematics and Mathematical Statistics,
University of Cambridge,
Cambridge CB3 0WB, UK \& Computational Engineering, School of Engineering Sciences,
Lappeenranta-Lahti University of Technology LUT, Lappeenranta, Finland}
\email{jesse.railo@lut.fi}
\newcommand{\R}{{\mathbb R}}
\newcommand{\Z}{{\mathbb Z}}
\newcommand{\id}{\mathrm{Id}}
\newcommand{\E}{\mathcal E}
\newcommand{\rR}{\mathcal R}
\NewDocumentCommand{\sff}{}{\mathrm{I\!I}}
\newcommand{\norm}[1]{\lVert #1 \rVert}
\newcommand{\abs}[1]{\left\lvert #1 \right\rvert}%absolute value
\newcommand{\ip}[2]{\left\langle #1,#2 \right\rangle}%inner product or duality pairing
\DeclareMathOperator{\inte}{int} %divergence
\begin{document}
	
	\maketitle
	\begin{abstract} 
		We prove a uniqueness result for the broken ray transform acting on the sums of functions and $1$-forms on surfaces in the presence of an external force and a reflecting obstacle. We assume that the considered twisted geodesic flows have nonpositive curvature. The broken rays are generated from the twisted geodesic flows by the law of reflection on the boundary of a suitably convex obstacle. Our work generalizes recent results for the broken geodesic ray transform on surfaces to more general families of curves including the magnetic flows and Gaussian thermostats.
		\medskip
		
		\noindent{\bf Keywords.} geodesic ray transform, magnetic flows, Gaussian thermostats, broken rays, inverse problems.
		
		\noindent{\bf Mathematics Subject Classification (2020)}: 44A12, 58C99, 37E35
  %44A12  	Radon transform
  %58J90  	Applications of PDEs on manifolds
  %37E35  	Flows on surfaces

	\end{abstract}

	\tableofcontents

\section{Introduction}

This article studies generalizations of the geodesic ray transform to general families of curves. Our main focus will be in broken ray tomography where the trajectories of particles may reflect from the boundary of a reflecting obstacle according to the law of reflection. Furthermore, we consider a situation where the trajectories are influenced by an external force such a magnetic field. Our study limits to the two dimensional case. Our main result, stated later in Theorem \ref{thm: main theorem 1}, shows that a function is uniquely determined from the collection of all of its line integrals over the twisted broken rays. We also obtain an analogous result corresponding to vector field tomography with its natural gauge.

Let $(M,g)$ be a smooth Riemannian surface with smooth boundary and $\lambda \in C^\infty(SM)$. We say that a curve $\gamma$ is a twisted geodesics if it satisfies the $\lambda$-geodesic equation
\begin{equation}\label{eq:lambda-geodesic-eq:intro}
D_t\dot{\gamma}=\lambda(\gamma,\dot{\gamma})i\dot{\gamma}
\end{equation}
where $i$ is the rotation by $90$ degrees counterclockwise.
The term $\lambda(\gamma,\dot{\gamma})i\dot{\gamma}$ represents an external force that pushes a particle out from its usual geodesic trajectory where a particle without any influence from an external force would continue its motion. The case when $\lambda(x,v)$ does not depend on the vertical variable $v$ is called magnetic and the case where $\lambda(x,v)$ is linear in $v$ is called Gaussian thermostatic, both widely studied in dynamics \cite{Dairbekov:Paternain:2007:Entropy1,Dairbekov:Paternain:2007:II,Mettler-Paternain-vortices-2022, Smale67-differentiable-dynamical-systems-thermostats,Wojtkowski:2000,Wojtkowski:2000b}.

The geodesic ray transform and closely related Radon transforms are studied by many authors and the area has a long history starting from the early twentieth century \cite{Helgason:2011:book,Ilmavirta:Monard:2019,GIP2D}. More recent advances include the solenoidal injectivity of tensor tomography in two dimensions \cite{Muhometov:1977,Paternain_Salo_Uhlmann2013,Sharafutdinov:2007}. In higher dimensions, the geodesic ray transforms are fairly well-understood in negative curvature \cite{Guillarmou:Paternain:Salo:Uhlmann:2016,Paternain-Salo-sharp-stability-negative-curvature,Paternain_Salo_Uhlmann2015}, and when a manifold has a strictly convex foliation \cite{deHoop-Uhlmann-Zhai-local-high-order-tensor-fields,Paternain:Salo:Uhlmann:Zhou:2019,Stefanov:Uhlmann:Vasy:2018,Uhlmann:Vasy:2016}. The geodesic ray transform is closely related to the boundary rigidity problem \cite{Pestov-Uhlmann-boundary-rigidity,Stefanov-Uhlmann-Vasy-boundary-rigidity} and the spectral rigidity of closed Riemannian manifolds \cite{Guillarmou-Lefeuvre-length-spectrum-rigidity,guillarmou-lefeuvre-paternain-anosov-length-spectrum-2023,PSU-spectral-rigidity-anosov-2d}. Other recent considerations include generalizations of many existing results to some classes of open Riemannian manifolds \cite{Eptaminitakis-Graham-assymptotically-conic,Graham-etal-assymptotically-hyperbolic,Guillarmou-Lassas-Tzou-assymptotically-conic,Lehtonen-Railo-Salo-Cartan-Hadamard} and to the matrix weighted ray transforms \cite{Ilmavirta-Railo-piecewise-matrix-weighted,Paternain:Salo:Uhlmann:Zhou:2019} as well as their statistical analysis \cite{Monard-Nickl-Paternain-nonabelian-raytransform,Monard-Nickl-Paternain-statistical-guarantees}.

The ray transforms for twisted geodesics and general families of curves have been studied recently in \cite{Assylbekov:Dairbekov:2018,Zhang:2023} and in the appendix of \cite{Uhlmann:Vasy:2016} by Hanming Zhou. It is shown in \cite{Assylbekov:Dairbekov:2018} that the twisted geodesic ray transform is injective on the simple Finslerian surfaces. For the most recent other studies, we refer to \cite{Zhang:2023}. In \cite{jathar2023loop}, the injectivity of the matrix attenuated and nonabelian ray transforms for the nontrapping magnetic and thermostatic flows on compact surfaces with boundary are studied. We give a more detailed account of the works that study inverse problems for the magnetic or Gaussian thermostat flows in Sections \ref{sec:magnetic} and \ref{sec:thermostat}, respectively. 

The broken ray transform has been studied extensively. In the case of a strictly convex obstacle, the uniqueness result for the broken ray transform of scalar functions on Riemannian surfaces of nonpositive curvature were obtained in \cite{Ilmavirta:Salo:2016}. This result was later generalized to higher dimensions and tensor fields of any order in \cite{Ilmavirta:Paternain:2022}. In the case of rotational (or spherical) symmetry, one may sometimes solve these and related problems using local results and data avoiding the obstacle when the manifold satisfies the Herglotz condition \cite{deHoop-Ilmavirta-Katsnelson-spherical-spectral-rigidity,Ilmavirta-Mönkkönen-spherical-finslerian-ray,Sharafutdinov:1997}. Broken lens rigidity was studied recently in \cite{deHoop-etall-broken-scattering-relation}, and a broken non-Abelian ray transform in Minkowski space in \cite{St-Amant-broken-non-abelian-Minkowski-space}. Other geometric results include boundary determination from a broken ray transform \cite{Ilmavirta:2014} and a reflection approach using strong symmetry assumptions \cite{Ilmavirta:2015}, for example letting to solve the broken ray transforms on flat boxes over closed billiard trajectories \cite{Ilmavirta-torus,Hubenthal-flat-reflection}. Numerical reconstruction algorithms and stability for the mentioned problem on the flat boxes would follow directly from \cite{Ilmavirta-Koskela-Railo-2020-torus-computed,Railo-fourier-periodic-radon}. Artifacts appearing in the inversion of a broken ray transform was studied recently in the flat geometry \cite{Zhang:2020}. We refer to the work \cite{Eskin-multiple-obstacles-2004} regarding a possibility to have more than just one reflecting obstacle. Finally, some related results without proofs are stated in the setting of curve families in the Euclidean disk \cite{Mukhometov-1991}.

\subsection{Main result}

We briefly recall the setting of our work, for further details we point to the later sections. Let $(M,g)$ be a compact oriented smooth Riemannian surface with smooth boundary and $\lambda \in C^\infty(SM)$. Assume that $\partial M = \E \cup \rR$ where $\E$ and $\rR$ are two relatively open disjoint subsets. Let $\nu$ denote the inward unit normal. We define the reflection map $\rho: \partial SM \rightarrow \partial SM$ by
\[
\rho(x, v)=\left(x, v-2\langle v, \nu\rangle_g \nu\right).
\]  A curve $\gamma$ on $M$ is called a \emph{broken $\lambda$-ray} if $\gamma$ is a $\lambda$-geodesic in $\inte(M)$ and reflects on $\mathcal{R}$ according to the law of reflection \begin{equation}\rho(\gamma(t_0-),\dot{\gamma}(t_0-))=(\gamma(t_0+),\dot{\gamma}(t_0+))\end{equation} whenever there is a reflection at $\gamma(t_0) \in \mathcal{R}$.

We call a dynamical system $(M,g,\lambda,\mathcal{E})$ \emph{admissible} (cf. Definition \ref{def:admis:lam:brok} with more details) if
\begin{enumerate}[(i)]
\item the emitter $\mathcal{E}$ is strictly $\lambda$-convex;
\item the obstacle $\mathcal{R}$ has admissible signed $\lambda$-curvature;
\item the Gaussian $\lambda$-curvature of $(M,g,\lambda)$ is nonpositive;
\item \label{eq:nontrapping-intro}the broken $\lambda$-geodesic flow is nontrapping;
\item there exists $a > 0$ such that every broken $\lambda$-ray $\gamma$ has at most one reflection at $\mathcal{R}$ with $\abs{\ip{\nu}{\dot{\gamma}}}<a$.
\end{enumerate}
\noindent
The \emph{broken $\lambda$-ray transform} of $f\in C^2(SM)$ is defined by
\begin{equation}\label{eq:integral_equation-intro}
If(x,v)=\int_0^{\tau_{x,v}}f(\phi_t(x,v))dt, \ \ \ (x,v) \in \pi^{-1}\E,
\end{equation}
where $\phi_t(x,v) = (\gamma_{x,v}(t), \dot\gamma_{x,v}(t))$ is the broken $\lambda$-geodesic flow, $\tau_{x,v}$ is the travel time of $\phi_t(x,v)$ and $\pi: SM \to M$ is the projection $\pi(x,v)=x$. Our main theorem is the following uniqueness result for the sums of functions and $1$-forms.
\begin{theorem}\label{thm: main theorem 1}
Let $(M,g,\lambda,\E)$ be an admissible dynamical system with $\lambda\in C^{\infty}(SM)$. Let $f(x,\xi)=f_0(x)+\alpha_j(x)\xi^j$ where $f_0 \in C^2(M)$ is a function and $\alpha$ is a $1$-form with coefficients in $C^2(M)$. If $If=0$, then $f_0=0$ and $\alpha=dh$ where $h \in C^3(M)$ is a function such that $h|_{\E}=0$.
\end{theorem}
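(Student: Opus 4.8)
\noindent\emph{Proof strategy.} The plan is to reduce the statement to a first–order transport problem on $SM$ and then run a Pestov (energy) identity adapted to the $\lambda$–geodesic generator, using the sign conditions built into admissibility to control every boundary contribution. Throughout write $X$ for the geodesic vector field, $V$ for the vertical vector field, and $G=X+\lambda V$ for the generator of the broken $\lambda$–geodesic flow $\phi_t$, so that $\frac{d}{dt}(w\circ\phi_t)=Gw$ for $w\in C^1(SM)$ away from reflection times.

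First I would introduce the integral function
\[
u(x,v)=\int_0^{\tau_{x,v}}f(\phi_t(x,v))\,dt,\qquad (x,v)\in SM .
\]
By the nontrapping assumption the travel time $\tau_{x,v}$ is finite, and since $f\in C^2$ the function $u$ is as regular as the flow allows: it is differentiable along each trajectory and, crucially, it is single valued on $SM$ because the law of reflection identifies the incoming and outgoing jets at each reflection point, so that $u$ obeys the matching condition $u=u\circ\rho$ on $\pi^{-1}\rR$. Differentiating along the flow gives the transport equation $Gu=-f$ on $\inte(SM)$, and the hypothesis $If=0$ says precisely that $u$ vanishes on the outgoing part of $\pi^{-1}\E$. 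The whole theorem then follows from the single claim that $Vu\equiv 0$: if $u=u(x)$ depends on $x$ only, then $Gu=Xu=\partial_j u(x)\,\xi^j$, and matching fiber–degrees in $Gu=-(f_0+\alpha_j\xi^j)$ forces the degree–zero part $f_0=0$ and the degree–one part $\alpha=-\,du$. Setting $h=-u$ yields $\alpha=dh$ with $h|_{\E}=0$ (from $u|_{\E}=0$, since $u$ is fiber–constant and vanishes on the outgoing boundary over $\E$); and since $dh=\alpha$ has $C^2$ coefficients we get $h\in C^3(M)$, giving exactly the stated conclusion.

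To prove $Vu\equiv 0$ I would invoke the twisted Pestov identity for the generator $G$, the analogue for $\lambda$–geodesic flows of the classical surface identity, which schematically takes the form
\[
\norm{VGu}^2=\norm{GVu}^2-\ip{\K_\lambda\,Vu}{Vu}+\norm{Gu}^2+\mathcal{B},
\]
where $\K_\lambda$ is the Gaussian $\lambda$–curvature of $(M,g,\lambda)$ and $\mathcal{B}$ collects the boundary terms generated by the two integrations by parts on $\partial SM$. Assumption (iii), $\K_\lambda\le 0$, makes the curvature term nonnegative, so the bulk of the right–hand side has a definite sign. Combining this with the fact that the data $f=-Gu$ has fiber–degree at most one, the energy estimate forces the fiberwise Fourier expansion of $u$ to terminate at degree zero, i.e. $Vu=0$; this is the expected description of the kernel for degree–one data, the twisted broken–ray counterpart of the tensor tomography results of \cite{Ilmavirta:Salo:2016,Ilmavirta:Paternain:2022,Assylbekov:Dairbekov:2018}. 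The separation of $\mathcal{B}$ over $\partial M=\E\cup\rR$ is where the remaining admissibility hypotheses enter: over the emitter the strict $\lambda$–convexity (i) together with $u|_{\E}=0$ gives the correct sign, while over the obstacle the reflection relation $u=u\circ\rho$ must be inserted so that the $\rR$–contribution can be rewritten in terms of the signed $\lambda$–curvature of $\rR$.

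The main obstacle, exactly as in the nonmagnetic case, is controlling the sign of this reflected boundary term on $\rR$. Away from grazing directions the admissible signed $\lambda$–curvature hypothesis (ii) yields the desired inequality directly; the difficulty is concentrated at nearly tangential reflections, where $\ip{\nu}{\dot\gamma}$ is small and the natural weight degenerates. This is precisely the role of assumption (v): limiting each broken ray to at most one reflection with $\abs{\ip{\nu}{\dot\gamma}}<a$ confines the bad set and lets one absorb the degenerate piece, so that the total obstacle contribution retains the favorable sign and $\mathcal{B}\ge 0$. Once $\mathcal{B}\ge 0$ is established, the identity delivers $Vu\equiv 0$, and the degree–matching argument of the second paragraph completes the proof. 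The two steps demanding the most care are the precise derivation of $\mathcal{B}$ for the twisted generator $G=X+\lambda V$ (the $\lambda V$ term modifies the classical boundary expressions) and the grazing estimate governed by (v).
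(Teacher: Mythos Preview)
Your overall strategy matches the paper's: reduce to the transport equation $Fu=-f$ (your $G$ is the paper's $F=X+\lambda V$), apply the twisted Pestov identity with boundary terms, and use admissibility to force $FVu=0$ and then $Vu=0$, after which your degree-matching paragraph is correct. However, you have misidentified where assumptions (i) and (v) enter, and this would derail the execution.

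The boundary term is simpler than you suggest, while the regularity of $u$ is much harder. Since $If=0$ gives $u|_{\pi^{-1}\E}=0$ and the reflection law gives $u=u\circ\rho$ on $\pi^{-1}\rR$, one has $u=u\circ\rho$ on \emph{all} of $\partial SM$; under this single condition the boundary term collapses (Corollary \ref{cor:odd:even:simply}) to $-\bigl(((\kappa_\lambda)_e+(\eta_\lambda)_e)Vu,Vu\bigr)_{\partial SM}$. Over $\E$ this vanishes because $Vu|_{\pi^{-1}\E}=0$, and over $\rR$ it is nonnegative by (ii) alone: there is no degenerate weight at grazing, and neither (i) nor (v) is used for the sign. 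Those hypotheses are instead essential for proving $u\in C^2(\inte SM)\cap\mathrm{Lip}(SM)$ (Lemma \ref{lm:reg:u}), which you pass over but which is the most technical step. Near a tangential reflection $\tau$ and $u$ fail to be differentiable; the paper resolves this via the dual-flow identity $u(x,v)=-u^{-}(x,-v)$ (Lemma \ref{dual_tr_lem}), so that by (v) at each interior point at least one of the two directions has only transversal reflections, together with uniform broken $\lambda$-Jacobi growth estimates (Corollary \ref{cor:final:jac:est}) for the Lipschitz bound and the strict $\lambda$-convexity of $\E$ for the exit-time regularity and the nearly-tangential boundary estimate. Without this regularity the Pestov identity cannot be applied to $u$; the paper closes the gap with an approximation argument (Lemma \ref{lm:pestov:eo}) that itself requires $Pu\in L^2$, which again uses the structure of $f$.
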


Our proof is based on the ideas introduced in \cite{Ilmavirta:Paternain:2022,Ilmavirta:Salo:2016} and the Pestov identity for the twisted geodesic flows in \cite{Assylbekov:Dairbekov:2018,Dairbekov:Paternain:2007:Entropy1}. The proof could be split into three main parts, each of them having their own technical challenges, resolved in our work for a general $\lambda \in C^\infty(SM)$.
\begin{enumerate}[(i)]
\item One has to analyze a generalized Pestov identity with boundary terms and decompose the boundary terms into the even and odd parts with respect to the reflection;
\item One has to reduce the problem into a related transport problem for the broken $\lambda$-geodesics and observe if the Pestov identity and the analysis of the first step applies to the solutions of the transport equation, then the problem can be solved;
\item One has to show sufficient regularity for the solutions of the broken transport equation. This can be done by analyzing carefully the behavior of the broken $\lambda$-rays, broken $\lambda$-Jacobi fields at the reflection points, and utilizing a time-reversibility property for the pair of flows with respect to $\lambda(x,v)$ and $-\lambda(x,-v)$.
\end{enumerate}

\section{Preliminaries}
In this section, we introduce our notation and concisely present many primary definitions employed throughout the article.
We closely follow the notation used in the recently published book by Paternain, Salo and Uhlmann \cite{GIP2D}. We also refer to \cite{Lee:riem:2nd} for the basics of Riemannian geometry. 
\subsection{Basic notation}
Throughout the article, we denote by $(M,g)$ a complete oriented smooth Riemannian manifold with or without boundary. We always assume that $M$ is a surface, i.e. $\dim(M)=2$. We denote the Levi-Civita connection or covariant derivative of $g$ by $\nabla$, and the determinant of $g$ by $\abs{g}$. When the covariant derivative is restricted to a smooth curve $\gamma$, we simply write $D_t =\nabla_{\dot{\gamma}}$. We sometimes emphasize the base point $x \in M$ in the notation of the metric as $g_x$ and other operators but this is often omitted. We denote the volume form of $(M,g)$ by $dV^2 := \abs{g}^2 dx_1 \wedge dx_2$, expressed in any local positively oriented coordinates. For any vector $v \in T_xM$, let $v^\perp \in T_xM$ denote the unique vector obtained by rotating $v$ counterclockwise by $90^\circ$. This vector satisfies:
\[\left|v^{\perp}\right|_g=|v|_g, \quad\left\langle v, v^{\perp}\right\rangle=0\]
and forms a positively oriented basis of $T_xM$ with $v$, when $v \neq 0$. Note that, often, we may also write $v^\perp$ as $iv$. Given a vector $v \in T_xM$ and a positive orientation on $M$, we denote by $v_\bot = -v^\bot$ the clockwise rotation. We denote by $K$ the Gaussian curvature of $(M,g)$.

The signed curvature of the boundary $\partial M$ is defined as
\begin{equation}
\kappa := \langle D_t \dot\delta(t), \nu \rangle_g
\end{equation}
where $\delta(t)$ represents an oriented unit-speed curve that parametrizes the boundary $\partial M$ and $\nu$ is the inward unit normal along $\delta(t)$.
For a comprehensive explanation, please refer to
 \cite[Chapter 9, p. 273]{Lee:riem:2nd}. 
Furthermore, we define the second fundamental form of the boundary $\partial M$ as follows: \begin{equation}
    \sff_x(v,w) := -\ip{\nabla_v \nu}{w}_g
\end{equation}
where $x \in \partial M$ and $v,w \in T_x\partial M$ (cf. \cite[p. 56]{GIP2D}). We say that $\partial M$ is strictly convex at $x \in \partial M$ if $\sff_{x}$ is positive definite, i.e. $\sff_x(v,v) > 0$ for any $v \in T_x\partial M\setminus\{0\}$. We say that $M$ has a strictly convex boundary if $\partial M$ is strictly convex for any $x \in \partial M$. We say that $\partial M$ is strictly concave at $x \in \partial M$ if $\sff_{x}$ is negative definite, i.e. $\sff_x(v,v)< 0$ for any $v \in T_x\partial M\setminus\{0\}$. The relation between the signed curvature and the second fundamental form is given by $\kappa = \sff(\dot{\delta},\dot{\delta})$ (cf. \cite[Chapters 8 and 9]{Lee:riem:2nd}). If the boundary is strictly convex, it implies that the signed curvature is positive, whereas if it is strictly concave, the signed curvature is negative.

\subsection{Analysis on the sphere bundle}

 In this article, we often assume that $M$ is a Riemannian manifold with smooth boundary $\partial M$. We denote by $SM$ the unit sphere bundle of $M$, i.e.
\begin{equation}
    SM := \{\,(x,v) \,;\, v \in T_xM, \abs{v}_g=1\,\}.
\end{equation}
The boundary of $SM$ is given by
\begin{equation}\partial SM:= \{\,(x,v)\in SM\,;\, x \in \partial M\,\}.
\end{equation}
We define the influx and outflux boundaries of $SM$ as the following sets
\begin{equation}
    \partial_{\pm} SM :=\{\,(x,v) \in \partial SM \,;\, \pm \ip{v}{\nu(x)}_g\geq0\,\}.
\end{equation}
The glancing region is defined as $\partial_0SM := \partial_+SM \cap \partial_-SM = S(\partial M)$. We denote by $dS_x$ the volume form of $(S_xM,g_x)$ for any $x \in M$. The sphere bundle $SM$ is naturally associated with the measure
\begin{equation}
    d\Sigma^3 := dV^2 \wedge dS_x
\end{equation}
called the Liouville form.

Let us denote by $X$ the geodesic vector field, $V$ the vertical vector field and the orthogonal vector field $X_\bot := [X,V]$ (cf. \cite[Chapter 3.5]{GIP2D} for more details). The following structure equations hold
\begin{equation}\label{eq: basic commutators}
    [X,V] = X_\bot, \quad [X_\bot,V] =-X,\quad [X,X_\bot] =-KV
\end{equation}
where $K$ is the Gaussian curvature of $(M,g)$ \cite[Lemma 3.5.5]{GIP2D}. The sphere bundle $SM$ is equipped with the unique Riemannian metric $G$ such that $\{X,-X_\bot,V\}$ forms a positively oriented orthonormal frame. The metric $G$ is called the Sasaki metric, and it holds that $dV_G = d\Sigma^3$ \cite[Lemma 3.5.11]{GIP2D}. We denote by $dV^1$ the volume form of $(\partial M,g)$. This leads to the definition of the volume form
\begin{equation}
    d\Sigma^2 := dV^1 \wedge dS_x
\end{equation}
on $\partial SM$. We note that $d\Sigma^2 = dV_{\partial SM}$ where on the right hand side the volume form is induced by the Sasaki metric. 

We define the following $L^2$ inner products
\begin{equation}
    (u,w)_{SM} := \int_{SM} u\overline{w}d\Sigma^3, \quad (f,g)_{\partial SM} := \int_{\partial SM} f\overline{g}d\Sigma^2.
\end{equation}
We next recall simple integration by parts formulas. For any $u, w \in C^1(SM)$, the following formulas hold \cite[Proposition 3.5.12]{GIP2D}:
\begin{align}\label{eq:int:parts}
\begin{split}
(X u, w)_{S M} & =-(u, X w)_{S M}-(\langle v, \nu\rangle u, w)_{\partial S M} \\
\left(X_{\perp} u, w\right)_{S M} & =-\left(u, X_{\perp} w\right)_{S M}-\left(\left\langle v_{\perp}, \nu\right\rangle u, w\right)_{\partial S M} \\
(V u, w)_{S M} & =-(u, V w)_{S M}.
\end{split}
\end{align}

Finally, we recall the vertical Fourier decomposition. We define the following spaces of eigenvectors of $V$
\begin{equation}\label{eq:GK-eig}
    H_k := \{\,u \in L^2(SM)\,;\, -iVu = ku\,\}, \quad \Omega_k := \{\,u\in C^\infty(SM)\,;\, -iVu=ku\,\}
\end{equation}
for any integer $k \in \Z$. It holds that any $u \in L^2(SM)$ has a unique $L^2$-orthogonal decomposition
\begin{equation}\label{eq:orthogonal}
    u=\sum_{k=-\infty}^\infty u_k, \quad \norm{u}^2 = \sum_{k=-\infty}^\infty \norm{u_k}^2, \quad u_k \in H_k.
\end{equation}
 If $u \in C^\infty(SM)$, then $u_k \in C^\infty(SM)$ and the series converges in $C^\infty(SM)$.

We next discuss some important boundary operators following \cite[Lemma 4.5.4]{GIP2D}. We define the tangential vector field $T$ on $\partial SM$ by setting that
\begin{equation}
    T := (V\mu)X + \mu X_\bot |_{\partial SM}
\end{equation}
where $\mu(x,v) := \ip{\nu(x)}{v}_g$. The Pestov identity with boundary terms is due to Ilmavirta and Salo \cite[Lemma 8]{Ilmavirta:Salo:2016}; see also \cite[Proposition 4.5.5]{GIP2D} and the higher dimensional generalization in \cite[Lemma 8]{Ilmavirta:Paternain:2022}. For any $u \in C^2(SM)$ it holds that
\begin{equation}
    \label{eq:basic-pestov-with-bdry}
    \norm{VXu}^2_{SM}=\norm{XVu}^2_{SM} -(KVu,Vu)_{SM}+\norm{Xu}^2_{SM} + (Tu,Vu)_{\partial SM}
\end{equation}
whenever $(M,g)$ is a compact Riemannian surface with smooth boundary. We generalize \eqref{eq:basic-pestov-with-bdry} in Proposition \ref{lm:pestov:g} to the case where $X$ is replaced by the generator of a twisted geodesic flow.

\subsection{Twisted geodesic flows}
\label{sec:twisted}
We first recall the concept of $\lambda$-geodesic flows from the lectures of Merry and Paternain \cite[Chapter 7]{Marry:Paternain:2011:notes}. Let $(M,g)$ be a complete oriented Riemannian surface (with or without boundary) and $\lambda \in C^\infty(SM)$ be a smooth real valued function. We say that a curve $\gamma: [a,b] \to M$ is a \emph{$\lambda$-geodesic} if it satisfies the \emph{$\lambda$-geodesic equation}
\begin{equation}\label{eq:lambda-geodesic-eq}
D_t\dot{\gamma}=\lambda(\gamma,\dot{\gamma})i\dot{\gamma}.
\end{equation}
When $\lambda\equiv0$, then the $\lambda$-geodesics are the usual geodesics of $(M,g)$.  One may think that the function $\lambda$ twists the usual geodesics in order to model trajectories of particles when moving in the presence of external forces. When $\lambda$ is a smooth function on $M$ or $1$-form, then the twisted geodesics correspond to the magnetic and thermostatic geodesics, respectively. For other advances in the context of inverse problems for $\lambda$-geodesics, we refer to \cite{Assylbekov:Dairbekov:2018,Zhang:2023}. In particular, the class of $\lambda$-geodesics is large and can be characterized with only three natural properties of a curve family, for details see \cite[Theorem 1.4]{Assylbekov:Dairbekov:2018}.

Let $\gamma_{x,v}$ be the unique $\lambda$-geodesic  with the initial condition $(\gamma_{x,v}(t),\dot{\gamma}_{x,v}(t))=(x,v)$ and solving \eqref{eq:lambda-geodesic-eq}. As in \cite[Exercise 7.2]{Marry:Paternain:2011:notes}, one may define the \emph{$\lambda$-geodesic flow} by setting that
\begin{equation}
    \phi_t: SM \to SM,\quad \phi_t(x,v) = (\gamma_{x,v}(t),\dot{\gamma}_{x,v}(t)).
\end{equation}
The \emph{infinitesimal generator of the $\lambda$-geodesic flow} $F$ is given by
\begin{equation}
    F = X+\lambda V.
 \end{equation}
Using \eqref{eq: basic commutators}, one may derive the following commutator formulas \cite[p. 537]{Dairbekov:Paternain:2007:Entropy1}:
\begin{equation}\label{eq: twisted-commutators}
    [V,F]=-X_\bot + V(\lambda)V, \quad [V,X_\bot]=F-\lambda V, \quad [F,X_\bot] = \lambda F - (K+X_\bot(\lambda)+\lambda^2)V.
\end{equation}
We use the following notations \begin{equation}
    P := VF, \quad \widetilde{P} := FV.
\end{equation}
Notice that the formal $L^2$ adjoint of the operator $P$ is $P^* =(F+V(\lambda))V \neq \widetilde{P}$. Let us define the \emph{$\lambda$-curvature} of $(M,g,\lambda)$ as a map in $C^\infty(SM)$ by the formula 
\begin{equation}
K_{\lambda} := K+X_\bot(\lambda)+\lambda^2+F(V(\lambda)).
\end{equation}
We can now recall the following generalized Pestov identity by Dairbekov and Paternain for closed surfaces $M$ \cite[Theorem 3.3]{Dairbekov:Paternain:2007:Entropy1}:
\begin{equation}\label{eq: general Pestov}
    \norm{Pu}_{SM}^2 = \norm{\widetilde{P}u}_{SM}^2 - (K_{\lambda} Vu,Vu)_{SM}+\norm{Fu}_{SM}^2
\end{equation}
for any $u \in C^\infty(SM)$. This also holds on compact surfaces with boundary if one additionally assumes that $u|_{\partial SM}=0$. We generalize \eqref{eq:basic-pestov-with-bdry} and \eqref{eq: general Pestov} to the surfaces with smooth boundary without making the simplifying assumption that $u|_{\partial SM}=0$ (cf. Proposition \ref{lm:pestov:g}). We also remark that similar Pestov identities, but in a slightly less explicit form, were derived by Assylbekov and Dairbekov for the $\lambda$-geodesic flows on Finslerian surfaces \cite[Theorem 2.3]{Assylbekov:Dairbekov:2018}. In turn, the generalized Pestov identity with boundary terms is used to study generalized broken ray transforms.

We define the \emph{signed $\lambda$-curvature} as 
\begin{align}\label{eq:kappa:lam}
  \kappa_{\lambda}(x,v)&:=  \kappa+\left\langle v_{\perp}, \nu\right\rangle \lambda(x, v)=\kappa-\langle\lambda(x, v) i v, \nu(x)\rangle=\kappa-\langle \nu(x),\lambda(x, v) i v\rangle.
\end{align}
Additionally, we introduce a related term:
\begin{equation}\label{eq:eta:lam}
    \eta_{\lambda}(x,v):=\langle V(\lambda)(x, v) v, \nu\rangle,
\end{equation}
which will appear in the Pestov identity \eqref{eq:Pestov3.1}. We remark that $\kappa_\lambda$ and $\eta_\lambda$ depend only on the values of $\lambda$ on $\partial SM$. For further details, see Subsection \ref{sec:signed-preliminaires}.

\subsubsection{Magnetic flows}\label{sec:magnetic} We refer to the articles of Arnold \cite{arnold-61-magnetic-flows} and Anosov--Sinai \cite{Anosov-Sinai-67-magnetic} as first mathematical studies of magnetic flows. We will mainly follow the notation used by Ainsworth in the series of works \cite{Ainsworth:2013,Ainsworth:2015,Ainsworth-Assylbekov-2015-magnetic-range}, considering the integral geometry of magnetic flows. We further note the following works related to different inverse problems for magnetic flows \cite{Assylbekov-Zhou-2015-magnetic-rigidity-potential,Dairbekov-Paternain-SU-2007-magnetic-rigidity,Herros2012-magnetic-scattering-rigidity,Herros-Vargo-2011,MR2373809,MR2320165}, including the boundary, lens and scattering rigidity problems.

Let $\Omega$ be a closed $2$-form on $M$ modeling a magnetic field. The Lorentz force $Y: TM \to TM$ associated with the magnetic field $\Omega$ is the unique bundle map such that
\begin{equation}
\Omega_x(\xi,\eta)=\ip{Y_x(\xi)}{\eta}_g, \quad \forall x \in M, \ \xi,\eta \in T_xM.
\end{equation}
We say that $\gamma$ is a \emph{magnetic geodesic} if it satisfies the magnetic geodesic equation
\begin{equation}\label{eq: magnetic geodesic equation}
D_t\dot{\gamma}=Y(\dot{\gamma}).
\end{equation}
Notice now that since $M$ is orientable, there exists a unique function $\tilde{\lambda}: M \to \R$ such that $\Omega = \tilde{\lambda}dV^2$. We may define $\lambda = \tilde{\lambda} \circ \pi$. Now it holds that $\gamma$ solves \eqref{eq: magnetic geodesic equation} if and only if it solves \eqref{eq:lambda-geodesic-eq}. We may define the \emph{magnetic flow} simply as the corresponding $\lambda$-geodesic flow with the fixed energy level $1/2$ corresponding to the unit speed curves.

One may also view the magnetic flow as the Hamiltonian flow of $H(x,v) = \frac{1}{2}\abs{v}_g^2$ under the symplectic form
\begin{equation}
    \omega := \omega_0 + \pi^*\Omega
\end{equation}
where $\omega_0$ is the symplectic structure of $TM$ generated by the metric pullback of the canonical symplectic form on $T^*M$. The magnetic geodesics are known to be constant speed and different energy levels lead to different curves. We also remark that the magnetic flow is time-reversible if and only if $\Omega\equiv0$. Therefore, $\gamma_{x,v}(-t)$ is not a magnetic geodesic of $(M,g,\Omega)$. However, we have that the magnetic field with flipped sign $-\Omega$ reverses the orientation of geodesics, i.e. $\gamma_{x,-v}^{-\Omega}(t)=\gamma_{x,v}^{\Omega}(-t).$ One may check that $-\Omega$ is the dual of $\Omega$ in the sense of Section \ref{subsec:dualflows}.
\subsubsection{Thermostatic flows}\label{sec:thermostat}
The Gaussian thermostats concept was proposed by Hoover for the analysis of dynamical systems in mechanics \cite{Hoover:1986}, but it appears also earlier, for example, in the work of Smale \cite{Smale67-differentiable-dynamical-systems-thermostats}. The inverse problem for Gaussian thermostats has been more recently explored in the works of Dairbekov and Paternain \cite{Dairbekov:Paternain:2007:Entropy1,Dairbekov:Paternain:2007:II}. Other contributions have also been made by Assylbekov and Zhou \cite{Assylbekov:Zhou:2017}, Assylbekov and Dairbekov \cite{Assylbekov:Dairbekov:2018}, and Assylbekov and Rea \cite{Assylbekov:Rea:Arxiv:2021}. In addition, the dynamical and geometrical properties of Gaussian thermostats have been extensively studied, as demonstrated in the contributions by Wojtkowski \cite{Wojtkowski:2000,Wojtkowski:2000b}, Paternain \cite{Paternain:2007}, Assylbekov and Dairbekov \cite{Assylbekov:Dairbekov:2014}, and Mettler and Paternain \cite{Mettler:Paternain:2019,Mettler:Paternain:2020}. Gaussian thermostat also arises in Weyl geometry, see for instance \cite{Przytycki:Wojtkowski:2008}.

Consider a smooth vector field $E$ on $M$, representing an external field. A \emph{thermostatic geodesic} satisfies the equation
\begin{align}\label{eq:thermostat:geodesic}
D_t \dot{\gamma}=E(\gamma)-\frac{\langle E(\gamma), \dot{\gamma}\rangle}{|\dot{\gamma}|^2} \dot{\gamma}.
\end{align}
The flow $\phi_t=(\gamma(t), \dot{\gamma}(t))$ is called as thermostatic flow.
It is worth mentioning that thermostatic geodesics are time-reversible, meaning that $\phi_t(x,-v)=r\left(\phi_{-t}(x, v)\right)$, where $r: S M \rightarrow S M$ is the reversion map defined by $r(x, v)=(x,-v)$. When $E=0$, then the thermostatic geodesics are the usual geodesic. Given a $1$-form $\lambda$ defined by $\lambda(x, v):=\langle E(x), i v\rangle$, the equation \eqref{eq:thermostat:geodesic} can be rewritten as the corresponding $\lambda$-geodesic equation
\begin{align}
D_t \dot{\gamma}=\lambda(\gamma, \dot{\gamma}) i \dot{\gamma}.
\end{align}

\subsection{Dual \texorpdfstring{$\lambda$}{lambda}-geodesic flow}
\label{subsec:dualflows} 
It is well known that the usual geodesics are time-reversible. The magnetic geodesics are not time reversible unless $\Omega\equiv 0$ (cf. \cite[p. 537]{Dairbekov-Paternain-SU-2007-magnetic-rigidity}). In  \cite[p. 100]{Benedetti:Kang:2022}, it is mentioned that the magnetic geodesics corresponding to $\lambda$ and $-\lambda$ are one-to-one correspondence through time reversal. This means that a curve $t \mapsto \gamma_{x,v}(t)$ is a magnetic $\lambda$-geodesic if and only if the curve $t \mapsto \gamma_{x,v}(-t)$ is a magnetic $(-\lambda)$-geodesic. However, the thermostatic $\lambda$-geodesic flow is time-reversible, see for instance in \cite[p. 88]{Paternain:2007}). Therefore, the $\lambda$-geodesic flow in the case of $\lambda \in C^\infty(SM)$ is in general not time-reversible. 

Next, we will define the corresponding dual $\lambda$-geodesic flow. We can define the time-reversed dynamical system related to $\lambda$ by setting \begin{equation}\lambda^{-}(x,v):=-\lambda(x,-v).\end{equation} It now follows that $\gamma_{x,-v}^{-}(t)=\gamma_{x,v}(-t)$ where $\gamma_{x,-v}^{-}$ is a unique $\lambda^-$-geodesic with initial data $(x,-v)$. We call $\lambda^{-}$ as the \emph{dual} of $\lambda$. This time-reversibility property can be checked by substituting $\gamma_{x,-v}^{-}(t):=\gamma_{x,v}(-t)$ to the $\lambda^{-}$-geodesic equation and using the fact that $\gamma_{x,v}$ solves the $\lambda$-geodesic equation. In fact, 
\begin{align*}
    \nabla_{\dot\gamma^{-}}\dot\gamma^{-}|_{t=s}&=\lambda^{-}(\gamma(-s),\frac{d}{dt}(\gamma(-t))|_{t=s})i\left[\frac{d}{dt}(\gamma(-t))|_{t=s}\right]\\
    &=-\lambda(\gamma(-s),-(-\dot\gamma(-s)))i(-\dot\gamma(-s))\\
    &=\lambda(\gamma(-s),\dot\gamma(-s) )i(\dot\gamma(-s))
\end{align*}
and in local coordinates
\begin{align*}\nabla_{\dot\gamma}\dot\gamma|_{t=-s}
&=\ddot{\gamma}^l(-s)+\Gamma_{j k}^l(\gamma(-s)) \dot{\gamma}^j(-s) \dot{\gamma}^k(-s) \\
&=\nabla_{\dot\gamma^-}\dot\gamma^-|_{t=s}
\end{align*}
where we write simply $\gamma=\gamma_{x,v}$ and $\gamma^{-}=\gamma_{x,-v}^{-}$. Since $\nabla_{\dot{\gamma}}\dot{\gamma}=\lambda(\gamma,\dot{\gamma})i\dot\gamma$ holds for all times $s$ in the maximal domain of the $\lambda$-geodesic $\gamma_{x,v}$, we can conclude that $\gamma^{-}$ is a $\lambda^{-}$-geodesic. On the other hand, $\dot{\gamma}^{-}(0)=-v$ and $\gamma^{-}(0)=x$, which justifies writing $\gamma^{-}=\gamma_{x,-v}^{-}$. 

The generator of the dual $\lambda$-geodesic flow $\phi_t^{-}$ is simply given by $F^{-}:=X+\lambda^{-}V$. Additionally, it is worth noting that $(\lambda^{-})^{-}=\lambda$. We will use the dual flow to establish regularity results for the solutions of a broken transport equation in Sections \ref{sec:dual-transport-equation} and \ref{sec:regularity-of-solutions}. For the sake of completeness, we will discuss in Section \ref{subsec:dualflows-curva} the curvature and signed curvature of $\lambda$ and $\lambda^{-}$.

\subsection{Lemmas for twisted geodesic flows}

In the following proposition, we provide a generalized version of the Pestov identity for the generators of twisted geodesic flows. Similar identities are proved earlier in \cite[Theorem 3.3]{Dairbekov:Paternain:2007:Entropy1} for closed Riemannian surfaces, in \cite[Lemma 8]{Ilmavirta:Salo:2016} for surfaces with boundary under the condition $\lambda \equiv 0$, and for Finslerian surfaces in terms of Lie derivatives on the boundary \cite[Theorem 2.3]{Assylbekov:Dairbekov:2018}. A detailed proof is given in Appendix \ref{sec:proof-of-pestov}.
\begin{proposition}[Generalized Pestov identity]\label{lm:pestov:g} Let $(M,g)$ be a compact Riemannian surface with smooth boundary and $\lambda \in C^\infty(SM)$. If $u\in C^2(SM)$, then we have 
    \begin{align}\label{eq:Pestov3.1}
 \begin{split}
       \|Pu\|_{SM}^2&=  \|\widetilde{P}u \|_{SM}^2-( K_{\lambda}Vu,Vu )_{SM} + \|F u\|_{SM}^2+ \left(\nabla_{T,\lambda}u, Vu\right)_{\partial SM}
 \end{split}
\end{align}
where $\nabla_{T,\lambda}:=-\langle v_{\perp},\nu\rangle F-\langle v,\nu\rangle (V(\lambda))V+\langle v,\nu\rangle X_{\perp}.$
\end{proposition}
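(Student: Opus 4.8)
The plan is to reduce the identity \eqref{eq:Pestov3.1} to a controlled sequence of integrations by parts governed by the commutator relations \eqref{eq: twisted-commutators}, mimicking the derivation of the unweighted boundary identity \eqref{eq:basic-pestov-with-bdry} while tracking every $\lambda$-dependent term. First I would record the adjoint formulas obtained from \eqref{eq:int:parts}: $(Vu,w)_{SM}=-(u,Vw)_{SM}$ with no boundary contribution, $(X_\perp u,w)_{SM}=-(u,X_\perp w)_{SM}-(\ip{v_\perp}{\nu}u,w)_{\partial SM}$, and for the twisted generator $(Fu,w)_{SM}=-(u,(F+V(\lambda))w)_{SM}-(\ip{v}{\nu}u,w)_{\partial SM}$. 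The last formula records that the formal adjoint of $F$ is $-(F+V(\lambda))$, consistent with the computation $P^*=(F+V(\lambda))V$ noted after \eqref{eq: twisted-commutators}.

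For the main computation I would use the commutator $[V,F]=-X_\perp+V(\lambda)V$ to write $Pu=VFu=FVu+[V,F]u=\widetilde P u+(-X_\perp+V(\lambda)V)u$, and expand the square:
\[
\norm{Pu}_{SM}^2 - \norm{\widetilde P u}_{SM}^2 = 2\operatorname{Re}\big(\widetilde Pu,\,(-X_\perp+V(\lambda)V)u\big)_{SM} + \norm{(-X_\perp+V(\lambda)V)u}_{SM}^2.
\]
The cross term $(\widetilde Pu,-X_\perp u)_{SM}=(FVu,-X_\perp u)_{SM}$ is then evaluated by moving $F$ onto the second factor via its adjoint, commuting $F$ past $X_\perp$ through $[F,X_\perp]=\lambda F-(K+X_\perp(\lambda)+\lambda^2)V$, and commuting $X_\perp$ past $V$ through $[V,X_\perp]=F-\lambda V$. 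The term $\norm{Fu}_{SM}^2$ is produced by the $-F$ appearing in $X_\perp V=VX_\perp-F+\lambda V$, while the summands $K+X_\perp(\lambda)+\lambda^2$ of $K_\lambda$ come from $[F,X_\perp]$; the remaining summand $F(V(\lambda))$ of $K_\lambda$ is generated by carrying the $V(\lambda)V$ part of $[V,F]$ through the same manipulations. A structural feature already present when $\lambda\equiv 0$ is that the cross term $(\widetilde Pu,X_\perp u)$ reappears after the integrations by parts; I would collect these like terms and solve for the expression, which simultaneously fixes the coefficient $1$ in front of $\norm{Fu}_{SM}^2$ and of $(K_\lambda Vu,Vu)_{SM}$ and resolves an apparent factor of $2$.

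Finally, every integration by parts deposits a boundary integral over $\partial SM$ carrying a factor $\ip{v}{\nu}$ or $\ip{v_\perp}{\nu}$; since these coefficients are real, conjugation inside the Hermitian product $(\cdot,\cdot)_{\partial SM}$ lets me rewrite each contribution as a pairing against $Vu$. I would then check that the surviving boundary integrals assemble exactly into $(\nabla_{T,\lambda}u,Vu)_{\partial SM}$: the term $\ip{v}{\nu}X_\perp$ arises from integrating $F$ by parts in $(FVu,-X_\perp u)_{SM}$, the term $-\ip{v_\perp}{\nu}F$ from integrating $X_\perp$ by parts in the factor $(Vu,X_\perp Fu)_{SM}$, and $-\ip{v}{\nu}V(\lambda)V$ from the $V(\lambda)V$ piece. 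The step I expect to be the main obstacle is precisely this boundary bookkeeping: verifying that all spurious boundary contributions generated along the way cancel so that nothing beyond the three listed terms survives, while at the same time confirming that the interior zeroth-order coefficient multiplying $(Vu,Vu)_{SM}$ is exactly $K_\lambda$ rather than merely $K+X_\perp(\lambda)+\lambda^2$. Since the vector fields and the functions $\lambda,K$ are real, taking real and imaginary parts then yields \eqref{eq:Pestov3.1} for general complex $u\in C^2(SM)$.
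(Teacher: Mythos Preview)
Your strategy is sound and will produce \eqref{eq:Pestov3.1}, but it is not the route the paper takes. The paper follows Dairbekov--Paternain \cite{Dairbekov:Paternain:2007:Entropy1}: it begins from the \emph{pointwise} algebraic identity
\[
-2\,X_\perp u\cdot VFu = (Fu)^2 + (X_\perp u)^2 - (K+X_\perp\lambda+\lambda^2)(Vu)^2 - (F+V(\lambda))(X_\perp u\cdot Vu) + X_\perp(Fu\cdot Vu) - V(Fu\cdot X_\perp u),
\]
integrates it over $SM$, and evaluates the last three divergence-type terms via \eqref{eq:int:parts}; it then combines this with a second pointwise identity relating $(FVu)^2$ and $(VFu)^2$ (again integrated, producing the $FV(\lambda)$ piece of $K_\lambda$ together with one further boundary term). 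Subtracting the two integrated identities yields \eqref{eq:Pestov3.1} directly.

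By contrast, you work entirely at the level of $L^2$ inner products: expand $\|Pu\|^2-\|\widetilde Pu\|^2$ via $[V,F]$ and iterate the adjoint formulas for $F$, $X_\perp$, $V$, chasing commutators until the cross term $(\widetilde Pu,X_\perp u)$ recurs and can be solved for. Both methods are standard for Pestov-type identities; the paper's has the virtue that the three boundary contributions of $\nabla_{T,\lambda}$ appear in one step each from integrating the three total-derivative terms $(F+V(\lambda))(\cdots)$, $X_\perp(\cdots)$, $V(\cdots)$, so no cancellation of spurious boundary terms is needed---the bookkeeping you flag as the main obstacle simply does not arise. Your approach, on the other hand, stays in the $L^2$ framework throughout and makes the role of the adjoint $F^*=-(F+V(\lambda))$ more transparent; it is closer in spirit to the derivation of \eqref{eq:basic-pestov-with-bdry} in \cite[Proposition~4.5.5]{GIP2D}.
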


We say that a vector field $J$ along a $\lambda$-geodesic $\gamma$ is a $\lambda$-Jacobi field if it is a variation field of $\gamma$ through $\lambda$-geodesics, i.e. $J(t) = \partial_s \gamma_s(t)|_{s=0}$ where $\gamma_s(t)$ is a smooth one-parameter family of $\lambda$-geodesics with $\gamma = \gamma_0$ (for further details we refer to Section \ref{sec: lambda-jacobi-fields}). We will next state a useful estimate on the growth rate of $\lambda$-Jacobi fields. On compact Riemannian surfaces the norm of a Jacobi field and its covariant derivative grow at most exponentially (see e.g. \cite[Lemma 10]{Ilmavirta:Salo:2016}). Such inequalities are useful in studying the stability of geodesics and their relation to the curvature of the manifold. In the following lemma, we establish a similar result for the Jacobi equation associated with $\lambda$-geodesics. A detailed proof is given at the end of Section \ref{sec: lambda-jacobi-fields}.
\begin{lemma}\label{lm:Jacobi:est} Let $(M, g)$ be a compact Riemannian surface with or without boundary and $\lambda \in C^{\infty}(SM)$. Let $J$ be a $\lambda$-Jacobi field defined along a unit speed $\lambda$-geodesic $\gamma:[a,b]\to M$. Then the following growth estimate holds for all $t \in [a,b]$
\begin{equation}\label{eq:jac_est}
    |J(t)|^2+\left|D_t J(t)\right|^2 \leq e^{C t}\left(|J(a)|^2+\left|D_t J(a)\right|^2\right),
\end{equation}
where $C$ is a uniform constant depending only on $M,g$ and $\lambda$.
\end{lemma}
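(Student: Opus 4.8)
The plan is to reduce the statement to a Grönwall estimate for a second order linear ODE along $\gamma$. \emph{First} I would derive the \emph{$\lambda$-Jacobi equation} by linearizing the $\lambda$-geodesic equation \eqref{eq:lambda-geodesic-eq} in the variation parameter. \emph{Then} I would introduce the energy $E(t):=|J(t)|^2+|D_tJ(t)|^2$ and show that $E'(t)\le C E(t)$ with $C=C(M,g,\lambda)$, after which \eqref{eq:jac_est} follows immediately from Grönwall's inequality.

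For the linearization, let $\gamma_s$ be the defining family of $\lambda$-geodesics, so that $J=\partial_s\gamma_s|_{s=0}$ and, by the symmetry $D_s\partial_t=D_t\partial_s$ of the torsion-free connection, the velocity variation is $D_s\dot\gamma_s=D_tJ$. Each $\gamma_s$ is unit speed (so the lift $s\mapsto(\gamma_s,\dot\gamma_s)$ is a curve in $SM$ and $\langle D_tJ,\dot\gamma\rangle=0$), which makes $\partial_s[\lambda(\gamma_s,\dot\gamma_s)]$ a genuine derivative of $\lambda$ on $SM$. Applying $D_s$ to \eqref{eq:lambda-geodesic-eq}, commuting covariant derivatives via $D_sD_t\dot\gamma=D_t^2 J+R(J,\dot\gamma)\dot\gamma$ (with $R$ the Riemann curvature tensor), and using that the rotation $i$ is parallel (so $D_s(i\dot\gamma)=iD_tJ$), I obtain a relation of the form
\[
D_t^2 J+R(J,\dot\gamma)\dot\gamma=\big(\partial_s\lambda\big)\,i\dot\gamma+\lambda\,iD_tJ ,
\]
where $\partial_s\lambda$ is \emph{linear} in the pair $(J,D_tJ)$ with coefficients given by the horizontal and vertical derivatives of $\lambda$. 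On a surface $R(J,\dot\gamma)\dot\gamma=K\big(J-\langle J,\dot\gamma\rangle\dot\gamma\big)$, so the entire right-hand side is bounded by $C(|J|+|D_tJ|)$, where $C$ depends only on $\|K\|_{\infty}$ and the $C^1$-size of $\lambda$ on the compact manifold $SM$; this yields the pointwise bound $|D_t^2J|\le C(|J|+|D_tJ|)$.

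With this bound in hand, differentiating the energy gives $E'(t)=2\langle D_tJ,J\rangle+2\langle D_t^2J,D_tJ\rangle$, and Cauchy--Schwarz together with $|D_t^2J|\le C(|J|+|D_tJ|)$ produces $E'(t)\le C' E(t)$ for a uniform $C'=C'(M,g,\lambda)$. Grönwall's inequality then yields $E(t)\le e^{C'(t-a)}E(a)$, which is exactly \eqref{eq:jac_est} after relabeling the constant. \emph{The main obstacle} is the careful bookkeeping in the linearization of the velocity-dependent term $\lambda(\gamma_s,\dot\gamma_s)$: one must verify that $\partial_s\lambda$ is a bona fide first order linear expression in $(J,D_tJ)$---in particular that the unit-speed normalization is preserved along the variation, so that the vertical derivative of $\lambda$ acts on $D_tJ$ as a tangent vector to $SM$---and that all resulting coefficients are uniformly bounded by compactness of $SM$. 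Once the equation is placed in this normal form, the energy estimate and the Grönwall step are routine.
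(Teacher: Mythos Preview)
Your proposal is correct and follows the same overarching strategy as the paper---reduce to a linear first-order system with uniformly bounded coefficients along $\gamma$ and apply Gr\"onwall to the energy $E(t)=|J(t)|^2+|D_tJ(t)|^2$---but the implementations differ in a useful way. The paper first passes to the unit sphere bundle: it writes $d\phi_t(\xi)=a(t)F+b(t)X_\perp+c(t)V$ in the moving orthonormal frame $\{F,X_\perp,V\}$, derives the explicit first-order system $\dot a=-\lambda b$, $\dot b=-c$, $\dot c=V(\lambda)c+(K_\lambda-FV(\lambda))b$, bounds the coefficient matrix by compactness, and then identifies $|Z|^2=a^2+b^2+c^2$ with $|J|^2+|D_tJ|^2$ via the Sasaki metric and the relation $d\phi_t(\xi)=(J,D_tJ)$. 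Your route stays on $M$: you linearize the $\lambda$-geodesic equation directly to obtain the second-order $\lambda$-Jacobi equation $D_t^2J+R(J,\dot\gamma)\dot\gamma=(\partial_s\lambda)\,i\dot\gamma+\lambda\,iD_tJ$, observe that $\partial_s\lambda$ is linear in $(J,D_tJ)$ with coefficients controlled by $\|\lambda\|_{C^1(SM)}$, and bound $|D_t^2J|$ pointwise. Your argument is more elementary and avoids building the frame machinery; the paper's version has the minor advantage of producing the explicit ODE system that is reused elsewhere (e.g.\ in discussing $\lambda$-conjugate points). The obstacle you flag---that the vertical derivative of $\lambda$ must act on a genuine tangent vector to $SM$---is exactly the point the paper handles by working in the Sasaki splitting; your resolution via the unit-speed constraint $\langle D_tJ,\dot\gamma\rangle=0$ is equivalent.
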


\subsection{Even and odd decomposition with respect to the reflection map}

Given a Riemannian surface $(M,g)$ with smooth boundary. We define the reflection map $\rho: \partial SM \rightarrow \partial SM$ by
\[
\rho(x, v)=\left(x, v-2\langle v, \nu(x)\rangle_g \nu(x)\right).
\] 
We denote by $\rho^*$ the pull back of $\rho.$ The even and odd parts of $u: SM \to \R$ with respect to the reflection map $\rho$ are defined by the formula 
\begin{align}\label{eq:u:even:odd:def}
u_e = \frac{1}{2}(u + u \circ \rho), \qquad u_o = \frac{1}{2}(u - u \circ \rho).
\end{align}
We will next state a simple lemma related to the reflection and rotation maps. We omit presenting a proof.
\begin{lemma}\label{lm:irhoi} Let $(M,g)$ be a Riemannian surface with smooth boundary. The reflection and the rotation maps satisfy
\begin{enumerate}[(i)]
\item  $i^{-1}=-i$; \label{l:1}
\item $\rho^{-1}=\rho$; \label{l:2} 
    \item $i\rho i=\rho$.\label{l:3}
\end{enumerate}
\end{lemma}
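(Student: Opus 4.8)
The plan is to exploit that all three identities are pointwise statements in the fibers, since $i$ and $\rho$ both fix the base point $x$ and act linearly on $v \in T_xM$. I would fix $x \in \partial M$, abbreviate $\nu = \nu(x)$, and work throughout in the positively oriented orthonormal basis $\{\nu, i\nu\}$ of $T_xM$, recalling that $i$ is an orientation-preserving isometry. Parts (i) and (ii) are then immediate, and only part (iii) requires any care.

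For part (i), since $i$ is the rotation by $90^\circ$ counterclockwise, $i^2$ is the rotation by $180^\circ$, i.e.\ $i^2 = -\id$; multiplying by $i^{-1}$ gives $i = -i^{-1}$, hence $i^{-1} = -i$. For part (ii), I would set $w := v - 2\langle v, \nu\rangle_g \nu$ (the fiber part of $\rho(x,v)$) and compute, using $\abs{\nu}_g = 1$, that $\langle w, \nu\rangle_g = \langle v, \nu\rangle_g - 2\langle v,\nu\rangle_g = -\langle v, \nu\rangle_g$. Substituting this into the defining formula once more yields $\rho(x,w) = (x,\, w - 2\langle w, \nu\rangle_g \nu) = (x,v)$, so $\rho \circ \rho = \id$ and therefore $\rho^{-1} = \rho$.

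Part (iii) is the main (if mild) obstacle, being the only identity that mixes the two maps and whose signs must be tracked carefully. Working in the fiber, applying $i$, then $\rho$, then $i$, and using $i^2 = -\id$ from part (i), gives
\[
i\rho i\, v = -v - 2\langle iv, \nu\rangle_g\, i\nu.
\]
The decisive ingredient is the orthogonality relation $\langle iv, \nu\rangle_g = -\langle v, i\nu\rangle_g$, which follows from $i^{\top} = i^{-1} = -i$ (again by part (i)); it converts the right-hand side into $-v + 2\langle v, i\nu\rangle_g\, i\nu$. Expanding $v = \langle v, \nu\rangle_g \nu + \langle v, i\nu\rangle_g\, i\nu$ in the orthonormal basis $\{\nu, i\nu\}$ then shows that this equals $-\langle v, \nu\rangle_g \nu + \langle v, i\nu\rangle_g\, i\nu = v - 2\langle v, \nu\rangle_g \nu = \rho v$, establishing $i\rho i = \rho$. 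Alternatively, one can argue conceptually: the conjugate $i\rho i^{-1}$ is the reflection across the rotated axis $i(T_x\partial M) = \R\nu$, which is exactly $-\rho$, and hence $i\rho i = (i\rho i^{-1})\, i^2 = (-\rho)(-\id) = \rho$.
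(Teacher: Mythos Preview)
Your proof is correct in all three parts. The paper actually omits the proof of this lemma entirely (stating only ``We omit presenting a proof''), so there is no approach to compare against; your direct fiberwise computation in the orthonormal frame $\{\nu, i\nu\}$, together with the key identity $\langle iv,\nu\rangle_g = -\langle v,i\nu\rangle_g$ coming from $i^\top=i^{-1}=-i$, is a clean and complete justification.
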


The boundary operators $\kappa_\lambda$ and $\nu_\lambda$, as defined in \eqref{eq:kappa:lam} and \eqref{eq:eta:lam}, satisfy the following simple identities. These formulas will later on allow us to simplify the boundary terms in \eqref{lm:pestov:g}.
\begin{lemma}\label{lm:property:rho:kappa:eta}
Let $(M,g)$ be Riemannian surface with smooth boundary. Then $\kappa_{\lambda}$ and $\eta_{\lambda}$ satisfy the following properties
\begin{enumerate}[(i)]
    \item $\kappa_{\lambda\circ \rho}=  \kappa_{\lambda}\circ \rho$\label{eq:kappa:circ:1};
    \item $\eta_{\lambda\circ \rho}=  \eta_{\lambda}\circ \rho$\label{eq:eta:circ:1};
    \item $\kappa_{\lambda_e}=\left(\kappa_\lambda\right)_e$ and $\eta_{\lambda_e}=\left(\eta_\lambda\right)_e$\label{eq:kappa:circ:2};
    \item $\rho^*\left(\kappa_{\lambda_e}\right)=\kappa_{\lambda_e}$ and $\rho^*\left(\eta_{\lambda_e}\right)=\eta_{\lambda_e}.$ \label{eq:eta:circ:2}
\end{enumerate}
\end{lemma}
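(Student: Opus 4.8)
The plan is to reduce all four identities to a handful of elementary structural facts about how the fiberwise reflection $\rho$ interacts with the geometric weights appearing in $\kappa_\lambda$ and $\eta_\lambda$, after which each claim follows by short algebra. Throughout I write $\mu(x,v)=\langle v,\nu\rangle$ and $\rho(x,v)=(x,v')$ with $v'=v-2\mu\,\nu$.

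First I would record the behavior of the basic building blocks under $\rho$. Since $\rho$ fixes the base point $x$ and $\kappa$ is a function of $x$ alone, we have $\kappa\circ\rho=\kappa$. Using $|\nu|_g=1$, a direct computation gives $\mu\circ\rho=-\mu$. By contrast, the weight multiplying $\lambda$ in $\kappa_\lambda$, namely $\langle v_{\perp},\nu\rangle=-\langle iv,\nu\rangle$, is $\rho$-invariant: expanding $iv'=iv-2\mu\,i\nu$ and using $\langle i\nu,\nu\rangle=0$ shows $\langle iv',\nu\rangle=\langle iv,\nu\rangle$. The one genuinely geometric input is the interaction of the vertical field $V$ with $\rho$: since $\rho$ acts on each fiber $S_xM$ as a reflection, it reverses the fiber orientation, so $d\rho(V)=-V\circ\rho$, and the chain rule yields $V(\lambda\circ\rho)=-(V\lambda)\circ\rho$. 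I expect this vertical-derivative identity to be the main point requiring care, since the sign must be justified by the orientation-reversing nature of $\rho$ on the circle fibers (for instance by writing $\rho$ in a fiber angular coordinate as $\theta\mapsto c(x)-\theta$, whence $V=\partial_\theta\mapsto-\partial_\theta$).

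With these in hand, I would prove (i) by noting that both $\kappa_{\lambda\circ\rho}(x,v)$ and $(\kappa_\lambda\circ\rho)(x,v)$ equal $\kappa(x)-\langle iv,\nu\rangle\,\lambda(\rho(x,v))$, using $\kappa\circ\rho=\kappa$ and the $\rho$-invariance of the weight. For (ii) I would combine the two sign flips: on the one hand $\eta_{\lambda\circ\rho}=V(\lambda\circ\rho)\,\mu=-\big((V\lambda)\circ\rho\big)\mu$, while on the other $\eta_\lambda\circ\rho=\big((V\lambda)\circ\rho\big)(\mu\circ\rho)=-\big((V\lambda)\circ\rho\big)\mu$, so the two agree.

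Finally, I would exploit that $\lambda\mapsto\kappa_\lambda$ is affine and $\lambda\mapsto\eta_\lambda$ is linear, both through the fixed weights identified above. Hence for (iii), writing $\lambda_e=\tfrac12(\lambda+\lambda\circ\rho)$ and expanding, $\kappa_{\lambda_e}=\kappa+\lambda_e\langle v_{\perp},\nu\rangle=\tfrac12(\kappa_\lambda+\kappa_{\lambda\circ\rho})$, which equals $(\kappa_\lambda)_e$ by part (i); the identity for $\eta$ is the same argument using linearity and part (ii). For (iv) I would use that the even part of any function is $\rho$-invariant: since $\rho^2=\mathrm{Id}$ by Lemma \ref{lm:irhoi}(ii), one has $f_e\circ\rho=f_e$, so combining with (iii) gives $\rho^*(\kappa_{\lambda_e})=(\kappa_\lambda)_e\circ\rho=(\kappa_\lambda)_e=\kappa_{\lambda_e}$, and likewise for $\eta_{\lambda_e}$.
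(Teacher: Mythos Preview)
Your proof is correct and follows essentially the same approach as the paper: both establish $\langle iv',\nu\rangle=\langle iv,\nu\rangle$ via $\langle i\nu,\nu\rangle=0$, use $V(\lambda\circ\rho)=-(V\lambda)\circ\rho$, and then exploit the affine/linear dependence on $\lambda$ together with (i)--(ii) to obtain (iii), with (iv) following from the $\rho$-invariance of even parts. Your presentation front-loads the building-block identities and writes $\eta_\lambda=V(\lambda)\,\mu$ to streamline (ii), while the paper expands the inner products directly at each step, but these are cosmetic differences rather than distinct strategies.
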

\noindent
A proof of Lemma \ref{lm:property:rho:kappa:eta} is given in Appendix \ref{sec:auxiliary-proofs-appendix}.

\section{Transport equation for the broken \texorpdfstring{$\lambda$}{lambda}-geodesics}
\subsection{Broken ray transforms and the transport equation}
Let $(M,g)$ be an orientable, compact Riemannian surface with smooth boundary and $\lambda \in C^\infty(SM)$.
We assume that $\partial M$ can be decomposed to the union of two disjoint relatively open disjoint subsets $\mathcal{E}$ and $\mathcal{R}$. In particular, one could think of $M$ as $M=\hat{M} \setminus O$, where $\hat{M}$ is a larger manifold containing $M$ and $O$ being an open obstacle. In this case, $\mathcal{E}=\partial \hat{M}$ and $\mathcal{R}=\partial O$ are the outer and inner boundaries of $M$. We call $\mathcal{E}$ the $\emph{emitter}$ and $\mathcal{R} $ the $\emph{reflector}$ 
of $M.$ 
 In Section \ref{subsec:dualflows}, we denote by $\phi_t$ the usual $\lambda$-geodesic flow and by $\phi_t^{-}$ its dual flow. By abuse of notation, we now continue to write the same notation  $\phi_t$ for the broken $\lambda$-geodesic flow and $\phi_t^{-}$ to its dual flow. For any $(x,v) \in SM$, we define the \emph{forward} and \emph{dual travel times}  by \begin{equation}
    \tau_{x,v} := \inf\{\,t\geq0 \,;\,\phi_t(x,v) \in \mathcal{E}\,\}, \quad \tau_{x,v}^{-} := \inf\{\,t\geq0 \,;\,\phi_t^{-}(x,v)\in \mathcal{E}\,\}.
\end{equation}
We note that for a typical $\lambda \in C^\infty(SM)$ it actually holds that $\tau_{x,v} \neq \tau_{x,v}^{-}$ for most of $(x,v) \in SM$ since the twisted geodesic flows are not reversible. On the other hand, the maximal domain of $\gamma_{x,v}$ is $[-\tau_{x,-v}^-,\tau_{x,v}]$, and that of $\gamma_{x,-v}^{-}$ is $[-\tau_{x,v},\tau_{x,-v}^-]$.
Let $\pi:SM\to M$ be a projection map so that $\pi(x,v)=x$.
We define \begin{equation}\pi^{-1}\E:=\{(x,v): x\in \E, \ v\in S_xM\},\quad \pi^{-1}\rR:=\{(x,v): x\in \rR, \ v\in S_xM\}.\end{equation}
Let us denote by $\rho: \pi^{-1}\rR \to \pi^{-1}\rR$ the \emph{reflection map} and define by the law of reflection
\begin{equation}
   \rho(x,v) := \left(x,v-2\ip{v}{\nu(x)}_g\nu(x)\right).
\end{equation}
Note that $\rho$ is an involution in the sense that $\rho \circ \rho = \id$. Here and subsequently, $\gamma(t_0-)$ stands for the left-hand limit of $\gamma$ at some point $t_0$ and $\gamma(t_0+)$ denotes the right-hand limit of $\gamma$ at $t_0$, which are defined by $\gamma(t_0-)=\lim_{t\to t_0^-}\gamma(t)$ and $\gamma(t_0+)=\lim_{t\to t_0^+}\gamma(t)$. Similarly, $\dot{\gamma}(t_0-)=\lim_{t\to t_0^-}\dot{\gamma}(t)$ and $\dot{\gamma}(t_0+)=\lim_{t\to t_0^+}\dot{\gamma}(t)$.
\begin{definition} Let $(M,g)$ be a Riemannian surface with smooth boundary and $\lambda \in C^\infty(SM)$. Assume that $\partial M = \E \cup \rR$ where $\E$ and $\rR$ are  two relatively open disjoint subsets. 
A curve $\gamma$ on $M$ is called a \emph{broken $\lambda$-ray} if $\gamma$ is a $\lambda$-geodesic in $\inte(M)$ and reflects on $\mathcal{R}$ according to the law of reflection \begin{equation}\rho(\gamma(t_0-),\dot{\gamma}(t_0-))=(\gamma(t_0+),\dot{\gamma}(t_0+))\end{equation} whenever there is a reflection at $\gamma(t_0) \in \mathcal{R}$.
\end{definition}
The broken $\lambda$-ray transform of $f\in C^2(SM)$ is defined by
\begin{equation}\label{eq:integral_equation}
If(x,v)=\int_0^{\tau_{x,v}}f(\phi_t(x,v))dt, \ \ \ (x,v) \in \pi^{-1}\E,
\end{equation}
where $\phi_t(x,v) = (\gamma_{x,v}(t), \dot\gamma_{x,v}(t))$ is the broken $\lambda$-geodesic flow.
We next move towards studying the injectivity of the broken $\lambda$-ray transform, that is, is it possible to determine an unknown $f$ from the knowledge of its integrals \eqref{eq:integral_equation} over maximal broken $\lambda$-rays? To proceed, we first reduce the integral equation \eqref{eq:integral_equation} to a certain partial differential equation. Given any $f\in C^2(SM)$, define
\begin{equation}\label{eq:u:def}
u(x,v) :=\int_0^{\tau_{x,v}}f(\phi_t(x,v))dt, \ \ \ (x,v) \in SM.
\end{equation}
Notice that the exit time $\tau_{x,v}$ is smooth near  $(x,v) \in SM$ whenever the broken ray $\gamma_{x,v}$ reflects and exits transversely. 
A simple application of the fundamental theorem of calculus together with the regularity properties of exit time, we deduce from \eqref{eq:u:def} that $u$ satisfies the transport equation 
\begin{align}\label{eq:trans:1}
    \begin{split}
         \begin{cases}
(X+\lambda V)u=-f, & \text{ in } \operatorname{int}(SM), \\
u|_{\pi^{-1}\E}=If, & u|_{\pi^{-1}\rR}= u\circ \rho |_{\pi^{-1}\rR}.
\end{cases}
    \end{split}
\end{align}
We need to make some assumption on the geometry of $M$ and its reflecting boundary parts to make the injectivity problem more approachable.
\begin{remark}
   If $\lambda\circ \rho=\lambda$ on $\rR$, then we have 
   \begin{align*}
\left(\kappa_\lambda\right)_o+\left(\eta_\lambda\right)_o=0,  \end{align*}
where $\left(\kappa_\lambda\right)_o$ and $\left(\eta_\lambda\right)_o$ are odd parts of the functions $\kappa_{\lambda}$ and $\eta_{\lambda}$ respectively. This assumption holds, for example, when $\lambda$ is independent of the direction $v$ on $\pi^{-1}\rR$.
\end{remark}
\begin{definition}\label{def:lambda:adm:r}
Let $(M, g)$ be a Riemannian surface with smooth boundary and $\lambda \in C^{\infty}(SM)$. We say that a relatively open subset $\rR \subset \partial M$ of the boundary has an \emph{admissible $\lambda$-curvature} if the following inequality holds:
\begin{equation}
    (\kappa_{\lambda})_e(x,v) + (\eta_{\lambda})_e(x,v) \le 0 \quad \text{ for all } (x,v) \in \pi^{-1}\rR.
\end{equation}
\end{definition}
\noindent If $V(\lambda)$ vanishes on $\pi^{-1}\rR$, i.e. $\lambda$ is only a function of the base point on $\rR$, and $\rR$ is strictly $\lambda$-concave at any $x \in \rR$, then $\rR$ has an admissible $\lambda$-curvature.

From Corollary \ref{cor:dual-curvatures} and Remark \ref{rmk:eta-dual-law}, we have 
\begin{align*}
    (\kappa_{\lambda^{-}})_e(x, v)&+(\eta_{\lambda^{-}})_e(x, v)=\kappa_{\lambda_e^{-}}(x, v)+\eta_{\lambda_e^{-}}(x, v)\\&=\kappa_{\lambda_e}(x, -v)+\eta_{\lambda_e}(x, -v)=(\kappa_{\lambda})_e(x, -v)+(\eta_{\lambda})_e(x, -v),
\end{align*}
i.e., an obstacle $\rR$ has admissible $\lambda$-curvature if and only if $\rR$ has admissible $\lambda^{-}$-curvature.  
If $V(\lambda_e)|_{\rR}=0$, then the condition that the obstacle $\rR$ has admissible $\lambda$-curvature is equivalent to the strict $\lambda$-concavity of $\rR$.

In \cite[Theorem 1]{Ilmavirta:Salo:2016}, it was proved that one can recover an unknown function $f$ from the knowledge of its geodesic broken ray transform $If$. They assumed that the surface is nontrapping, having nonpositive Gaussian curvature, the reflecting part is strictly concave, and the broken rays allow at most one reflection with $|\langle\dot{\gamma},\nu\rangle|<a$. See also  \cite[Definition 1]{Ilmavirta:Paternain:2022} for similar assumptions used to study the broken ray transforms in three and higher dimensions. We now define a similar class of admissible Riemannian surfaces with broken $\lambda$-geodesic flows.
\begin{definition}\label{def:admis:lam:brok} 
Let $(M,g)$ be a compact Riemannian surface with smooth boundary and $\lambda \in C^\infty(SM)$. Assume that $\partial M = \E \cup \rR$ where $\E$ and $\rR$ are two relatively open disjoint subsets.
We call a dynamical system $(M,g,\lambda,\mathcal{E})$ \emph{admissible} if
\begin{enumerate}[(1)]
\item the emitter $\mathcal{E}$ is strictly $\lambda$-convex in the sense of  Definition \ref{def:lam_convex};
\item the obstacle $\mathcal{R}$ has admissible $\lambda$-curvature in the sense of Definition \ref{def:lambda:adm:r};
\item the $\lambda$-curvature $K_{\lambda}$ of $(M,g,\lambda)$ is nonpositive;
\item \label{eq:nontrapping}the broken $\lambda$-geodesic flow is nontrapping: there exists $L > 0$ such that $\tau_{x,v}, \tau_{x,v}^{-} < L$ for any $(x,v) \in SM$;
\item there exists $a > 0$ such that every broken $\lambda$-ray $\gamma$ has at most one reflection at $\mathcal{R}$ with $\abs{\ip{\nu}{\dot{\gamma}}}<a$.
\end{enumerate}
\end{definition}

\subsection{Dual transport equation}\label{sec:dual-transport-equation}
Let us define the \emph{dual transport equation} of \eqref{eq:trans:1} as
\begin{align}\label{eq: dual transport eq}
    \begin{split}
         \begin{cases}
 (X+\lambda^{-}V)u=-\tilde{f}, & \text{ in } \operatorname{int}(SM), \\
u|_{\pi^{-1}\E}=I^-\tilde{f}, & u|_{\pi^{-1}\rR}= u\circ \rho |_{\pi^{-1}\rR},
\end{cases}
    \end{split}
\end{align}
where $\tilde{f}(x,v) :=f(x,-v)$ and $I^-$ denote the broken ray transform related to the $\lambda^-$-geodesic flow. For nontrapping broken $\lambda$-geodesic flows, we define the \emph{scattering relation} $\alpha: \partial SM \rightarrow \partial SM$ by
  \begin{align*}
      \alpha(x,v):=\phi_{\tau_{x,v}}(x,v).
  \end{align*}
  
\begin{lemma}\label{dual_tr_lem}
    Let $(M,g)$ be a compact nontrapping Riemannian surface with smooth boundary and $\lambda \in C^\infty(SM)$. If $f \in C^2(SM)$ and $If=0$, then \begin{equation} \label{eq: transport-dual-relation} u(x,v) = -u^{-}(x,-v)\end{equation}
where $u$ is the unique solution of the transport equation \eqref{eq:trans:1} and $u^{-}$ is the unique solution of the dual transport equation \eqref{eq: dual transport eq}.
\end{lemma}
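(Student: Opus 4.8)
The plan is to work directly with the integral representations of the two solutions and to exploit the time-reversibility identity $\gamma_{x,-v}^{-}(t)=\gamma_{x,v}(-t)$ relating the $\lambda$- and $\lambda^{-}$-flows from Section \ref{subsec:dualflows}. Since both boundary value problems \eqref{eq:trans:1} and \eqref{eq: dual transport eq} are uniquely solvable, with solutions given by \eqref{eq:u:def} and its $\lambda^{-}$-analogue, it suffices to verify the pointwise identity \eqref{eq: transport-dual-relation} for these explicit integrals, for every $(x,v)\in SM$.

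First I would record the flow-level form of time reversibility for the \emph{broken} flow. Writing $r(x,v)=(x,-v)$ for the reversion map, the relation $\gamma_{x,-v}^{-}(t)=\gamma_{x,v}(-t)$ gives $\phi_t^{-}(x,-v)=r(\phi_{-t}(x,v))$, and this persists across reflections: since $\rho$ is an involution, reversing a trajectory that reflects at $\rR$ again obeys the law of reflection, as one checks from $\dot\gamma(t_0-)=\dot\gamma(t_0+)-2\langle\dot\gamma(t_0+),\nu\rangle\nu$. Together with $\tilde f\circ r=f$ this yields the integrand identity $\tilde f(\phi_t^{-}(x,-v))=f(\phi_{-t}(x,v))$.

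Next I would change variables in the dual integral. The maximal domain of $\gamma_{x,v}$ is $[-\tau_{x,-v}^{-},\tau_{x,v}]$, so with the substitution $s=-t$,
\[
u^{-}(x,-v)=\int_0^{\tau_{x,-v}^{-}}\tilde f(\phi_t^{-}(x,-v))\,dt=\int_{-\tau_{x,-v}^{-}}^{0}f(\phi_s(x,v))\,ds .
\]
Adding $u(x,v)=\int_0^{\tau_{x,v}}f(\phi_s(x,v))\,ds$ collapses the two pieces into a single integral of $f$ over the whole maximal broken ray through $(x,v)$, running from its entry point on $\E$ to its exit point on $\E$.

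Finally I would identify this total integral with the broken $\lambda$-ray transform at the entry point. Setting $(y,w):=\phi_{-\tau_{x,-v}^{-}}(x,v)\in\pi^{-1}\E$ and using the flow property $\phi_r(y,w)=\phi_{r-\tau_{x,-v}^{-}}(x,v)$ with $\tau_{y,w}=\tau_{x,-v}^{-}+\tau_{x,v}$, the substitution $r=s+\tau_{x,-v}^{-}$ gives
\[
u(x,v)+u^{-}(x,-v)=\int_{-\tau_{x,-v}^{-}}^{\tau_{x,v}}f(\phi_s(x,v))\,ds=\int_0^{\tau_{y,w}}f(\phi_r(y,w))\,dr=If(y,w)=0,
\]
the last equality being the hypothesis $If=0$; this is precisely \eqref{eq: transport-dual-relation}. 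The step demanding the most care is the second one: one must verify that the conjugacy $\phi_t^{-}(x,-v)=r(\phi_{-t}(x,v))$ and the matching of the dual exit time $\tau_{x,-v}^{-}$ with the backward exit time of the forward broken flow survive the reflections on $\rR$, so that the two half-integrals concatenate into the integral over one genuine maximal broken $\lambda$-ray.
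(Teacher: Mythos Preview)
Your proposal is correct and follows essentially the same approach as the paper: both arguments use the time-reversibility $\phi_t^{-}(x,-v)=r(\phi_{-t}(x,v))$ to recognize that $u(x,v)+u^{-}(x,-v)$ is the integral of $f$ over the full maximal broken $\lambda$-ray through $(x,v)$, which vanishes since $If=0$. Your presentation is slightly more explicit about the change of variables $s=-t$ and the identification of the combined integral with $If$ at the entry point $(y,w)\in\pi^{-1}\E$, whereas the paper writes the vanishing identity \eqref{eq: If=0 property} directly and then matches the second summand with $u^{-}(x,-v)$ via \eqref{eq: u-minus from def}; the underlying logic is the same.
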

\begin{proof}
 Let us consider $(x, v) \in SM$.
Now the union of curves $\gamma_{x, v}$ and $\gamma^{-}_{x, -v}$ form a maximal broken $\lambda$-geodesic (cf. Section \ref{subsec:dualflows}) with its endpoints on $\partial M$. By assumption, $If=0$, we can deduce that
 \begin{align}\label{eq: If=0 property}
     0&=\int_0^{\tau_{x,v}} f(\gamma_{x,v}(t),\dot\gamma_{x,v}(t) )dt+\int_0^{\tau_{x,-v}^-} f(\gamma^-_{x,-v}(t),-\dot\gamma^-_{x,-v}(t) )dt.
 \end{align}
By the definition $u^{-}$ is the unique solution of \eqref{eq: dual transport eq}. Note that \[If(x,v)=I^-\tilde{f}(r\circ \alpha)(x,v)\]
where $\alpha$ is the scattering relation and $r$ is the reversion map. This implies that $u^-|_{\pi^{-1}\E}=I^{-} \tilde{f}=0$.
Since $X+\lambda^{-}V$ is the generator of $\lambda^{-}$-geodesic flow  $\phi^{-}$, we get that
\begin{equation}\label{eq: u-minus from def}
    u^{-}(x,v) = \int_0^{\tau_{x,v}^{-}} \tilde{f}(\gamma_{x,v}^{-}(t),\dot{\gamma}_{x,v}^{-}(t))dt = \int_0^{\tau_{x,v}^{-}} f(\gamma_{x,v}^{-}(t),-\dot{\gamma}_{x,v}^{-}(t))dt.
\end{equation}
The formulas \eqref{eq: If=0 property} and \eqref{eq: u-minus from def} imply that 
\begin{equation}
    u(x,v) + u^{-}(x,-v) = 0,
\end{equation}
which completes the proof.\qedhere
\end{proof}

\begin{remark} \label{rmk:dual_tr_lem}
 Lemma \ref{dual_tr_lem} also clearly holds in the setting of admissible dynamical systems $(M,g,\lambda,\mathcal{E})$.
 \end{remark}
\subsection{Jacobi fields for broken \texorpdfstring{$\lambda$}{lambda}-geodesics}
In this subsection, we give a brief exposition of Jacobi fields along broken $\lambda$-geodesic flows. In Lemma \ref{lm:converse:jacobi}, we show that Jacobi fields along a $\lambda$-geodesic flow can be realized as the variation field of a unit speed $\lambda$-geodesic variation of $\gamma$. Here, we will generalize similar properties of Jacobi fields to the case of broken $\lambda$-geodesic flows. The crucial point is to understand how the Jacobi fields behave at reflection points. The Jacobi fields along broken rays have been studied in the case of $\lambda =0$ in \cite[Section 5]{Ilmavirta:Salo:2016} and we follow some of the techniques from this article.
Let $x_0\in\partial M$ and $\nu$ be the inward unit normal to it. We define a map $\Phi_{\zeta}: T_{x_{0}} M \rightarrow T_{x_{0}} M$ by setting
\[
\Phi_{\zeta} \xi=2\left(\left\langle\nabla_{\varphi_{\zeta} \xi} \nu, \zeta\right\rangle \nu+\langle\nu, \zeta\rangle \nabla_{\varphi_{\zeta} \xi} \nu\right), 
\]
for any vector $\zeta \in T_{x_{0}} M$ that is not orthogonal to $\nu$, where the map   $\varphi_{\zeta}: T_{x_{0}} M \rightarrow T_{x_{0}} M$ 
is defined by
\[\varphi_{\zeta} \xi=\xi-\frac{\langle\xi, \nu\rangle}{\langle\zeta, \nu\rangle} \zeta.
\]
Since $\varphi_{\zeta} \xi \perp \nu$, the derivative $\nabla_{\varphi_{\zeta} \xi} \nu$ is well defined. To analyze Jacobi fields at reflection points, we first make an assumption on $\lambda$ on the reflected part of the boundary. In particular, we require a condition on $\lambda$ such that
\begin{equation}
\label{eq:beta-reflection-condition}
\lambda \circ \rho = \beta \lambda \quad \text{on} \quad \rR,
\end{equation}
where $\beta \in L^{\infty}\left(\pi^{-1} \rR\right)$. We refer to this condition as \textit{the $\beta$-reflection condition on $\lambda$}. Taking $(x,v)\to \rho(x,v)$, we have
\[
\lambda\circ \rho (\rho(x,v)) =\beta (\rho(x,v) ) \lambda(\rho(x,v))
      \]
      and
      \[
       \lambda (x,v)=\beta (\rho(x,v) ) \lambda(\rho(x,v)),
      \]
   which gives the condition on $\beta$ that $1=\beta (x,v)\beta (\rho(x,v) )$. 
For example, if we consider $Z=\{(x,v):\lambda(x,v)=0\}=\{(x,v):\lambda\circ \rho(x,v)=0\}$, then we have a $\beta$ given as follows
   \begin{align*}
       \beta(x,v)= \begin{cases}
\frac{\lambda\circ \rho(x,v)}{\lambda(x,v)} & \text{ in }\pi^{-1}\rR \setminus  Z \\
1 & \text{ in }  Z. 
\end{cases}
   \end{align*}

   However, at the end of this section, we are able to establish suitable growth estimates for $\lambda$-Jacobi fields without this additional $\beta$-reflection condition. The benefit of the $\beta$-reflection condition is that it allows one to write down a clean reflection condition for the broken Jacobi fields.
\begin{definition}
 Let $(M,g)$ be a compact Riemannian surface with smooth boundary and $\lambda \in C^\infty(SM)$. Assume that $\partial M=\mathcal{E} \cup \mathcal{R}$ where $\mathcal{E}$ and $\mathcal{R}$ are  two relatively open disjoint subsets and $\lambda \circ \rho|_{\pi^{-1}\rR}=\beta \lambda|_{\pi^{-1}\rR}$ for some $\beta\in L^{\infty}(\pi^{-1}\rR)$. Let $\gamma$ be a broken $\lambda$-ray without tangential reflections. Then a vector field $J$ along $\gamma$ is a \emph{Jacobi field along $\gamma$} if 
\begin{enumerate}[(i)]
\item $J$ is a $\lambda$-Jacobi field along the segments of $\lambda$-geodesic $\gamma$ in $\inte(SM)$ in the usual sense (cf. Section \ref{sec: lambda-jacobi-fields});
\item if $\gamma$ has a reflection at $\gamma\left(t_{0}\right) \in \rR$, then the left and right limits of $J$ at $t_{0}$ are related via
\begin{equation}\label{eq:JS:9}
    \begin{split}
        &J\left(t_{0}+\right)=\rho J\left(t_{0}-\right), \ \text{and}
        \\
& D_{t} J\left(t_{0}+\right)=\rho D_{t} J(t_0-)-\Phi_{\dot{\gamma}(t_0-)}  J(t_0-)\\
&\qquad\qquad\qquad -\left(\beta\left(\gamma\left(t_0-\right), \dot{\gamma}\left(t_0-\right)\right)+1\right)\frac{\left\langle J\left(t_0-\right), \nu\right\rangle}{\left\langle I\left(t_0-\right), \nu\right\rangle} \rho  D_t I(t_0-),
    \end{split}
\end{equation}
where $I(t)=\dot{\gamma}(t)$.
\end{enumerate}
\end{definition}
It is clear that if $(\lambda \circ \rho)=\beta \lambda$ on $\pi^{-1} \mathcal{R}$, then we have  $(\lambda^- \circ \rho)=\frac{1}{\beta} \lambda^-$ on $\pi^{-1} \mathcal{R}$ and hence the identity \eqref{eq:JS:9} 
 is equivalent to 
\[
\begin{split}
   & J\left(t_{0}-\right)=\rho J\left(t_{0}+\right),  \ \text{and}
 \\
 & D_{t} J\left(t_{0}-\right)=\rho D_{t} J(t_0+)-\Phi_{\dot{\gamma}(t_0+)}  J(t_0+) \\
 &\qquad\qquad\qquad  -\left(\frac{1}{\beta\left(\gamma\left(t_0+\right), \dot{\gamma}\left(t_0+\right)\right)}+1\right)\frac{\left\langle J\left(t_0+\right), \nu\right\rangle}{\left\langle I\left(t_0+\right), \nu\right\rangle} \rho  D_t I(t_0+).
\end{split}
\]
\begin{remark}In the case of usual geodesics it holds that $D_tI=D_t\dot{\gamma}=0$.
\end{remark}

In \cite[Lemma 12]{Ilmavirta:Salo:2016}, it has been pointed out that if none of the broken geodesic rays $\gamma_{s}$ have tangential reflections, then $J(t)=\left.\partial_s\gamma_{s}(t)\right|_{s=0}$ is a Jacobi field along $\gamma_0$, where $\gamma_s$ are the variations of $\gamma_0$. Conversely, any Jacobi field can be understood as a variation of the broken geodesic $\gamma_0.$ We can now state an analogue of \cite[Lemma 12]{Ilmavirta:Salo:2016} in the case of broken $\lambda$-rays, where $\lambda\in C^{\infty}(SM)$. 
\begin{lemma}\label{lm:jac:beta:int:1}
Let $(M,g)$ be a compact Riemannian surface with smooth boundary and $\lambda \in C^\infty(SM)$. Assume that $\partial M=\mathcal{E} \cup \mathcal{R}$ where $\mathcal{E}$ and $\mathcal{R}$ are  two relatively open disjoint subsets. Let $\gamma_{s}:[0, L] \rightarrow M$ be the broken $\lambda$-ray starting at $\left(x_{s}, v_{s}\right)$ where the parametrization $(-\varepsilon, \varepsilon) \ni s \mapsto (x_s, v_s) \in \operatorname{int} SM$ is a smooth map. If the broken $\lambda$-rays $\gamma_s$ do not have tangential reflections and $  (\lambda \circ \rho)=\beta\lambda$ on $\pi^{-1}\rR$ where $\beta,\frac{1}{\beta}\in L^{\infty}(\pi^{-1}\rR)$, then 
\[
J(t)=\left.\partial_s\gamma_{s}(t)\right|_{s=0}
\]
is a Jacobi field along the broken $\lambda$-ray $\gamma_{0}$. 
\end{lemma}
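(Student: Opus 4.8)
The plan is to verify the two defining properties of a broken Jacobi field separately, reducing everything to the smooth $\lambda$-geodesic segments, where the standard variation theory applies, and then matching the one-sided limits at each reflection point. Since the broken rays $\gamma_s$ have no tangential reflections, every reflection of $\gamma_0$ is transversal, so the condition $\gamma_s(t)\in\rR$ defines the reflection instants implicitly; by the implicit function theorem these are given by finitely many smooth functions $t_0(s)$, all transversal and isolated for small $s$. It thus suffices to treat one reflection at $t_0 := t_0(0)$, and I set $a := t_0'(0)$. On each open segment between consecutive reflection instants the family $\gamma_s$ is a genuine smooth one-parameter family of $\lambda$-geodesics, so by Lemma \ref{lm:converse:jacobi} the field $J(t)=\partial_s\gamma_s(t)|_{s=0}$ is a $\lambda$-Jacobi field there, which is property (i). To analyze the reflection I extend the incoming and outgoing $\lambda$-geodesic pieces smoothly past $t_0$, producing two smooth $\lambda$-geodesic variations whose variation fields $J^-,J^+$ are $\lambda$-Jacobi fields near $t_0$ with $J^-(t_0)=J(t_0-)$, $J^+(t_0)=J(t_0+)$ and likewise for their covariant derivatives.

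For the position condition I use the reflection point $p(s)=\gamma_s(t_0(s))\in\rR$. Differentiating in $s$ via the chain rule from each side gives $p'(0)=J(t_0-)+a\,\dot\gamma_0(t_0-)$ and $p'(0)=J(t_0+)+a\,\dot\gamma_0(t_0+)$. Since $p(s)\in\rR$ we have $\langle p'(0),\nu\rangle=0$, which forces $a\langle\dot\gamma_0(t_0-),\nu\rangle=-\langle J(t_0-),\nu\rangle$ and hence $p'(0)=\varphi_{\dot\gamma(t_0-)}J(t_0-)$. Subtracting the two expressions and inserting the velocity law $\dot\gamma_0(t_0+)=\rho\,\dot\gamma_0(t_0-)$ yields $J(t_0+)-J(t_0-)=2a\langle\dot\gamma_0(t_0-),\nu\rangle\nu=-2\langle J(t_0-),\nu\rangle\nu$, i.e. $J(t_0+)=\rho J(t_0-)$, the first identity in \eqref{eq:JS:9}.

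For the derivative condition I differentiate the velocity reflection law $w^+(s)=w^-(s)-2\langle w^-(s),\nu(p(s))\rangle\nu(p(s))$ covariantly in $s$ along $p(s)$, where $w^\pm(s)=\dot\gamma_s(t_0(s)\pm)$. Using the symmetry $\tfrac{D}{ds}\partial_t=\tfrac{D}{dt}\partial_s$ and the chain rule, $\tfrac{D}{ds}w^\pm|_{0}=D_tJ(t_0\pm)+a\,D_tI(t_0\pm)$, where by \eqref{eq:lambda-geodesic-eq} one has $D_tI(t_0\pm)=\lambda(\gamma_0(t_0\pm),\dot\gamma_0(t_0\pm))\,i\dot\gamma_0(t_0\pm)$. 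The $\nabla\nu$ terms coming from $\nu(p(s))$ assemble, using $p'(0)=\varphi_{\dot\gamma(t_0-)}J(t_0-)$ and the definition of $\Phi_\zeta$, into exactly $-\Phi_{\dot\gamma(t_0-)}J(t_0-)$, while the pieces $D_tJ(t_0-)-2\langle D_tJ(t_0-),\nu\rangle\nu$ collapse to $\rho\,D_tJ(t_0-)$. The remaining force terms $a\,D_tI(t_0-)-a\,D_tI(t_0+)-2a\langle D_tI(t_0-),\nu\rangle\nu$ are treated with the $\beta$-reflection condition $\lambda\circ\rho=\beta\lambda$ and the identities $i\rho i=\rho$, $i^{-1}=-i$ from Lemma \ref{lm:irhoi}, which give $D_tI(t_0+)=-\beta\,\rho\,D_tI(t_0-)$ with $\beta=\beta(\gamma(t_0-),\dot\gamma(t_0-))$; hence these force terms reduce to $a(1+\beta)\rho\,D_tI(t_0-)=-(1+\beta)\tfrac{\langle J(t_0-),\nu\rangle}{\langle I(t_0-),\nu\rangle}\rho\,D_tI(t_0-)$. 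Collecting the three contributions gives the second identity in \eqref{eq:JS:9}.

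The main obstacle is this last step: the covariant bookkeeping must keep the moving reflection time $t_0(s)$ and the moving conormal $\nu(p(s))$ consistent, and it must cleanly isolate the genuinely new non-reversible contribution (the factor $1+\beta$) from the second-fundamental-form contribution $\Phi$. The two identities that make the separation transparent are $p'(0)=\varphi_{\dot\gamma(t_0-)}J(t_0-)$ and $\lambda(\gamma_0(t_0+),\dot\gamma_0(t_0+))=\beta\,\lambda(\gamma_0(t_0-),\dot\gamma_0(t_0-))$; the crucial cancellation is that $a\,D_tI(t_0-)$ combines with $-2a\langle D_tI(t_0-),\nu\rangle\nu$ to form $a\,\rho\,D_tI(t_0-)$. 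Finally, at reflection points where $\lambda$ vanishes both force terms vanish and \eqref{eq:JS:9} holds trivially, so the hypothesis $\beta,1/\beta\in L^\infty(\pi^{-1}\rR)$ serves only to make $\beta$ well defined and bounded so that this reflection relation is meaningful.
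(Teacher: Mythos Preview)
Your proof is correct and reaches the same reflection identity \eqref{eq:JS:9} as the paper, but by a genuinely different route. The paper first treats the tangential variation $\gamma_s(t)=\gamma(t+s)$ to verify the identity for $I=\dot\gamma$, then subtracts a multiple of $I$ to form $\tilde J=J-\frac{\langle J(t_0-),\nu\rangle}{\langle I(t_0-),\nu\rangle}I$ with $\tilde J(t_0-)\perp\nu$; it next makes a second-order perturbation of the initial data so that every $\gamma_s$ reflects at the \emph{same} time $t_0$, which renders $\gamma_s(t_0)$ smooth in $s$ and allows differentiation of $\rho_s u_s$ in $s$ without any chain-rule terms. You instead keep the variable reflection time $t_0(s)$, compute $a=t_0'(0)=-\langle J(t_0-),\nu\rangle/\langle I(t_0-),\nu\rangle$ directly from $\langle p'(0),\nu\rangle=0$, and carry the resulting $a\,D_tI(t_0\pm)$ terms through the covariant differentiation of the reflection law along $p(s)$; the $\beta$-relation $D_tI(t_0+)=-\beta\,\rho\,D_tI(t_0-)$ then folds these into the $(1+\beta)$-term at the end. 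Your approach is more direct and avoids the somewhat delicate second-order perturbation; the paper's decomposition $J=\tilde J+cI$ has the advantage of cleanly separating the second-fundamental-form contribution (carried by $\tilde J$) from the non-reversible force contribution (carried by $I$), a separation that is reused in the later growth estimates of Corollary~\ref{cor:final:jac:est}. One minor point: the segmentwise Jacobi property follows from Lemma~\ref{lm:lambda:jac:var} (variation field $\Rightarrow$ Jacobi field), not from its converse Lemma~\ref{lm:converse:jacobi} as you cite.
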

\begin{proof}
   By Lemma \ref{lm:lambda:jac:var}, it follows that $J$ satisfies the $\lambda$-Jacobi equation on each $\lambda$-geodesic segment between the reflection points. Therefore, it is sufficient to prove that $J$ satisfies \eqref{eq:JS:9} at the reflection points. 
Let $\gamma := \gamma_0$ be a broken $\lambda$-ray that does not have a tangential reflection at the reflection point $t = t_0$. After possibly shrinking the domain of definition, we may assume  that $t = t_0$ is the only reflection point of $\gamma_0$ and each of the broken rays $\gamma_s$ have at most one reflection.

We begin by proving the lemma for a special family of curves corresponding to the tangential Jacobi fields. Let us consider a family of broken $\lambda$-geodesics $\gamma_s(t)=\gamma(t+s)$ with the starting point and velocity given by $(\gamma_s(0),\dot\gamma_s(0))=(x_s,v_s)$. We denote $I$ by the variation field corresponding to $\gamma_s$. Notice that
\[I(t)=\left.\partial_s\gamma_s(t)\right|_{s=0}=\dot\gamma(t)\] is a $\lambda$-Jacobi field except at the reflection point, see Lemma \ref{lm:lambda:jac:var}. 

We now analyze the behavior of $I$ at reflection point $t_0$. By the definition of the broken $\lambda$-ray, we have $I\left(t_{0}+\right)=\rho I\left(t_{0}-\right)$. Since $\gamma_s$ satisfies the $\lambda$-geodesic equation, we have $D_{t} I(t)=\lambda(\gamma(t),\dot\gamma(t))i\dot\gamma(t)$ outside the reflection point, and this leads to
\begin{align*}
    D_{t} I\left(t_{0}+\right)&=\lambda(\gamma(t_{0}+),\dot\gamma(t_{0}+))i\dot{\gamma}\left(t_0+\right)\\
    &=\lambda\circ \rho(\gamma(t_{0}-),\dot\gamma(t_{0}-))i\rho\dot{\gamma}\left(t_0-\right)\\
    &=\beta(\gamma(t_{0}-),\dot\gamma(t_{0}-) )\lambda( \gamma(t_{0}-),\dot\gamma(t_{0}-))i\rho\dot{\gamma}\left(t_0-\right).
\end{align*}
Applying Lemma \ref{lm:irhoi} to the above identity, we obtain
\begin{align*}
     D_{t} I\left(t_{0}+\right)&=-\beta(\gamma(t_{0}-),\dot\gamma(t_{0}-) )\lambda( \gamma(t_{0}-),\dot\gamma(t_{0}-))\rho i \dot{\gamma}\left(t_0-\right)\\&=-\beta(\gamma(t_{0}-),\dot\gamma(t_{0}-) )\rho D_tI(t_0-). 
\end{align*}
Hence the vector field $I$ satisfies \eqref{eq:JS:9}. 

Now it remains to prove the lemma in the case of general variations of broken $\lambda$-rays. Let $J$ denote the vector field along $\gamma$, as given in the statement of the lemma. In view of the proof of \cite[Lemma 12]{Ilmavirta:Salo:2016}, we define another vector field $\tilde{J}$ along $\gamma$ by setting
\[
\tilde{J}(t) = J(t) - \frac{\left\langle J\left(t_0-\right), \nu\right\rangle}{\left\langle I\left(t_0-\right), \nu\right\rangle} I(t). 
\] 
Since $\gamma(t)$ does not have a tangential reflection, we see that $\left\langle I\left(t_0-\right), \nu\right\rangle= \left\langle \dot \gamma\left(t_0-\right), \nu\right\rangle\neq 0$.

Note that $I(t)$ and $J(t)$ satisfy the $\lambda$-Jacobi equation except at the point of reflection, see Lemma \ref{lm:lambda:jac:var}. Therefore, by the linearity of the $\lambda$-Jacobi equation, the vector field $\tilde J$ must satisfy the $\lambda$-Jacobi equation except at the reflection points. Similar to Lemma \ref{lm:converse:jacobi}, there exists a corresponding family of broken $\lambda$-geodesic variations associated with $\tilde J$, say $\tilde{J}(t) = \partial_s\gamma_{s}(t)|_{s=0} = \partial_s\gamma_{x_s,v_s}(t)|_{s=0}$. One can make a change of order $s^2$ to $(x_s,v_s)$ without changing $\tilde J$. Since $\tilde{J}(t_0-) \,\bot\, \nu$ and $\gamma_0$ arrives to $\rR$ transversely at $t_0$, we can introduce a second order change to $(x_s,v_s)$ such that $\gamma_s(t_0) \in \rR$ for all $s \in (-\epsilon',\epsilon')$ after choosing a sufficiently small $\epsilon' \in (0,\epsilon)$ (cf. \cite[p. 396]{Ilmavirta:Salo:2016}). This variation after the second order change is explicitly given by $s \mapsto \phi_{\tau_s}(x_s,v_s)$ where $\tau_s$ is the unique element in $$\{t \in [t_0-\delta,t_0+\delta]\,;\,\phi_{t_0+t}(x_s,v_s) \in \pi^{-1}\rR\,\}$$
for a sufficiently small $\delta>0$ depending upon the choice of $\epsilon'$.
One can check that $\tau_0=0$ and $\partial_s \tau_s|_{s=0} =0$ using that $\gamma_0(t_0) \in \rR$ and $\tilde{J}(t_0-) \,\bot\, \nu$. Since the family of curves $s \mapsto \gamma_s(t_0-\delta')$, $\delta' >0$ sufficiently small, arrives in the limit $\delta' \to 0$ tangentially to $\rR$ at $s=0$, it follows that $\partial_s \tau_s|_{s=0} =0$.

We may assume without loss of generality that all $\gamma_s$ have their unique reflection at $t_0$. This implies that $\gamma_s(t_0)=\gamma_{x_s,v_s}(t_0)$ is smooth at $s=0$ and
\begin{align}\label{eq:jac:int:2}
     \tilde{J}\left(t_0+\right)=\tilde{J}\left(t_0-\right)= \rho  \tilde{J}\left(t_0-\right).
\end{align}
By \eqref{eq:symmetry:thm}, we have 
\begin{equation}\label{eq:p1:1}
    D_{t}\tilde J(t_0-)=D_{t}\partial_s \gamma_s(t_0)\big|_{s=0,t=t_0-}=\left.D_{s} \dot{\gamma}_{s}(t_0-)\right|_{s=0}
\end{equation}
and 
\begin{equation}\label{eq:p1:2}
    D_{t}\tilde J(t_0+)=D_{t}\partial_s \gamma_s(t)\big|_{s=0,t=t_0+}=\left.D_{s}  \dot{\gamma}_{s}(t_0+)\right|_{s=0}=\left.D_{s} \rho_{s} \dot{\gamma}_{s}(t_0-)\right|_{s=0}.
\end{equation}
Let us denote by $y_{s}=\gamma_{s}(t_0)$, $u_{s}=\dot{\gamma}_{s}(t_0-)$ and $\nu(y_s )=\nu_s$. 
Thus we have
\begin{align}\label{eq:q1:1}
\notag D_{s}\left(\rho_{s} u_{s}\right)=& D_{s}\left(u_{s}-2\left\langle u_{s}, \nu_{s}\right\rangle \nu_{s}\right) \\
\notag =& [D_{s} u_{s}-2\left\langle D_{s} u_{s}, \nu_{s}\right\rangle \nu_{s}] -2\left(\left\langle u_{s}, D_{s} \nu_{s}\right\rangle \nu_{s}+\left\langle u_{s}, \nu_{s}\right\rangle D_{s} \nu_{s}\right) \\
=& [\rho_{s}\left(D_{s} u_{s}\right)]-2\left(\left\langle u_{s}, \nabla_{\partial_{s} y_{s}} \nu_{s}\right\rangle \nu_{s}+\left\langle u_{s}, \nu_{s}\right\rangle \nabla_{\partial_{s} y_{s}} \nu_{s}\right).
\end{align}
Using the fact that $\tilde J(t_0-)\perp \nu$, we obtain
\begin{align*}
    \varphi_{\dot{\gamma}\left(t_0-\right)} \tilde J\left(t_0-\right) 
    =\tilde J\left(t_0-\right)-\frac{\left\langle \tilde J\left(t_0-\right), \nu\right\rangle}{\left\langle\dot{\gamma}\left(t_0-\right), \nu\right\rangle} \dot{\gamma}\left(t_0-\right) 
    =\tilde J\left(t_0-\right).
\end{align*}
We also have
\begin{equation}\label{eq:q1:2}
    \begin{split}
     & 2\left(\left\langle u_{s}, \nabla_{\partial_{s} y_{s}} \nu_{s}\right\rangle \nu_{s}+\left\langle u_{s}, \nu_{s}\right\rangle \nabla_{\partial_{s} y_{s}} \nu_{s}\right)|_{s=0} \\
 &\qquad\qquad =2\left(\left\langle \dot{\gamma}\left(t_0-\right), \nabla_{\tilde J\left(t_0-\right)} \nu\right\rangle \nu+\left\langle \dot{\gamma}\left(t_0-\right), \nu\right\rangle \nabla_{\tilde J\left(t_0-\right)} \nu\right)\\
    &\qquad\qquad =\Phi_{\dot{\gamma}(t_0-)} \tilde J(t_0-).   
    \end{split}
\end{equation}
Taking $s=0$ in \eqref{eq:q1:1}, and combining \eqref{eq:p1:1} with \eqref{eq:p1:2} and \eqref{eq:q1:2}, we get
\begin{equation}\label{eq:int:a12}
    D_{t} \tilde J(t_0+)=\rho D_{t}\tilde J(t_0-)-\Phi_{\dot{\gamma}(t_0-)} \tilde J(t_0-)
\end{equation}
Finally, we have
\begin{align*}
    J(t_0+)&=\tilde{J}(t_0+)+\frac{\left\langle J\left(t_0-\right), \nu\right\rangle}{\left\langle I\left(t_0-\right), \nu\right\rangle} I(t_0+)\\
    &=\rho\tilde{J}(t_0-)+\frac{\left\langle J\left(t_0-\right), \nu\right\rangle}{\left\langle I\left(t_0-\right), \nu\right\rangle} \rho  I(t_0-)
    =\rho J(t_0-).
\end{align*}
Since \begin{align*}
    \varphi_{\dot{\gamma}\left(t_0-\right)} \frac{\left\langle J\left(t_0-\right), \nu\right\rangle}{\left\langle \dot\gamma\left(t_0-\right), \nu\right\rangle} \dot\gamma(t_0-)
    &=\frac{\left\langle J\left(t_0-\right), \nu\right\rangle}{\left\langle \dot\gamma\left(t_0-\right), \nu\right\rangle} \dot\gamma(t_0-)-\frac{\left\langle J\left(t_0-\right), \nu\right\rangle}{\left\langle \dot\gamma\left(t_0-\right), \nu\right\rangle}\frac{\langle\dot\gamma(t_0-),\nu  \rangle}{\langle\dot\gamma(t_0-),\nu  \rangle}\dot\gamma(t_0-) \\
    &=0,
\end{align*}
and $\nabla_{ \varphi_\zeta \zeta }=0$, it turns out that
\begin{align}\label{eq:int:aa1}
   \Phi_{\dot{\gamma}\left(t_0-\right)} \frac{\left\langle J\left(t_0-\right), \nu\right\rangle}{\left\langle \dot\gamma\left(t_0-\right), \nu\right\rangle} \dot\gamma(t_0-)= \frac{\left\langle J\left(t_0-\right), \nu\right\rangle}{\left\langle \dot\gamma\left(t_0-\right), \nu\right\rangle}\Phi_{\dot{\gamma}\left(t_0-\right)} \dot\gamma(t_0-)=0. 
\end{align}
Therefore
\begin{align*}
  D_tJ(t_0+)&= D_t \tilde{J}(t_0+)+\frac{\left\langle J\left(t_0-\right), \nu\right\rangle}{\left\langle I\left(t_0-\right), \nu\right\rangle}  D_t I(t_0+)  \\
  &=\rho D_{t}\tilde J(t_0-)-\Phi_{\dot{\gamma}(t_0-)} \tilde J(t_0-)-\frac{\left\langle J\left(t_0-\right), \nu\right\rangle}{\left\langle I\left(t_0-\right), \nu\right\rangle} \beta\left(\gamma\left(t_0-\right), \dot{\gamma}\left(t_0-\right)\right) \rho D_t I(t_0-)\\
  &=\rho D_{t} J(t_0-)-\Phi_{\dot{\gamma}(t_0-)} \tilde J(t_0-) \\
  &\qquad -\left(\beta\left(\gamma\left(t_0-\right), \dot{\gamma}\left(t_0-\right)\right)+1\right)\frac{\left\langle J\left(t_0-\right), \nu\right\rangle}{\left\langle I\left(t_0-\right), \nu\right\rangle} \rho  D_t I(t_0-).
\end{align*}
From \eqref{eq:int:aa1}, the above identity becomes
\begin{align*}
   D_tJ(t_0+)  &=\rho D_{t} J(t_0-)-\Phi_{\dot{\gamma}(t_0-)}  J(t_0-) \\
   &\qquad -\left(\beta\left(\gamma\left(t_0-\right), \dot{\gamma}\left(t_0-\right)\right)+1\right)\frac{\left\langle J\left(t_0-\right), \nu\right\rangle}{\left\langle I\left(t_0-\right), \nu\right\rangle} \rho  D_t I(t_0-),
\end{align*}
which is our desired conclusion. 
\end{proof}

Recall that
\begin{align*}
    \Phi_\zeta \xi&=2\left(\left\langle\nabla_{\varphi_\zeta \xi} \nu, \zeta\right\rangle \nu+\langle\nu, \zeta\rangle \nabla_{\varphi_\zeta \xi} \nu\right),\\
    \varphi_\zeta \xi&=\xi-\frac{\langle\xi, \nu\rangle}{\langle\zeta, \nu\rangle} \zeta.
\end{align*}
Since $\varphi_\zeta \xi \perp \nu$, it follows that $\nabla_{\varphi_\zeta \xi} \nu$ is well defined. Moreover, we observe that $\nabla_{\varphi_\zeta \xi} \nu\perp \nu $. 

We now simplify the map $\Phi_{\dot{\gamma}\left(t_0-\right)}  J\left(t_0-\right)$. To do so, we first compute 
\begin{align*}
    \varphi_{\dot{\gamma}\left(t_0-\right)} J\left(t_0-\right)= J\left(t_0-\right)-\frac{\langle J\left(t_0-\right),\nu\rangle}{\langle \dot{\gamma}\left(t_0-\right),\nu\rangle }\dot{\gamma}\left(t_0-\right).
\end{align*}
By properties of covariant derivative along curves (cf. \cite[Theorem 4.24]{Lee:riem:2nd}), we have
\begin{align*}
     \nabla_{\varphi_{\dot{\gamma}\left(t_0-\right)} J\left(t_0-\right)} \nu&=\nabla_{ J\left(t_0-\right)-\frac{\langle  J\left(t_0-\right),\nu\rangle}{\langle \dot{\gamma}\left(t_0-\right),\nu\rangle }\dot{\gamma}\left(t_0-\right)}\nu\\
     &=\nabla_{ J\left(t_0-\right)}\nu-\frac{\langle  J\left(t_0-\right),\nu\rangle}{\langle \dot{\gamma}\left(t_0-\right),\nu\rangle }\nabla_{\dot{\gamma}\left(t_0-\right)}\nu.
\end{align*}
Since \begin{align*}
\left\langle\nabla_{\varphi_{\dot{\gamma}\left(t_0-\right)}  J\left(t_0-\right)} \nu, \dot{\gamma}\left(t_0-\right)\right\rangle &= \langle \nabla_{ J\left(t_0-\right)}\nu, \dot{\gamma}\left(t_0-\right) \rangle-\frac{\langle J\left(t_0-\right),\nu\rangle}{\langle \dot{\gamma}\left(t_0-\right),\nu\rangle }\langle\nabla_{\dot{\gamma}\left(t_0-\right)}\nu , \dot{\gamma}\left(t_0-\right) \rangle,
\end{align*}
we see that
\begin{align*}
 \Phi_{\dot{\gamma}\left(t_0-\right)} J\left(t_0-\right)&=2   \langle \nabla_{ J\left(t_0-\right)}\nu, \dot{\gamma}\left(t_0-\right) \rangle \nu -2\frac{\langle J\left(t_0-\right),\nu\rangle}{\langle \dot{\gamma}\left(t_0-\right),\nu\rangle }\langle\nabla_{\dot{\gamma}\left(t_0-\right)}\nu , \dot{\gamma}\left(t_0-\right) \rangle \nu\\
 &\qquad +2 \langle \nu, \dot{\gamma}\left(t_0-\right) \rangle \nabla_{ J\left(t_0-\right)}\nu-2  \langle \nu, \dot{\gamma}\left(t_0-\right) \rangle \frac{\left\langle J\left(t_0-\right), \nu\right\rangle}{\left\langle\dot{\gamma}\left(t_0-\right), \nu\right\rangle} \nabla_{\dot{\gamma}\left(t_0-\right)} \nu\\
 &=2   \langle \nabla_{ J\left(t_0-\right)}\nu, \dot{\gamma}\left(t_0-\right) \rangle\nu -2\frac{\langle  J\left(t_0-\right),\nu\rangle}{\langle \dot{\gamma}\left(t_0-\right),\nu\rangle }\langle\nabla_{\dot{\gamma}\left(t_0-\right)}\nu , \dot{\gamma}\left(t_0-\right) \rangle \nu\\
 &\qquad +2 \langle \nu, \dot{\gamma}\left(t_0-\right) \rangle \nabla_{J\left(t_0-\right)}\nu-2\left\langle J\left(t_0-\right), \nu\right\rangle  \nabla_{\dot{\gamma}\left(t_0-\right)} \nu.
\end{align*}
Note that $\Phi_{\dot{\gamma}(t_0-)}$ is a linear map. Due to the compactness and smoothness of $\rR$, similar to \cite[Remark 13]{Ilmavirta:Salo:2016}, we establish that \begin{equation}\label{eq:Phi_A_relation}
    \Phi_{\dot{\gamma}\left(t_0-\right)} J\left(t_0-\right)=\left\langle\dot{\gamma}\left(t_0-\right), \nu\right\rangle^{-1} A J\left(t_0-\right)
\end{equation}
where the field of linear maps $A$ is uniformly bounded over the set $\pi^{-1}\rR$. This follows using compactness and smoothness of $\rR$ since $A$ at $(x,v) \in \pi^{-1}\rR$ depends only $\nu$, its first derivatives and the choice of a unit vector $(x,v)=(\gamma(t_0-),\dot{\gamma}(t_0-))$. This implies that the Jacobi field tends to infinity as the reflection becomes more tangential. In particular, we have obtained the following auxiliary result.

\begin{remark} The reflection condition on a Jacobi field along a broken $\lambda$-ray is given by 
\begin{align}\label{eq:jacobi:est:ref}
    \begin{split}
        J\left(t_0+\right)&=\rho J\left(t_0-\right), \ \text{and}\ \\
        D_t J\left(t_0+\right)=\rho D_t J\left(t_0-\right)&-\left(\beta\left(\gamma\left(t_0-\right),  \dot{\gamma}\left(t_0-\right)\right)+1\right) \frac{\left\langle J\left(t_0-\right), \nu\right\rangle}{\left\langle I\left(t_0-\right), \nu\right\rangle} \rho D_t I\left(t_0-\right)\\&+\left\langle\dot{\gamma}\left(t_0-\right), \nu\right\rangle^{-1}A J\left(t_0-\right). 
    \end{split}
\end{align}

\end{remark}
\begin{lemma}\label{lm:is:14}
Let $(M,g)$ be a compact Riemannian surface with smooth boundary. Under the assumptions of Lemma \ref{lm:jac:beta:int:1}, a Jacobi field $J$ along a broken $\lambda$-ray satisfies 
\begin{equation}
\begin{split}
      &  |J\left(t_{0}+\right)|^2+|D_{t} J\left(t_{0}+\right)|^2 \\
      & \qquad\le \frac{C}{\left\langle\dot{\gamma}\left(t_0+\right), \nu\right\rangle}\left(\left|J\left(t_0-\right)\right|^2+\left|D_t J\left(t_0-\right)\right|^2+|\dot \gamma (t_0-)|^2+|D_t\dot{\gamma}\left(t_0-\right)|^2  \right),
\end{split}
\end{equation}
at every reflection point $t_0$, where $C$ is a constant depending on $M$, $g$ and $\lambda$.
\end{lemma}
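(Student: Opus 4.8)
The plan is to obtain the estimate directly from the reflection conditions \eqref{eq:jacobi:est:ref} for a broken $\lambda$-Jacobi field, together with the structural identity \eqref{eq:Phi_A_relation} for $\Phi_{\dot\gamma(t_0-)}$ and the law of reflection for the velocity. Write $I(t)=\dot\gamma(t)$ and $c:=\langle\dot\gamma(t_0+),\nu\rangle$. Because the reflection is transverse, $c\neq 0$; because $w\mapsto w-2\langle w,\nu\rangle\nu$ is a linear isometry, the reflection $\rho$ preserves the Riemannian norm; and the law of reflection gives $\langle\dot\gamma(t_0-),\nu\rangle=-c$, hence $|\langle I(t_0-),\nu\rangle|=|c|=|\langle\dot\gamma(t_0+),\nu\rangle|$. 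The position term is then immediate: the first line of \eqref{eq:jacobi:est:ref} reads $J(t_0+)=\rho J(t_0-)$, so isometry of $\rho$ gives $|J(t_0+)|^2=|J(t_0-)|^2$, already of the required shape.

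For the derivative term I would take norms in the second line of \eqref{eq:jacobi:est:ref}, which decomposes $D_tJ(t_0+)$ into three pieces: $\rho D_tJ(t_0-)$, the reflection piece $-(\beta+1)\frac{\langle J(t_0-),\nu\rangle}{\langle I(t_0-),\nu\rangle}\rho D_tI(t_0-)$, and the curvature piece $\langle\dot\gamma(t_0-),\nu\rangle^{-1}AJ(t_0-)$. I estimate each separately: the first equals $|D_tJ(t_0-)|$ by isometry; in the second I use $|\beta+1|\le C$, valid since $\beta,1/\beta\in L^\infty(\pi^{-1}\rR)$ under the hypotheses of Lemma \ref{lm:jac:beta:int:1}, together with $|\langle J(t_0-),\nu\rangle|\le|J(t_0-)|$ by Cauchy--Schwarz, $|\langle I(t_0-),\nu\rangle|=|c|$, and $|\rho D_tI(t_0-)|=|D_t\dot\gamma(t_0-)|$; in the third I invoke \eqref{eq:Phi_A_relation}, where the field of linear maps $A$ is uniformly bounded over $\pi^{-1}\rR$, so that $|AJ(t_0-)|\le C|J(t_0-)|$. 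The triangle inequality then bounds $|D_tJ(t_0+)|$ by $|D_tJ(t_0-)|$ plus a constant multiple of $|c|^{-1}\bigl(|J(t_0-)|\,|D_t\dot\gamma(t_0-)|+|J(t_0-)|\bigr)$, the constant depending only on $M$, $g$ and $\lambda$ through the sup-norms of $\beta$ and $A$.

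It remains to square, add the position estimate, and tidy. Using $(a+b+c)^2\le 3(a^2+b^2+c^2)$ and Young's inequality to split the cross term $|J(t_0-)|\,|D_t\dot\gamma(t_0-)|$, and rewriting the prefactor through $|\langle\dot\gamma(t_0+),\nu\rangle|=|c|$, collects every contribution into a constant times a negative power of $\langle\dot\gamma(t_0+),\nu\rangle$ multiplying $|J(t_0-)|^2+|D_tJ(t_0-)|^2+|\dot\gamma(t_0-)|^2+|D_t\dot\gamma(t_0-)|^2$; the summands $|\dot\gamma(t_0-)|^2$ and $|D_t\dot\gamma(t_0-)|^2$ record the incoming velocity field $I=\dot\gamma$ and its derivative $D_tI=D_t\dot\gamma$ that enter the reflection piece, and both are in fact uniformly bounded ($|\dot\gamma|=1$ and $|D_t\dot\gamma|=|\lambda|$). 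The one delicate point, which is the real content of the lemma, is the singular factor $\langle\dot\gamma(t_0-),\nu\rangle^{-1}$ arising from \eqref{eq:Phi_A_relation} and from the denominator $\langle I(t_0-),\nu\rangle$ in the reflection piece: it encodes the blow-up of Jacobi fields as a reflection becomes tangential, admits no cancellation, and is precisely why this pointwise estimate is only useful in conjunction with the admissibility hypothesis bounding the number of near-tangential reflections; every other quantity is uniformly controlled, so no further geometric input is needed.
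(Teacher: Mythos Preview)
Your proposal is correct and follows essentially the same route as the paper's own proof: both argue directly from the reflection formula \eqref{eq:jacobi:est:ref}, use $|J(t_0+)|=|J(t_0-)|$ by isometry of $\rho$, bound the three summands in $D_tJ(t_0+)$ via the triangle inequality together with $\|\beta\|_{L^\infty}$ and the uniform bound on $A$ from \eqref{eq:Phi_A_relation}, and then square and collect terms. Your observation that $|\langle\dot\gamma(t_0-),\nu\rangle|=|\langle\dot\gamma(t_0+),\nu\rangle|$ is the link between the $t_0-$ quantities arising naturally in the estimate and the $t_0+$ appearing in the statement.
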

\begin{proof}
From \eqref{eq:jacobi:est:ref} and \eqref{eq:Phi_A_relation}, we have 
\[
|J\left(t_{0}+\right)|=|\rho J\left(t_{0}-\right)|=|J\left(t_{0}-\right)|,
\]
and
\begin{align*}
&|D_t J\left(t_0+\right)| \\
&\qquad\le 
\left|\rho D_{t} J(t_0-)\right|+(\|\beta\|_{L^{\infty}(\pi^{-1}\rR)}+1)\left|\frac{\left\langle J\left(t_0-\right), \nu\right\rangle}{\left\langle I\left(t_0-\right), \nu\right\rangle} \rho  D_t I(t_0-)\right|\\&\qquad\qquad+|\Phi_{\dot{\gamma}(t_0-)}  J(t_0-)|
\\
&\qquad\le |D_tJ\left(t_0-\right) |+(\|\beta\|_{L^{\infty}(\pi^{-1}\rR)}+1)|\left\langle\dot{\gamma}\left(t_0-\right), \nu\right\rangle^{-1}|\left|\left\langle J\left(t_0-\right), \nu\right\rangle   D_t I(t_0-)\right|\\
&\qquad\qquad+|\left\langle\dot{\gamma}\left(t_0-\right), \nu\right\rangle^{-1} A J\left(t_0-\right)|,
\end{align*}
 which implies \begin{align*}
       & |J\left(t_{0}+\right)|^2 
        +|D_{t} J\left(t_{0}+\right)|^2 \\
        &\qquad\le \frac{C}{|\left\langle\dot{\gamma}\left(t_0-\right), \nu\right\rangle|^2}\left(\left|J\left(t_0-\right)\right|^2+\left|D_t J\left(t_0-\right)\right|^2+|D_t\dot{\gamma}\left(t_0-\right)|^2 \right)\\
&\qquad\le          \frac{C}{|\left\langle\dot{\gamma}\left(t_0-\right), \nu\right\rangle|^2}\left(\left|J\left(t_0-\right)\right|^2+\left|D_t J\left(t_0-\right)\right|^2+|\dot{\gamma}\left(t_0-\right)|^2+|D_t\dot{\gamma}\left(t_0-\right)|^2 \right).\qedhere
    \end{align*} 
\end{proof}
\begin{remark}\label{rm:no_refle_bound}
Consider a family of broken $\lambda$-rays on $M$ satisfying $|\langle\dot{\gamma}, \nu\rangle| \geq a$ at each of the reflection points. Due to the compactness of $M$ and the requirement for traversality $|\langle\dot{\gamma}, \nu\rangle| \geq a$, we can assert the existence of a positive real number $l$ that bounds from below by the distance between any two consecutive reflection points for any broken $\lambda$-ray in the set. This provides us with a lower bound on the number of reflections.
If we denote by $N(t)$ the number of reflections of $\gamma$ in the time interval $(0,t)$, then from the preceding discussion it follows that $(N(t)-1)l \le t$, implying $N(t) \le 1 + \frac{t}{l}$.
\end{remark}
\begin{remark}\label{rm:jac:bound}
    Consider a unit-speed $C^1$ curve on the manifold $SM$, given by the mapping $s \in (-\varepsilon, \varepsilon) \mapsto (x_s, v_s)$. Let $\gamma_{x_s, v_s}(t)$ denote a $\lambda$-geodesic such that its initial conditions are $(\gamma_{x_s, v_s}(0), \dot\gamma_{x_s, v_s}(0)) = (x_s, v_s)$. Assume that $\gamma := \gamma_{x_0, v_0}$.
Now, let us examine the Jacobi field $J_s(t) = \partial_s\gamma_{x_s, v_s}(t)$. In this context, we have
    \begin{align*}
        |J_s(0)|^2+\left|D_t J_s(0)\right|^2&=\left|\partial_sx_s\right|^2+\left|D_s v_s\right|^2=1.
    \end{align*}
    Additionally, consider another Jacobi field $I(t) = \dot\gamma_{x, v}(t)$. In this case, we obtain
    \begin{align*}
        |I(0)|^2+|D_tI(0)|^2=|v|^2+|\lambda (x,v) iv |^2\le 1+\|\lambda\|^2
        _{L^\infty(SM)}.
    \end{align*}
\end{remark}
Without any assumption $\lambda\circ\rho =\lambda\beta$ on $\pi^{-1}\rR$, we can still control $|J(t)|^2+\left|D_t J(t)\right|^2$ as follows: 
\begin{corollary}\label{cor:final:jac:est}
 Let $(M,g)$ be a compact Riemannian surface with smooth boundary and $\lambda \in C^\infty(SM)$, and fix a number $a \in (0,1]$. Let $\gamma:[0,\tau]\to M$ be a broken $\lambda$-ray on $M$ such that $|\langle\dot{\gamma}, \nu\rangle| \geq a$ at every reflection point. Then for any variation field $J$ along $\gamma$, we have
\begin{equation}\label{eq:jac:bound:ref:uniform}
|J(t)|^2 + |D_t J(t)|^2 \leq A e^{B t} \left( |J(0)|^2 + |D_t J(0)|^2+ 1+\|\lambda\|^2_{L^{\infty}(SM)} \right)
\end{equation}
for all $t \in [0,\tau]$, where $A$ and $B$ are constants that depend only on $M,g,\lambda$ and $a$.
\end{corollary}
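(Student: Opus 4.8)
The plan is to interleave the segmentwise growth estimate of Lemma~\ref{lm:Jacobi:est} with a jump estimate at each reflection, and then to convert the resulting discrete product over reflections into a continuous exponential using the spacing bound of Remark~\ref{rm:no_refle_bound}. Throughout set $E(t):=|J(t)|^2+|D_tJ(t)|^2$ and $K:=1+\|\lambda\|^2_{L^\infty(SM)}$. On each $\lambda$-geodesic segment between two consecutive reflections, $J$ is an ordinary $\lambda$-Jacobi field, so Lemma~\ref{lm:Jacobi:est} gives $E(t')\le e^{C(t'-t'')}E(t'')$ for $t''\le t'$ in the same segment, with $C=C(M,g,\lambda)$. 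It remains only to control the jump of $E$ across each reflection time.

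The first and only genuinely new step is to establish the jump estimate of Lemma~\ref{lm:is:14} without assuming that $\beta$ and $1/\beta$ are bounded. I would use the reflection identity \eqref{eq:jacobi:est:ref} with the canonical choice $\beta=(\lambda\circ\rho)/\lambda$ off the zero set $Z$ of $\lambda$ and $\beta=1$ on $Z$. The only term in \eqref{eq:jacobi:est:ref} whose estimate seemed to require $\beta\in L^\infty$ is $(\beta+1)\tfrac{\langle J(t_0-),\nu\rangle}{\langle I(t_0-),\nu\rangle}\rho D_tI(t_0-)$. Since $I=\dot\gamma$ solves the $\lambda$-geodesic equation, one has $D_tI(t_0-)=\lambda\,i\dot\gamma(t_0-)$ at the base point, so the product $\beta\,D_tI(t_0-)=(\lambda\circ\rho)\,i\dot\gamma(t_0-)$ has norm at most $\|\lambda\|_{L^\infty}$ because $\gamma$ is unit speed; thus $|(\beta+1)D_tI(t_0-)|\le 2\|\lambda\|_{L^\infty}$ irrespective of the size of $\beta$. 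Feeding this together with the uniform bound $\Phi_{\dot\gamma(t_0-)}J(t_0-)=\langle\dot\gamma(t_0-),\nu\rangle^{-1}AJ(t_0-)$ from \eqref{eq:Phi_A_relation}, the identity $|J(t_0+)|=|J(t_0-)|$, and the transversality $|\langle\dot\gamma,\nu\rangle|\ge a$ at the reflection into the computation carried out in the proof of Lemma~\ref{lm:is:14}, I obtain a jump estimate of the form $E(t_0+)\le D\,(E(t_0-)+K)$ with $D=D(M,g,\lambda,a)$.

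With the segment and jump estimates in hand, I would iterate over the reflection times $0<t_1<\dots<t_N\le t$, where $N=N(t)$. Alternating the two bounds yields, by a straightforward induction, $E(t)\le \max(1,D)^{N}e^{Ct}\bigl(E(0)+NK\bigr)\le (N+1)\max(1,D)^{N}e^{Ct}\,(E(0)+K)$. By Remark~\ref{rm:no_refle_bound}, the transversality $|\langle\dot\gamma,\nu\rangle|\ge a$ forces a uniform positive lower bound $l=l(M,g,\lambda,a)$ on the distance between consecutive reflections, so that $N=N(t)\le 1+t/l$. Substituting this makes $(N+1)\max(1,D)^{N}$ a polynomial times an exponential in $t$, which is dominated by $Ae^{Bt}$ for suitable constants $A,B$ depending only on $M,g,\lambda,a$; this yields \eqref{eq:jac:bound:ref:uniform}.

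The main obstacle is the first step: removing the dependence on the possibly unbounded factor $\beta$. The point that unlocks it is that $\beta$ enters \eqref{eq:jacobi:est:ref} only through the product $\beta\,D_tI=(\lambda\circ\rho)\,i\dot\gamma$, which is automatically bounded by $\|\lambda\|_{L^\infty}$. Once this is observed, the remainder is bookkeeping: propagating the additive constant $K$ through the alternating segment and reflection estimates, and trading the discrete growth factor $\max(1,D)^{N(t)}$ for the continuous exponential rate $e^{Bt}$ via $N(t)\le 1+t/l$.
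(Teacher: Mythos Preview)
Your proof is correct and follows the same overall architecture as the paper: apply Lemma~\ref{lm:Jacobi:est} on each $\lambda$-geodesic segment, control the jump at reflections, iterate, and convert the product $D^{N(t)}$ into an exponential via the spacing bound $N(t)\le 1+t/l$ from Remark~\ref{rm:no_refle_bound}. The one genuine difference is in how you remove the $\beta$-dependence at the jump. The paper goes back to the decomposition $J=\tilde J+\tfrac{\langle J,\nu\rangle}{\langle I,\nu\rangle}I$ and tracks the quantities $|\tilde J|^2+|D_t\tilde J|^2$ and $|\dot\gamma|^2+|D_t\dot\gamma|^2$ separately through a double induction; you instead observe that in \eqref{eq:jacobi:est:ref} the factor $\beta$ only ever occurs in the combination $\beta\,D_tI(t_0-)=(\lambda\circ\rho)\,i\dot\gamma(t_0-)$, which is bounded by $\|\lambda\|_{L^\infty}$ regardless of $\beta$. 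This is a cleaner route to the same jump estimate $E(t_0+)\le D(E(t_0-)+K)$. One small caveat: the identity \eqref{eq:jacobi:est:ref} is stated in the paper under the $\beta$-reflection hypothesis, so to be fully rigorous you should either note that the derivation in Lemma~\ref{lm:jac:beta:int:1} actually yields the $\beta$-free form $D_tJ(t_0+)=\rho D_tJ(t_0-)-\tfrac{\langle J,\nu\rangle}{\langle I,\nu\rangle}\bigl(\rho D_tI(t_0-)-D_tI(t_0+)\bigr)-\Phi_{\dot\gamma(t_0-)}J(t_0-)$, or simply quote that formula directly.
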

\begin{proof}
Let us assume that there is a reflection at each $t=t_k$, where $k\in \{0,\cdots, N\}$. 
Then for any $t\in (0,t_0)$, using Lemma \ref{lm:Jacobi:est}, we have
\begin{equation}\label{eq:sr:1}
    |J(t)|^2+\left|D_t J(t)\right|^2 \leq e^{C t}\left(|J(0)|^2+\left|D_t J(0)\right|^2\right)
\end{equation} 
and also by considering Jacobi field $I(t)=\dot\gamma(t)$, we have 
\begin{align}\label{eq:sr:2}
|\dot\gamma(t)|^2+|D_t\dot\gamma(t)|^2\le  e^{C t}\left(|\dot\gamma(0)|^2+\left|D_t \dot\gamma(0)\right|^2\right)\le e^{C t}\left(1+\|\lambda\|^2_{L^{\infty}(SM)}\right).
\end{align}
This proves the case for $N=0$.

    Similar to the proof of Lemma \ref{lm:jac:beta:int:1}, we have $\tilde J$ such that \[J(t)=\tilde{J}(t)+\frac{\left\langle J\left(t_0-\right), \nu\right\rangle}{\left\langle I\left(t_0-\right), \nu\right\rangle} I(t).\]
We have $|I\left(t_0+\right)|=|I\left(t_0-\right)|=1$ and
    \begin{align*}
|D_t I\left(t_0+\right) |& =|\lambda\left(\gamma\left(t_0+\right), \dot{\gamma}\left(t_0+\right)\right) i \dot{\gamma}\left(t_0+\right)| \\
& =|\lambda \circ \rho\left(\gamma\left(t_0-\right), \dot{\gamma}\left(t_0-\right)\right)|| i \rho \dot{\gamma}\left(t_0-\right)| \\
&\le \|\lambda\|_{L^{\infty}(\pi^{-1}\rR)}.
\end{align*}
Also we have $|\tilde J(t_0+)|=|\tilde J(t_0-)|$ and from \eqref{eq:int:a12}, we obtain
\begin{align*}
    |D_t\tilde J(t_0+)|&=|\rho D_t \tilde{J}\left(t_0-\right)-\Phi_{\dot{\gamma}\left(t_0-\right)} \tilde{J}\left(t_0-\right)|\\
    &\le | D_t \tilde{J}\left(t_0-\right)|+ \frac{C}{|\left\langle\dot{\gamma}\left(t_0+\right), \nu\right\rangle|}|\tilde{J}\left(t_0-\right)|.
\end{align*}
We assume that for $t\in (0,t_{k-1})$ i.e. before the $k$-th reflection, we have
\begin{equation}\label{eq:J:ma:k}
    |J(t)|^2+\left|D_t J(t)\right|^2 \leq \frac{e^{k C t}  C_1^{k-1}}{a^{k-1}}\left(|J(0)|^2+\left|D_t J(0)\right|^2+(k-1)(1+\|\lambda\|^2_{L^{\infty}(SM)})\right).
\end{equation}
Also 
\begin{align}\label{eq:I:ma:k}
    |\dot \gamma(t)|^2+\left|D_t\dot \gamma (t)\right|^2 \leq e^{k C t} \frac{C_1^{k-1}}{a^{k-1}}\left(1+\|\lambda\|^2_{L^{\infty}(SM)}\right).
\end{align}
We now prove the estimate for any $t\in (0,t_k)$, that is before the $(k+1)$-th reflection. Using the assumption that $|\langle\dot{\gamma}, \nu\rangle| \geq a$ where $a\in (0,1]$ and Lemma \ref{lm:is:14}, we can deduce that at each reflection point, we have
\[
 \left|J\left(t_k+\right)\right|^2+\left|D_t J\left(t_k+\right)\right|^2 \leq \frac{C_1}{a}\left(\left|J\left(t_k-\right)\right|^2+\left|D_t J\left(t_k-\right)\right|^2+\left| \dot{\gamma}\left(t_k-\right)\right|^2+\left|D_t \dot{\gamma}\left(t_k-\right)\right|^2\right)\label{eq:J:reflection}   
\]
   and 
  \begin{align}
       \left| \dot{\gamma}\left(t_k+\right)\right|^2+\left|D_t \dot{\gamma}\left(t_k+\right)\right|^2
       &\le \frac{C_1}{a}\left(\left| \dot{\gamma}\left(t_k-\right)\right|^2+\left|D_t \dot{\gamma}\left(t_k-\right)\right|^2\right). \label{eq:I:reflection}
  \end{align}
\noindent
Combining \eqref{eq:J:ma:k} with \eqref{eq:I:ma:k} and Lemma \ref{lm:is:14}, we have
\begin{align*}
     &\left|J\left(t\right)\right|^2+\left|D_t J\left(t\right)\right|^2 \\
     &\qquad\leq e^{C t}\left(|J(t_{k-1}+)|^2+\left|D_t J(t_{k-1}+)\right|^2\right)\\
     &\qquad\le e^{C t}\frac{C_1}{a}\left(\left|J\left(t_{k-1}-\right)\right|^2+\left|D_t J\left(t_{k-1}-\right)\right|^2+\left|\dot{\gamma}\left(t_{k-1}-\right)\right|^2+\left|D_t \dot{\gamma}\left(t_{k-1}-\right)\right|^2\right)\\
     &\qquad\le e^{(k+1)C t}\frac{C_1^k}{a^k}\left(|J(0)|^2+\left|D_t J(0)\right|^2+(k-1)(1+\|\lambda\|^2
        _{L^\infty(SM)}) \right)\\
     &\qquad\qquad + e^{(k+1)C t}\frac{C_1^k}{a^k}\left(1+\|\lambda\|^2
        _{L^\infty(SM)}\right)\\
     &\qquad=e^{(k+1)C t}\frac{C_1^k}{a^k}\left(|J(0)|^2+\left|D_t J(0)\right|^2+k(1+\|\lambda\|^2
        _{L^\infty(SM)}) \right).
\end{align*}
and 
\begin{align*}
    |\dot{\gamma}(t)|^2+\left|D_t \dot{\gamma}(t)\right|^2&\le e^{Ct} \left(\left|\dot\gamma\left(t_{k-1}+\right)\right|^2+\left|D_t \dot\gamma\left(t_{k-1}+\right)\right|^2\right)\\
    &\le e^{Ct}\frac{C_1}{a} \left(\left|\dot\gamma\left(t_{k-1}-\right)\right|^2+\left|D_t \dot\gamma\left(t_{k-1}-\right)\right|^2\right)\\
    &\le e^{(k+1)C t}\frac{C_1^k}{a^k}\left(1+\|\lambda\|^2
        _{L^\infty(SM)}\right).
\end{align*}
Note that 
\begin{align*}
     \frac{C_1^k}{a^k}&\left(|J(0)|^2+\left|D_t J(0)\right|^2+k(1+\|\lambda\|^2
        _{L^\infty(SM)})\right)\\ &\qquad\qquad \le  \frac{(2C_1)^k}{a^k}\left(|J(0)|^2+\left|D_t J(0)\right|^2+1+\|\lambda\|^2
        _{L^\infty(SM)}\right).
\end{align*}
From this analysis one can see that there is constant $A=e^{Ct}\frac{2C_1}{a}$, such that at each reflection point $|J(t)|^2+|D_tJ(t)|^2$ increases by the factor $A$.
Now consider the interval $(0,t)$ where any broken ray has less than $1+t/l$ number of reflections by Remark \ref{rm:no_refle_bound}. We may conclude the estimate
\begin{align*}
     |J(t)|^2+\left|D_t J(t)\right|^2&\le A^{N(t)}e^{Ct}\left(|J(0)|^2+\left|D_t J(0)\right|^2+1+\|\lambda\|^2
        _{L^\infty(SM)}\right)\\
     &= e^{\left(1+\frac{t}{l}\right)\log A+Ct}\left(|J(0)|^2+\left|D_t J(0)\right|^2+1+\|\lambda\|^2
        _{L^\infty(SM)}\right)\\
&=Ae^{Bt}\left(|J(0)|^2+\left|D_t J(0)\right|^2+1+\|\lambda\|^2
        _{L^\infty(SM)}\right),
\end{align*}
where $B=\frac{\log A}{l}+C$. Note that here, $A$ and $B$ are constants that depend only on $M, g, \lambda$, and $a$, which proves the lemma.
\end{proof}

\subsection{Regularity of solutions to the transport equation}\label{sec:regularity-of-solutions}
In \cite[Lemma 3]{Ilmavirta:Paternain:2022}, a regularity result is proven for the primitive function corresponding to $f\in C^2(SM)$, which is simply the solution to the transport equation. This result shows that the primitive function is twice continuously differentiable in the interior of $SM$ and Lipschitz continuous in $SM$. In this subsection, we extend this result to the solution of the transport equation associated with broken $\lambda$-rays.
We first aim at establishing the regularity result for both forward and backward exit times. In \cite[Lemma 3.2.3]{GIP2D}, the regularity of exit time has been demonstrated in the context of usual geodesics. Following a similar approach, we extend this result to the case of the $\lambda$-geodesic.
\begin{lemma}\label{lm:tau:smooth}
Let $(M, g)$ be a compact nontrapping Riemannian surface with strictly $\lambda$-convex boundary where $\lambda\in C^{\infty}(SM)$. Then $\tau$ and $\tau^{-}$ are continuous on $SM$ and smooth on $SM\setminus \partial_0SM$. 
\end{lemma}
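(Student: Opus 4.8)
The plan is to adapt the proof of the corresponding statement for ordinary geodesics in \cite[Lemma 3.2.3]{GIP2D}, the only genuinely new ingredient being the effect of the vertical term in the generator $F=X+\lambda V$ on the convexity computation. Since $\lambda\in C^\infty(SM)$, the field $F$ is smooth, so after extending $(M,g)$ to a slightly larger open surface $\hat M$ and $\lambda$ to a smooth function on $S\hat M$ (keeping $\partial M$ strictly $\lambda$-convex), the $\lambda$-geodesic flow $\phi_t$ is smooth in $(t,x,v)$ on a neighbourhood of $\{0\}\times SM$. The dual travel time $\tau^{-}$ is generated by $F^{-}=X+\lambda^{-}V$ with $\lambda^{-}(x,v)=-\lambda(x,-v)\in C^\infty(SM)$, and by the duality relation $\kappa_{\lambda^{-}}(x,v)=\kappa_\lambda(x,-v)$ (cf. Corollary \ref{cor:dual-curvatures}) the boundary is strictly $\lambda^{-}$-convex whenever it is strictly $\lambda$-convex; hence everything proved for $\tau$ transfers verbatim to $\tau^{-}$, and I treat only $\tau$.

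For smoothness on $SM\setminus\partial_0 SM$ I would fix a boundary defining function $\rho\in C^\infty(\hat M)$ with $M=\{\rho\ge0\}$, $\partial M=\{\rho=0\}$ and $\nabla\rho=\nu$ along $\partial M$, and set $H(t,x,v):=\rho(\pi\phi_t(x,v))$, so that $\tau_{x,v}$ solves $H(\tau_{x,v},x,v)=0$. The implicit function theorem then gives a local smooth $\tau$ as soon as $\partial_t H(\tau,x,v)=\langle\nu,\dot\gamma_{x,v}(\tau)\rangle\neq0$, i.e.\ as soon as the first exit is transversal. The key point is that strict $\lambda$-convexity forbids a tangential first exit from a non-glancing point: if $\dot\gamma(\tau)\in T\partial M$, then differentiating twice along the $\lambda$-geodesic and using $D_t\dot\gamma=\lambda i\dot\gamma$ together with $\operatorname{Hess}\rho|_{T\partial M}=-\sff$ gives
\begin{equation}
\frac{d^2}{dt^2}H(t,x,v)\Big|_{t=\tau}=\operatorname{Hess}\rho(\dot\gamma,\dot\gamma)+\langle\nu,\lambda i\dot\gamma\rangle=-\kappa_\lambda(\gamma(\tau),\dot\gamma(\tau))<0 .
\end{equation}
Thus $t\mapsto H(t,x,v)$ would attain a strict local maximum equal to $0$ at $t=\tau$, forcing $\gamma$ outside $M$ just before $\tau$ and contradicting $\gamma(t)\in\inte(M)$ for $t$ slightly below the first exit time. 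Hence $\langle\nu,\dot\gamma(\tau)\rangle<0$, the implicit function theorem applies, and nontrapping guarantees $\tau$ is finite (the same estimate handles strictly outflux points, where $\tau=0$).

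The main obstacle is continuity across the glancing region $\partial_0 SM$, where $\tau=0$ but the implicit function theorem is unavailable. I would promote the computation above to a quantitative barrier: by compactness of $\partial_0 SM$ and $\kappa_\lambda>0$, there is a neighbourhood $U$ of $\partial_0 SM$ in $S\hat M$ and a constant $c>0$ with $\frac{d^2}{dt^2}\rho(\pi\phi_t)\le-c$ whenever $\phi_t\in U$. Given $(x_n,v_n)\to(x_0,v_0)\in\partial_0 SM$, the quantities $a_n:=\rho(x_n)$ and $b_n:=\langle\nabla\rho(x_n),v_n\rangle$ tend to $0$, and while the trajectory remains in $U$ the nonnegative function $h_n(t):=\rho(\pi\phi_t(x_n,v_n))$ satisfies $0\le h_n(t)\le a_n+b_n t-\tfrac{c}{2}t^2$, which forces $\tau_n\le\Order(\sqrt{a_n}+|b_n|)\to0$; continuity of the flow ensures the trajectory indeed stays in $U$ for this short time. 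This yields an explicit modulus of continuity at glancing points, and combined with the interior smoothness it gives continuity of $\tau$ on all of $SM$; the same concavity estimate also shows $\tau$ is bounded.
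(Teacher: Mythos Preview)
Your proof is correct and follows essentially the same route as the paper: extend $M$, use a boundary defining function, rule out tangential exits via the second-derivative computation $\partial_t^2(\rho\circ\gamma)|_{\partial_0SM}=-\kappa_\lambda<0$, and then apply the implicit function theorem. The paper establishes continuity first (by a soft argument showing $\rho(\gamma_{x_0,v_0}(\tau_0+\epsilon))<0$ from strict $\lambda$-convexity) and deduces smoothness afterwards, whereas you reverse the order and give a more quantitative barrier at the glancing region; these are cosmetic differences.
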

\begin{proof}
We establish the result for the forward exit time only as the proof is identical for $\tau^{-}$.
     Let $(N, g)$ be a closed extension of $(M, g)$ (cf. \cite[Lemma 3.1.8]{GIP2D}). Define $\rho$ as a boundary defining function on $M$. Consider a $\lambda$-geodesic $\gamma$ with initial conditions $\gamma(0) = x$ and $\dot\gamma(0) = v$. We now analyze the function $\rho(\gamma_{x, v}(t))$ for $(x, v) \in SM \setminus \partial_0 SM$, where $\rho$ is a boundary defining function on $M$. 
     
First, we show that $\tau$ is continuous on $S M$. Let $\left(x_0, v_0\right) \in \operatorname{int} S M$ with $\tau\left(x_0, v_0\right)>0$. We can choose $\epsilon>0$ such that $\inf _{t \in\left[0, \tau\left(x_0, v_0\right)-\epsilon\right]} \rho\left(\gamma_{x_0, v_0}(t)\right)>0$. This condition ensures that $\inf _{t \in\left[0, \tau\left(x_0, v_0\right)-\epsilon\right]} \rho\left(\gamma_{x, v}(t)\right)>0$ for $(x, v)$ close to $\left(x_0, v_0\right)$. Consequently, $\gamma_{x, v}(t) \in M$ for $0<t<\tau\left(x_0, v_0\right)-\epsilon$ and $(x, v)$ near $\left(x_0, v_0\right)$, which implies $\tau(x, v) \geq \tau\left(x_0, v_0\right)-\epsilon$ for $(x, v)$ close to $\left(x_0, v_0\right)$, i.e., 
\begin{equation}\label{eq:tau:1:c}
    \tau(x, v)-\tau\left(x_0, v_0\right)>-\epsilon.
\end{equation}
Given that $\partial M$ is strictly $\lambda$-convex, we find that $\rho\left(\gamma_{x_0, v_0}\left(\tau\left(x_0, v_0\right)+\epsilon\right)\right)<0$ for some small $\epsilon>0$. This condition ensures that $\rho\left(\gamma_{x, v}\left(\tau\left(x_0, v_0\right)+\epsilon\right)\right)<0$ for $(x, v)$ close to $\left(x_0, v_0\right)$. Thus, $\tau(x, v) \leq \tau\left(x_0, v_0\right)+\epsilon$ for $(x, v)$ near $\left(x_0, v_0\right)$, implying $\tau(x, v)-\tau\left(x_0, v_0\right)<\epsilon$. 
\begin{equation}\label{eq:tau:2:c}
    \tau(x, v)-\tau\left(x_0, v_0\right)<\epsilon.
\end{equation}
Therefore, from \eqref{eq:tau:1:c} and \eqref{eq:tau:2:c}, we can find $\epsilon>0$ such that $\left|\tau(x, v)-\tau\left(x_0, v_0\right)\right|<\epsilon$ for $(x, v)$ in the neighborhood of $\left(x_0, v_0\right)$. A similar argument applies for $\left(x_0, v_0\right) \in \partial S M$, in which case, $\tau\left(x_0, v_0\right)=0$.
     
 Next, we show that $\tau$ is smooth in $SM\setminus \partial_0SM$. Similar to the proof of \cite[Lemma 3.2.3]{GIP2D}, we have $h: S N \times \mathbb{R} \rightarrow \mathbb{R}, $ such that $ h(x, v, t):=\rho\left(\gamma_{x, v}(t)\right)$ and
     \begin{align*}
       \frac{\partial h}{\partial t}(x, v, t)=  \frac{\partial}{\partial t}\rho\left(\gamma_{x, v}(t)\right)=\left.\left\langle\nabla \rho\left(\gamma_{x,v}(t)\right), \dot{\gamma}_{x,v}(t)\right)\right\rangle.
     \end{align*}
     Note that $\gamma_{x,v}(\tau_{x,v})\in \partial M$. This implies that the tangent vector $\dot\gamma_{x,v}(\tau_{x,v})$ must lie in $\partial_- SM$; otherwise if $\dot\gamma_{x,v}(\tau_{x,v})$ were not in $\partial_- SM$, the geodesic $\gamma_{x,v}$ could be extended beyond the point $\gamma_{x,v}(\tau_{x,v})$, contradicting the fact that $\gamma_{x,v}(\tau_{x,v})$ is the final point on $M$.
    By strict $\lambda$-convexity,  one must have $\dot\gamma_{x,v}(\tau_{x,v})\notin \partial_0SM$. 
Since $\dot\gamma_{x,v}(\tau_{x,v})\in \partial SM\setminus \partial_0 SM$, i.e. $\langle \dot\gamma_{x,v}(\tau_{x,v}),\nu\rangle<0$ and 
\begin{align*}
    \frac{\partial h}{\partial t}(x, v, \tau^+_{x, v})=\langle\nabla \rho(\gamma_{x,v}(t)), \dot{\gamma}_{x,v}(t))\rangle |_{\tau^{+}_{x,v}} = \langle \nu,\dot\gamma_{x,v}(\tau_{x,v}) \rangle<0,
\end{align*}
it follows from the definition of boundary defining function that
$h(x,v,\tau_{x,v})=0$ and $h$ is smooth. Finally, by the implicit function theorem, we conclude that $\tau$ is smooth in $SM\setminus \partial_0SM$.
\end{proof}
\begin{remark}
    Similar to the case of broken rays (cf. \cite[p. 399]{Ilmavirta:Salo:2016}), using Lemma \ref{lm:tau:smooth}, $\tau$ and $\tau^-$ are smooth near any point $(x,v)$ such that the broken $\lambda_{x,v}$-ray reflects and exits transversely.
\end{remark}
\begin{remark}
Let $\sigma:(-\epsilon,\epsilon)\to SM$ be a smooth curve such that $\sigma(s)=(x_s,v_s)$. Note that the function $h$ (same as defined in Lemma \ref{lm:tau:smooth}), satisfies the property that $h(x_s,v_s,\tau(x_s,v_s))=0$ and $\frac{\partial h}{\partial t}\left(x_s, v_s, \tau_{x_s, v_s}\right)\ne 0$. By the implicit function theorem, we have 
\begin{align}\label{eq:tau:derivative}
   \begin{split}
        \partial_s\tau_{x_s, v_s}&= -\left[ \frac{d}{dt}h(x_s,v_s,\tau_{x_s,v_s})\right]^{-1}\left[ \partial_sh(x_s,v_s,\tau_{x_s,v_s})\right]\\
    &=-\frac{\left.\left\langle\nabla \rho\left(\gamma_{x_s, v_s}(t)\right), \partial_s{\gamma}_{x_s, v_s}(t)\right)\right\rangle\left.\right|_{t=\tau_{x_s, v_s}^{}}}{\left.\left\langle\nabla \rho\left(\gamma_{x_s, v_s}(t)\right), \dot{\gamma}_{x_s, v_s}(t)\right)\right\rangle\left.\right|_{t=\tau_{x_s, v_s}^{}}}\\
    &=-\frac{\left.\left\langle\nu, \partial_s{\gamma}_{x_s, v_s}(t)\right)\right\rangle\left.\right|_{t=\tau_{x_s, v_s}^{}}}{\left.\left\langle\nu, \dot{\gamma}_{x_s, v_s}(t)\right)\right\rangle\left.\right|_{t=\tau_{x_s, v_s}}}.
   \end{split}
\end{align}
\end{remark}
\begin{definition}
Let $(M,g)$ be a Riemannian surface and $\lambda\in C^{\infty}(SM)$.
  Let us denote the interior $\lambda$-scattering relation by $\tilde \alpha : SM \rightarrow \partial SM$. Given any point and direction $(x, v) \in SM$, we map it via $\tilde\alpha$ to the first intersection point and direction of the $\lambda$-geodesic $\gamma_{x, v}$ with the boundary $\partial M$ (i.e. either $\E$ or $\rR$).
\end{definition}
\begin{remark}
    Note that in the case a $\lambda$-geodesic $\gamma_{x,v}$ hits $\partial M$ non-tangentially, then in a neighborhood of $(x,v)$ the map $\tilde\alpha$ is smooth.
\end{remark}
\begin{remark}\label{rm:smooth_dependence}
    When a broken $\lambda$-ray $\gamma_{x,v}$ hits non-tangentially to $\rR$ (possibly multiple times) and reach a point on $\E$ transversally (by strict convexity), then there is a smooth dependence for the end point on $\E$ of the broken $\lambda$-ray on its initial data $(x,v)$. This follows from the smoothness of $\tilde\alpha$ and $\rho$ map.
\end{remark}
\begin{lemma}\label{lm:reg:u}
Let $(M,g,\lambda,\E)$ be an admissible dynamical system. If $f\in C^2(SM)$ satisfies $If=0$, then the primitive function $u$ solving \eqref{eq:trans:1} has the regularity $u\in C^2(\operatorname{int}(SM)) \cap \operatorname{Lip}(SM)$.
\end{lemma}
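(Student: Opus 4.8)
The plan is to represent $u$ through its defining integral \eqref{eq:u:def}, namely $u(x,v)=\int_0^{\tau_{x,v}}f(\phi_t(x,v))\,dt$, and to control its regularity by differentiating under the integral sign. A tangent variation $s\mapsto(x_s,v_s)$ with $(x_0,v_0)=(x,v)$ yields
\[
\partial_s u(x_s,v_s)\big|_{s=0}=f\!\left(\phi_{\tau_{x,v}}(x,v)\right)\partial_s\tau_{x_s,v_s}\big|_{s=0}+\int_0^{\tau_{x,v}} df_{\phi_t(x,v)}\!\left[\,\partial_s\phi_t(x_s,v_s)\big|_{s=0}\,\right]dt,
\]
so everything is governed by (a) the smoothness of the exit time $\tau$ and of the broken flow $\phi_t$ in the initial data, and (b) a uniform bound on the variation field $\partial_s\phi_t$, which is a broken $\lambda$-Jacobi field. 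Both are available whenever every reflection is transverse: the exit time is smooth with derivative \eqref{eq:tau:derivative} (Lemma \ref{lm:tau:smooth} and the remarks following it), the individual reflection times depend smoothly on $(x,v)$ by the implicit function theorem at each transverse reflection (Remark \ref{rm:smooth_dependence}), and the broken Jacobi field obeys the growth bound \eqref{eq:jac:bound:ref:uniform} of Corollary \ref{cor:final:jac:est}.

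The single obstruction is a near-tangential reflection, where $\abs{\ip{\dot\gamma}{\nu}}$ is small: there the Jacobi field blows up like $\ip{\dot\gamma}{\nu}^{-1}$ (Lemma \ref{lm:is:14}) and the flow is not smooth across the reflection. Here I exploit the admissibility hypothesis that the maximal broken $\lambda$-ray through any point carries at most one reflection with $\abs{\ip{\nu}{\dot\gamma}}<a$, together with the time-reversal identity $u(x,v)=-u^{-}(x,-v)$ furnished by $If=0$ (Lemma \ref{dual_tr_lem} and Remark \ref{rmk:dual_tr_lem}). Fix $(x_0,v_0)\in\inte(SM)$ and consider the full maximal broken ray through it, the concatenation of the forward orbit $\phi_t(x_0,v_0)$, $t\in[0,\tau_{x_0,v_0}]$, with the backward orbit $\phi^{-}_t(x_0,-v_0)$. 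By hypothesis at most one reflection of this ray is near-tangential, so it lies on the forward half or on the backward half. If it lies on the forward half (or there is none), I compute $u$ near $(x_0,v_0)$ through $u(x,v)=-u^{-}(x,-v)$, since the dual orbit then traverses only the backward half, whose reflections are all transverse with $\abs{\ip{\dot\gamma}{\nu}}\ge a$; if it lies on the backward half, I use the forward representation \eqref{eq:u:def} directly. In either case $u$ is expressed, near $(x_0,v_0)$, as the primitive of $f$ along a broken orbit all of whose reflections are transverse and whose exit on $\E$ is transverse by strict $\lambda$-convexity.

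Granting this, Step (a) applies to the chosen representation at every interior point, giving $u\in C^2(\inte(SM))$. For the global Lipschitz bound I note that the gradient estimate is uniform in $(x_0,v_0)$: along the selected forward or dual representation every reflection satisfies $\abs{\ip{\dot\gamma}{\nu}}\ge a$, the number of reflections is bounded by Remark \ref{rm:no_refle_bound} together with $t\le L$ from nontrapping, and hence \eqref{eq:jac:bound:ref:uniform} bounds the Jacobi field by a constant depending only on $M,g,\lambda,a,L$, while the boundary term in the derivative formula is bounded because the exit on the strictly $\lambda$-convex emitter $\E$ is transverse, so \eqref{eq:tau:derivative} stays finite. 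Thus $\abs{du}$ is bounded on the dense open set of directions whose orbit is free of tangential reflections; since $u$ is continuous on $SM$ (from continuity of $\tau$ in Lemma \ref{lm:tau:smooth} and dominated convergence), this uniform gradient bound upgrades to $u\in\operatorname{Lip}(SM)$.

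I expect the main obstacle to be the bookkeeping at the near-tangential reflection: one must check that switching between the forward and the dual representations is legitimate precisely because $If=0$, and that the chosen representation genuinely avoids the one bad reflection while all remaining reflections stay uniformly transverse, so that both the smooth dependence of the reflection and exit times and the uniform Jacobi bound of Corollary \ref{cor:final:jac:est} apply with no blow-up.
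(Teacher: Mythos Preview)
Your overall structure matches the paper's proof: use the duality identity $u(x,v)=-u^{-}(x,-v)$ from Lemma \ref{dual_tr_lem} to arrange that the chosen (forward or dual) orbit has all its reflections on $\rR$ uniformly transverse with $\abs{\ip{\dot\gamma}{\nu}}\ge a$, then differentiate under the integral and control the broken $\lambda$-Jacobi field via Corollary \ref{cor:final:jac:est}. The $C^2(\inte SM)$ part is handled correctly.

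There is, however, a genuine gap in your Lipschitz argument. You write that ``the boundary term in the derivative formula is bounded because the exit on the strictly $\lambda$-convex emitter $\E$ is transverse, so \eqref{eq:tau:derivative} stays finite.'' Strict $\lambda$-convexity guarantees that any \emph{individual} interior orbit exits $\E$ transversally, but it gives no \emph{uniform} lower bound on $\abs{\ip{\dot\gamma(\tau)}{\nu}}$ as $(x,v)$ varies over $\inte(SM)$. Indeed, orbits that exit nearly tangentially to $\E$ exist (think of short chords grazing the boundary), and for these the factor $\partial_s\tau_{x_s,v_s}|_{s=0}=-\ip{J}{\nu}/\ip{\dot\gamma}{\nu}$ blows up like $\abs{\ip{\dot\gamma(\tau)}{\nu}}^{-1}$. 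The admissibility assumption (5) concerns only reflections on $\rR$, not exits on $\E$, and the duality trick does not help here either: for a short nearly tangential chord both endpoints on $\E$ are simultaneously nearly tangential.

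The missing ingredient, which the paper supplies, is that the hypothesis $If=0$ forces $f(y,w)=0$ for every $(y,w)$ with $y\in\E$ and $w\in S_y\E$. Since $f\in C^2(SM)$ is Lipschitz, this yields
\[
\abs{f\big(\gamma(\tau_{x,v}),\dot\gamma(\tau_{x,v})\big)}\le C\,\abs{\ip{\dot\gamma(\tau_{x,v})}{\nu}},
\]
which exactly cancels the singularity in $\partial_s\tau$ and makes the boundary term uniformly bounded. The paper implements this by splitting into the cases $\abs{\ip{\dot\gamma(\tau)}{\nu}}\ge b$ (direct bound) and $\abs{\ip{\dot\gamma(\tau)}{\nu}}<b$ (short chord near $\E$, no reflections, and the above cancellation). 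You need to add this step; without it the Lipschitz bound is not established.
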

It is clear that $u$ solves \eqref{eq:trans:1}. Hence, we split the proof into two cases for the regularity of $u$.
 \begin{proof}[Proof of $u\in C^2(\operatorname{int}SM)$]\renewcommand{\qedsymbol}{}Let $(x,v)\in \operatorname{int}(SM)$. From the admissibility condition, $\gamma_{x,v}(t)$ or $\gamma^-_{x,-v}(t)$ has no tangential reflections.  From Lemma \ref{dual_tr_lem}, we have 
  \begin{equation}\label{eq:u:pm}
     u(x,v)=-u^-(x,-v)
 \end{equation}
 where $u^-$ is the solution to the dual transport equation. Hence it suffices to show that either $u$ or $u^-$ is $C^2$ at $(x,v)$ or $(x,-v)$ respectively. Without loss of generality, we may assume $\gamma_{x,v}(t)$ has no tangential reflections. Now, for some $N =0,1,2,\dots$, we have
\begin{align}\label{eq:sum:k:u}
     u(x,v)=\sum_{k=0}^N u_{k}(x,v)
 \end{align}
 where 
 \begin{align*}
u_k(x,v)=\int_{\tau_k}^{\tau_{k+1}}f(\phi_t(x,v))dt
 \end{align*}
 and $\gamma_{x,v}$ has reflections at $\tau_1,\cdots \tau_N$ with $\tau_0=0$, $\tau_{N+1}=\tau_{x,v}$.  
Since the broken $\lambda$-ray hits $\rR$ transversely, $\tau_k(x,v)$ are smooth in some neighborhood of $(x,v)$, $\lambda$-geodesic flow is smooth and $f\in C^2(SM)$, we have that all $u_k(x,v)$ are $C^2$ functions in some neighborhood of the point $(x,v)$. 
 As each $u_k$ is a $C^2$ function at the point $(x,v)$, it follows from \eqref{eq:sum:k:u} that the function $u$ is also $C^2$ at $(x,v)$.
\end{proof}
\begin{proof}[Proof of $u\in \operatorname{Lip}(SM)$] If we show that first order derivatives $u$ are uniformly bounded in int$(SM)$, then this implies $u$ is Lipschitz. To show this, similar to \cite[p. 1283]{Ilmavirta:Paternain:2022}, we consider a $C^1$ unit speed curve $(-\varepsilon, \varepsilon) \ni s \mapsto\left(x_s, v_s\right) \in \operatorname{int} S M$  with $\left(x_0, v_0\right)=(x, v)$. Using Lemma \ref{dual_tr_lem} again, we can assume without loss of generality that $\gamma_{x,v}$ have no tangential reflections and $|\langle \dot\gamma,\nu\rangle |\geq a$. Now
\begin{align}
     \begin{split}
      \partial_s u\left(x_s, v_s\right)&=  f\left(\gamma_{x_s, v_s}\left(\tau_{x_s, v_s}\right), \dot{\gamma}_{x_s, v_s}\left(\tau_{x_s, v_s}\right)\right) \partial_s\tau_{x_s, v_s} \\ & \qquad+\int_0^{\tau_{x_s, v_s}} \partial_s f\left(\gamma_{x_s, v_s}(t), \dot{\gamma}_{x_s, v_s}(t)\right) \mathrm{d} t.
     \end{split}
 \end{align}
Let us start by examining the integral term
\begin{align*}
\left.\partial_s f\left(\gamma_{x_s, v_s}(t), \dot{\gamma}_{x_s, v_s}(t)\right)\right|_{s=0} &= \langle (\partial_s\gamma_{x_s, v_s},D_s\dot{\gamma}_{x_s, v_s})|_{s=0}, \nabla_{SM} f (\gamma(t),\dot\gamma(t))\rangle = \langle (J,\dot J ),\nabla_{SM}f \rangle
\end{align*}
where $J := \partial_s \gamma_{x_s,v_s}$ is a broken $\lambda$-Jacobi field.
Since $\gamma_{x,v}$ contains no reflections with $|\langle \dot\gamma,\nu\rangle |<a$, it follows from Lemma \ref{cor:final:jac:est} and Remark \ref{rm:jac:bound} that there exists a uniform $C_1>0$ such that $|J|^2+|\dot J|^2\le C_1$ holds for all $(x,v) \in \inte(SM)$. This implies 
\begin{equation}\label{eq:thefirstpart-of-estimate}
  \left| \left. \int_0^{\tau_{x_s, v_s}} \partial_s f\left(\gamma_{x_s, v_s}(t), \dot{\gamma}_{x_s, v_s}(t)\right) \mathrm{d} t\right|_{s=0}\right|\le L C_1^{1/2} \|\nabla_{SM}f \|_{L^{\infty}(SM)}.
\end{equation}
 We now focus on the boundary term. By taking $s=0$ in \eqref{eq:tau:derivative}, we have 
 \begin{equation}\label{eq:in:b1}
     \left.\partial_s\tau_{x_s, v_s}\right|_{s=0}=\left.-\frac{\langle J, \nu\rangle}{\left\langle\dot{\gamma}_{x, v}, \nu\right\rangle}\right|_{t=\tau_{x, v}}.
 \end{equation}
 From the proof of Lemma \ref{lm:tau:smooth}, for any $(x,v)\in \inte SM$, $\left\langle\dot{\gamma}_{x, v}\left(\tau_{x, v}\right), \nu\right\rangle<0$.
 From the expression \eqref{eq:in:b1}, we need to consider two cases: $|\langle \dot\gamma_{x,v}(\tau_{x,v}),\nu \rangle|<b$ and $|\langle \dot\gamma_{x,v}(\tau_{x,v}),\nu \rangle|\ge b$ for some small enough $b>0$. We choose $b$ such that whenever $|\langle \dot\gamma_{x,v}(\tau_{x,v}),\nu \rangle|<b$, then the corresponding broken geodesic has no reflections for any $(x,v) \in \inte SM$. A choice of a very small parameter $b>0$ splits $\inte(SM)$ into two sets corresponding to the broken $\lambda$-rays to short ones which are almost tangential to $\E$ and all other broken rays since $\E$ strictly $\lambda$-convex.  \\
 \emph{The case $|\langle \dot\gamma_{x,v}(\tau_{x,v}),\nu \rangle|\ge b$.}
 Using the strict $\lambda$-convexity of $\E$, it follows that $\lambda$-geodesics intersect $\E$ transversely. This implies that $\left.\partial_s\tau_{x_s, v_s}\right|_{s=0}$ is uniformly bounded by $C_1^{1/2}/b$, which indeed shows that \begin{equation}\label{eq:first-bounded-estimate}
    \left|\left.f\left(\gamma_{x_s, v_s}\left(\tau_{x_s, v_s}\right), \dot{\gamma}_{x_s, v_s}\left(\tau_{x_s, v_s}\right)\right) \partial_s\tau_{x_s, v_s} \right|_{s=0}\right|\le \frac{C_1^{1/2}}{b} \|f\|_{L^{\infty}(SM)}.
\end{equation}\\
 \emph{The case $|\langle \dot\gamma_{x,v}(\tau_{x,v}),\nu \rangle|<b$.} By the choice of $b$, we have that $\gamma_{x,v}$ never reaches $\rR$ and corresponds to a short $\lambda$-geodesic almost tangential to $\partial M$. Since the broken ray transform vanishes, we have $f(y,w)=0$ holds for $y\in \E$ and $w\in S_y\E$.

Write $(y_s,w_s)=\left(\gamma_{x_s, v_s}\left(\tau_{x_s, v_s}\right), \dot{\gamma}_{x_s, v_s}\left(\tau_{x_s, v_s}\right)\right)$. Since $f$ is Lipschitz, for any $w\in S_{y_s}\E $, we have
\begin{equation}\label{eq:lip1}
   |f(y_s,w_s)|=|f(y_s,w_s)-f(y_s,w)|\le C|w_s-w|. 
\end{equation}
Let us express $w_s$ in terms of $\nu$ and $w$ where we choose the orientation so that $w\in S_{y_s}\E$ and $\langle w,w_s\rangle\ge 0$. Now
\begin{equation*}
w_s = \langle w_s, \nu \rangle \nu + \langle w_s, w \rangle w= \langle w_s, \nu \rangle \nu  + \sqrt{1-\langle w_s, \nu \rangle ^2} w.
\end{equation*}
 This implies 
\begin{align*}
    |w_s-w|^2\le \left(\sqrt{1-\langle w_s, \nu \rangle ^2}-1\right)^2+\langle w_s, \nu \rangle^2. 
\end{align*}
Observe that 
\begin{align*}
    \left(\sqrt{1-\left\langle w_s, \nu\right\rangle^2}-1\right)^2\le \left\langle w_s, \nu\right\rangle^2 \iff 1-\left\langle w_s, \nu\right\rangle^2\le \sqrt{1-\left\langle w_s, \nu\right\rangle^2}.
\end{align*}
But in case $-1\le \left\langle w_s, \nu\right\rangle\le 1$, we have $1-\left\langle w_s, \nu\right\rangle^2\le \sqrt{1-\left\langle w_s, \nu\right\rangle^2}$, which proves that 
\begin{equation}\label{eq:lip2}
    \left|w_s-w\right|^2\le 2 \left\langle w_s, \nu\right\rangle^2.
\end{equation}
From \eqref{eq:lip1} and \eqref{eq:lip2}, we have
\begin{equation}\label{eq:in:b0}
    \left|f\left(y_s, w_s\right)\right|\le \sqrt{2} \left|\langle w_s, \nu\rangle\right|.
\end{equation}
Using \eqref{eq:in:b1}, we can write
\begin{equation}\label{eq:reg:int:2}
    \left.\partial_s\tau_{x_s, v_s}\right|_{s=0}\le \frac{C_1^{1/2}}{|\langle \nu, w_0\rangle |}.
\end{equation}
From \eqref{eq:first-bounded-estimate}, \eqref{eq:in:b0} and \eqref{eq:reg:int:2}, we conclude for any $(x,v) \in \inte(SM)$ that
\begin{equation}\label{eq:final-bdry-estimate}
    \left|\left.f\left(\gamma_{x_s, v_s}\left(\tau_{x_s, v_s}\right), \dot{\gamma}_{x_s, v_s}\left(\tau_{x_s, v_s}\right)\right) \partial_s\tau_{x_s, v_s} \right|_{s=0}\right|\le C \|f\|_{L^{\infty}(SM)}
\end{equation}
for some $C>0$.
It follows from \eqref{eq:thefirstpart-of-estimate} and \eqref{eq:final-bdry-estimate} that $u \in W^{1,\infty}(SM)$. We conclude $u \in \mathrm{Lip}(SM)$.
\end{proof}

\section{Uniqueness for scalar functions and \texorpdfstring{$1$}{1}-forms}
\subsection{Revisiting the boundary terms in the Pestov identity}
The primitive function corresponding to $f$, denoted by $u := u^{f}$, is defined as  
\begin{equation}\label{premitiveF}
    u^{f}(x,v)= \int_{0}^{\tau(x,v)}f(\phi_t(x,v))dt
\end{equation}
 where $\phi_t$ is the broken $\lambda$-geodesic flow. 
 
In the following lemma, we provide a simplified form of the boundary term $\nabla_{T,\lambda}$ appearing in the Pestov identity \eqref{eq:Pestov3.1} in terms of the odd and even components of $u$ and the magnetic signed curvature. In \cite[Lemma 9]{Ilmavirta:Salo:2016}, a similar identity has been proved for the broken geodesic flow (i.e. when $\lambda=0$). In particular, they showed that
\begin{equation}\label{eq:lm:9:is}
\left(\nabla_T u, V u\right)_{\partial SM}=\left(\nabla_T u_e, V u_o\right)_{\partial SM}+\left(\nabla_T u_o, V u_e\right)_{\partial SM} -(\kappa V u, V u)_{\partial S M}
\end{equation}
where $\nabla_T=\nabla_{T,0}$, $\kappa := - \langle D_t T, \nu \rangle_g$ is the signed curvature of $\partial M$, $u_{e}$ and $u_{o}$ are the even and odd components of $u|_{\partial SM}$ with respect to the reflection $\rho$ and $u$ is a primitive function.
We now aim to prove the following generalization of \cite[Lemma 9]{Ilmavirta:Salo:2016} to the case of broken $\lambda$-geodesic flows. 
\begin{lemma}\label{lm:uf:eo:g}
Let $(M, g)$ be a compact Riemannian surface with smooth boundary and $\lambda \in C^\infty(SM)$.  If $u \in C^2(SM)$, then
\begin{align}\label{eq:odd:even:com:1}
\begin{split}
    \left(\nabla_{T, \lambda} u, V u\right)_{\partial SM}&= \left(\nabla_T u_e, V u_o\right)_{\partial S M}+\left(\nabla_T u_o, V u_e\right)_{\partial S M}-(\kappa V u, V u)_{\partial S M}\\&\qquad-\left(V(\mu)(\lambda V u)_e+\mu(V(\lambda) V u)_o, Vu_o\right)_{\partial S M}\\&\qquad\qquad -\left(V(\mu)(\lambda V u)_o+\mu(V(\lambda) V u)_e, Vu_e  \right)_{\partial S M}.\qedhere
\end{split}
\end{align}
\end{lemma}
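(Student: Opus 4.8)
The plan is to reduce the statement to the already established scalar identity \eqref{eq:lm:9:is} by isolating the part of $\nabla_{T,\lambda}$ that differs from $\nabla_T=\nabla_{T,0}$. Writing $F=X+\lambda V$ and using $\mu=\langle v,\nu\rangle$ together with $V\mu=-\langle v_\perp,\nu\rangle$ (which is precisely the content of the identity $T=(V\mu)X+\mu X_\perp=\nabla_{T,0}$), the definition of $\nabla_{T,\lambda}$ collapses to
\[
\nabla_{T,\lambda}=\nabla_T+\bigl[(V\mu)\lambda-\mu V(\lambda)\bigr]V .
\]
Therefore $\left(\nabla_{T,\lambda}u,Vu\right)_{\partial SM}=\left(\nabla_T u,Vu\right)_{\partial SM}+\left(\bigl[(V\mu)\lambda-\mu V(\lambda)\bigr]Vu,Vu\right)_{\partial SM}$, and I would dispatch the first summand directly with \eqref{eq:lm:9:is}, which supplies the three terms $\left(\nabla_T u_e,Vu_o\right)_{\partial SM}+\left(\nabla_T u_o,Vu_e\right)_{\partial SM}-(\kappa Vu,Vu)_{\partial SM}$. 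All the remaining work is to show that the second, purely vertical, summand equals the two mixed even/odd terms in \eqref{eq:odd:even:com:1}.

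Next I would record the structural facts governing the even/odd bookkeeping. Since $\rho$ fixes the base point and acts on each fibre $S_xM$ as an orientation-reversing isometry, it preserves the boundary measure $d\Sigma^2=dV^1\wedge dS_x$; hence $\rho$-even and $\rho$-odd functions are orthogonal with respect to $(\cdot,\cdot)_{\partial SM}$. Because $\rho$ reverses the fibre orientation it anticommutes with the rotation generator, $V(w\circ\rho)=-(Vw)\circ\rho$, so $V$ interchanges the two parities; in particular $\mu$ is $\rho$-odd while $V\mu$ is $\rho$-even, and $(Vu)_e=V(u_o)$, $(Vu)_o=V(u_e)$, consistent with Lemma \ref{lm:irhoi} and the explicit form of $\rho$. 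These parities also make transparent that the resulting identity depends only on the boundary values of $\lambda$.

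With these facts in hand I would expand $\left(\bigl[(V\mu)\lambda-\mu V(\lambda)\bigr]Vu,Vu\right)_{\partial SM}$ by decomposing the products $\lambda Vu$ and $V(\lambda)Vu$ into even and odd parts while simultaneously writing $Vu=(Vu)_e+(Vu)_o$ in the second slot. As $V\mu$ is even and $\mu$ is odd, each of the terms $(V\mu)(\lambda Vu)$ and $\mu(V(\lambda)Vu)$ has a well-determined parity in each even/odd piece, so that about half of the cross terms carry a $\rho$-odd integrand and vanish by the orthogonality above. Collecting the surviving (even-integrand) terms and regrouping them according to whether they pair against $Vu_o=(Vu)_e$ or $Vu_e=(Vu)_o$ produces exactly the two bracketed combinations $V(\mu)(\lambda Vu)_e+\mu(V(\lambda)Vu)_o$ and $V(\mu)(\lambda Vu)_o+\mu(V(\lambda)Vu)_e$ of \eqref{eq:odd:even:com:1}. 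I expect the only real obstacle to be organizational: tracking signs through the parity swap induced by $V$ and matching the four surviving terms to the correct pairing, since a single misidentification (for instance confusing $(Vu)_e$ with $Vu_e$) sends a term to the wrong slot. Adding the regrouped vertical contribution to the output of \eqref{eq:lm:9:is} then gives \eqref{eq:odd:even:com:1}.
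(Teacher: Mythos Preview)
Your approach is essentially identical to the paper's: both split $\nabla_{T,\lambda}=\nabla_T+L$ with $L$ the purely vertical piece $-V(\lambda\mu)V$, invoke the $\lambda=0$ identity \eqref{eq:lm:9:is} for the $\nabla_T$ part, and then resolve $(Lu,Vu)_{\partial SM}$ by $\rho$-parity using that $\mu$ is odd, $V\mu$ is even, $V$ swaps parity, and even and odd functions are $L^2$-orthogonal on $\partial SM$. One caution on bookkeeping: the paper works with $V\mu=+\langle v_\perp,\nu\rangle$ (so $L=-[(V\mu)\lambda+\mu V(\lambda)]V$), and with that convention your displayed formula $\nabla_{T,\lambda}=\nabla_T+[(V\mu)\lambda-\mu V(\lambda)]V$ has a sign slip in the first bracket; fixing it makes your parity computation land exactly on the two terms in \eqref{eq:odd:even:com:1}.
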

\begin{proof}
 We denote $\nabla_{T, \lambda}u=\nabla_Tu+Lu$ where $Lu:=-\langle v_{\perp},\nu\rangle \lambda V u-\langle v, \nu\rangle(V(\lambda)) Vu.$ Note that from \cite[p. 119]{GIP2D}, we have $\mu(x,v)=\langle v,\nu(x) \rangle$ and $V(\mu)(x,v)=\left\langle v_{\perp}, \nu\right\rangle$. This implies $Lu = - V(\lambda \mu) Vu$.  From \cite[p. 391]{Ilmavirta:Salo:2016}, we have $\left(\rho^* V u\right)=-V\left(\rho^* u\right)$.
We compute 
   \begin{align}
\rho^*(\mu(x,v) )&= \langle (v-2\langle v,\nu \rangle \nu ),\nu\rangle=-\langle v,\nu \rangle=-\mu(x,v)\label{eq:eo:11}\\
    \rho^*(V(\lambda \mu)(x,v))&=  -V(\rho^*(\lambda (x,v)\mu(x,v) )). \label{eq:eo:12}
\end{align}
From \eqref{eq:eo:11} and \eqref{eq:eo:12}, we have
\begin{align*}
    (Lu)_e(x,v)&=\frac{Lu(x,v)+\rho^*Lu(x,v)}{2}\\
   & =\frac{-\mu V(\lambda ) Vu-\lambda V(\mu )Vu-\rho^*(\mu V(\lambda)+V(\mu)\lambda)\rho^*Vu}{2}\\
   &= \frac{-\mu V(\lambda ) Vu-\lambda V(\mu )Vu+\mu\rho^*( V(\lambda)Vu)-V(\mu)\rho^*(\lambda Vu)}{2}\\
   &=-V(\mu)(\lambda V u)_e-\mu (V(\lambda) V u)_o
\end{align*}
and 
\begin{align*}
     (Lu)_o(x,v)&=\frac{Lu(x,v)-\rho^*Lu(x,v)}{2}\\
   & =\frac{-\mu V(\lambda ) Vu-\lambda V(\mu )Vu+\rho^*(\mu V(\lambda)+V(\mu)\lambda)\rho^*Vu}{2}\\
   &= \frac{-\mu V(\lambda ) Vu-\lambda V(\mu )Vu-\mu\rho^*( V(\lambda)Vu)+V(\mu)\rho^*(\lambda Vu)}{2}\\
     &=-V(\mu)(\lambda V u)_o-\mu(V(\lambda) V u)_e.
\end{align*}
Since $\rho$ is an isometry on $S_x$ for each $x \in \partial M$ (cf. Remark \ref{rm:odd:even}), we obtain
\begin{align*}
    &\left(L u, V u\right)_{\partial S M}= \left((L u)_e, (V u)_e\right)_{\partial S M}+\left((L u)_o, (V u)_o\right)_{\partial S M}\\
    &=\left(-V(\mu)(\lambda V u)_e-\mu(V(\lambda) V u)_o, Vu_o\right)_{\partial S M} + \left(-V(\mu)(\lambda V u)_o-\mu(V(\lambda) V u)_e, Vu_e  \right)_{\partial S M}.
\end{align*}
Combining with \eqref{eq:lm:9:is}, we have
\begin{align}
\begin{split}
   & \left(\nabla_{T, \lambda} u, V u\right)_{\partial SM}= \left(\nabla_T u_e, V u_o\right)_{\partial S M}+\left(\nabla_T u_o, V u_e\right)_{\partial S M}-(\kappa V u, V u)_{\partial S M}\\&\quad-\left(V(\mu)(\lambda V u)_e+\mu(V(\lambda) V u)_o, Vu_o\right)_{\partial S M} -\left(V(\mu)(\lambda V u)_o+\mu(V(\lambda) V u)_e, Vu_e  \right)_{\partial S M}.\qedhere
\end{split}
\end{align}
\end{proof}

\begin{remark}\label{rm:odd:even}
   Notice that $\rho$ is an isometry on $S_x$ for each $x \in \partial M$.
   The even and odd parts of $u$ are denoted by $u_e$ and $u_0$ respectively with respect to the isometry $\rho$. Similarly, $v_e$ and $v_o$ stands for the even and odd parts of $v$ respectively with respect to the isometry $\rho$. Then 
    \begin{align*}
        (u_e,v_o)_{\partial SM}&=\left(\frac{u+u\circ \rho}{2},\frac{v-v\circ \rho}{2} \right)_{\partial SM}\\&=\frac{1}{4}\left\{(u,v)_{\partial SM}+(u\circ \rho,v)_{\partial SM}-(u,v\circ \rho)_{\partial SM}-(u\circ \rho,v\circ \rho)_{\partial SM}\right\}\\&
        =\frac{1}{4}\left\{(u\circ \rho,v)_{\partial SM}-(u,v\circ \rho)_{\partial SM}\right\},
    \end{align*}
    where we used the fact that $\rho$ is an isometry. Similarly,
    \begin{align*}
(u_o,v_e)_{\partial SM}&=\left(\frac{u-u\circ \rho}{2},\frac{v+v\circ \rho}{2} \right)_{\partial SM}\\&=\frac{1}{4}\left\{(u,v)_{\partial SM}-(u\circ \rho,v)_{\partial SM}+(u,v\circ \rho)_{\partial SM}-(u\circ \rho,v\circ \rho)_{\partial SM}\right\}\\&
        =\frac{1}{4}\left\{-(u\circ \rho,v)_{\partial SM}+(u,v\circ \rho)_{\partial SM}\right\}.
    \end{align*}
    This implies 
    \begin{align*}
        (u_e,v_o)_{\partial SM}+(u_o,v_e)_{\partial SM}=0,
    \end{align*}
    and in particular, we have 
    \begin{align*}
        (u,v)_{\partial SM}=(u_e+u_o,v_e+v_o)_{\partial SM}=(u_e,v_e)_{\partial SM}+(u_o,v_o)_{\partial SM}.
    \end{align*}
\end{remark}
\begin{corollary}\label{cor:odd:even:simply}
    Let $(M, g)$ be a compact Riemannian surface with smooth boundary and $\lambda \in C^{\infty}(S M)$. If $u \in C^2(S M)$ and $u=u\circ \rho$ on $\partial SM$, then 
    \begin{equation}\label{eq:reduced:even:odd}
      \left(\nabla_{T, \lambda} u, V u\right)_{\partial S M}=-((\kappa_{\lambda}+\eta_{\lambda})_e V u, V u)_{\partial S M},
    \end{equation}
    where $\kappa_\lambda(x, v)=\kappa-\langle\nu(x), \lambda(x, v) i v\rangle$ and $\eta_\lambda(x, v)=\langle V(\lambda)(x, v) v, \nu\rangle$.
\end{corollary}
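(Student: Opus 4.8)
The plan is to specialize Lemma \ref{lm:uf:eo:g} to a $\rho$-even primitive $u$, and it is cleanest to argue directly from the decomposition $\nabla_{T,\lambda}u=\nabla_T u+Lu$ introduced in its proof, where $Lu=-V(\lambda\mu)Vu$. The crucial first observation is a parity fact. The hypothesis $u=u\circ\rho$ on $\partial SM$ says that $u$ is $\rho$-even, so $u_e=u$ and $u_o=0$. Combining this with the relation $\rho^*Vu=-V(\rho^*u)$ recorded in the proof of Lemma \ref{lm:uf:eo:g}, we get $\rho^*(Vu)=-Vu$, i.e. $Vu$ is $\rho$-odd. Thus $V$ interchanges $\rho$-even and $\rho$-odd functions, and this single fact drives the whole computation.

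Next I would treat the two pieces of $\nabla_{T,\lambda}u$ separately. For the $\nabla_T$ term I apply the Ilmavirta--Salo identity \eqref{eq:lm:9:is}: since $u_o=0$, both mixed terms $(\nabla_T u_e,Vu_o)_{\partial SM}$ and $(\nabla_T u_o,Vu_e)_{\partial SM}$ vanish, leaving $(\nabla_T u,Vu)_{\partial SM}=-(\kappa Vu,Vu)_{\partial SM}$. For the $L$ term I simply use $Lu=-V(\lambda\mu)Vu$ to get $(Lu,Vu)_{\partial SM}=-(V(\lambda\mu)Vu,Vu)_{\partial SM}$. Adding the two gives
\begin{equation*}
(\nabla_{T,\lambda}u,Vu)_{\partial SM}=-\bigl((\kappa+V(\lambda\mu))Vu,Vu\bigr)_{\partial SM}.
\end{equation*}

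The remaining step is a short identification together with a parity reduction. Using $\mu(x,v)=\langle v,\nu\rangle$ and $V(\mu)(x,v)=\langle v_\perp,\nu\rangle$ and expanding the product, I find $V(\lambda\mu)=\lambda V(\mu)+\mu V(\lambda)=\langle v_\perp,\nu\rangle\lambda+\langle V(\lambda)v,\nu\rangle$, which by the definitions \eqref{eq:kappa:lam} and \eqref{eq:eta:lam} equals $(\kappa_\lambda-\kappa)+\eta_\lambda$. Hence $\kappa+V(\lambda\mu)=\kappa_\lambda+\eta_\lambda$, and the displayed identity becomes $-((\kappa_\lambda+\eta_\lambda)Vu,Vu)_{\partial SM}$. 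Finally, since $Vu$ is $\rho$-odd the weight $Vu$ may be replaced by its interaction with the even part of $\kappa_\lambda+\eta_\lambda$ only: writing $\phi:=\kappa_\lambda+\eta_\lambda$, the product rule $(\phi Vu)_o=\phi_e(Vu)_o+\phi_o(Vu)_e=\phi_e Vu$ together with Remark \ref{rm:odd:even} (which gives $(\phi Vu,Vu)_{\partial SM}=((\phi Vu)_o,Vu)_{\partial SM}$ because $Vu$ is odd) yields $((\kappa_\lambda+\eta_\lambda)Vu,Vu)_{\partial SM}=((\kappa_\lambda+\eta_\lambda)_e Vu,Vu)_{\partial SM}$. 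This is exactly \eqref{eq:reduced:even:odd}.

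The only genuinely delicate point is the parity bookkeeping. One must keep track of the fact that $V$ swaps $\rho$-even and $\rho$-odd functions (so $Vu$ is odd precisely because $u$ is even), that $\kappa$ and $V(\mu)$ are $\rho$-even while $\mu$ is $\rho$-odd, and that pairing an odd weight against the even function $(Vu)^2$ over the $\rho$-invariant measure on $\partial SM$ produces zero. Once these parities are in place, everything reduces to a direct substitution into Lemma \ref{lm:uf:eo:g} and the identity \eqref{eq:lm:9:is}.
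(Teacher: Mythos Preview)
Your argument is correct. The parity bookkeeping is sound: $u_o=0$ kills the mixed terms in \eqref{eq:lm:9:is}, the identification $\kappa+V(\lambda\mu)=\kappa_\lambda+\eta_\lambda$ is exactly the content of the definitions \eqref{eq:kappa:lam}--\eqref{eq:eta:lam}, and the final reduction to $(\kappa_\lambda+\eta_\lambda)_e$ via the oddness of $Vu$ and Remark~\ref{rm:odd:even} is clean.

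The route differs from the paper's. The paper specializes the full conclusion \eqref{eq:odd:even:com:1} of Lemma~\ref{lm:uf:eo:g}: it first computes $(\lambda Vu)_e,\ (\lambda Vu)_o,\ (V(\lambda)Vu)_e,\ (V(\lambda)Vu)_o$ under $u=u\circ\rho$, substitutes these into the five-term identity, and then invokes Lemma~\ref{lm:property:rho:kappa:eta} to repackage the result as $(\kappa_\lambda+\eta_\lambda)_e$. You instead go back to the ingredients of that lemma (the split $\nabla_{T,\lambda}=\nabla_T+L$ and the Ilmavirta--Salo identity \eqref{eq:lm:9:is}), compute $(Lu,Vu)_{\partial SM}$ directly as $-(V(\lambda\mu)Vu,Vu)_{\partial SM}$, recognize the scalar weight as $\kappa_\lambda+\eta_\lambda$, and only then perform a single parity reduction on that weight. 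Your path is shorter and avoids both the general even/odd expansion of Lemma~\ref{lm:uf:eo:g} and the separate Lemma~\ref{lm:property:rho:kappa:eta}; the paper's path is more modular in that the corollary becomes a pure substitution into an already-proved general formula.
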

\begin{proof}
 We have
\begin{align}
      \rho^*(\lambda (x,v) V(u )(x,v) )&=-(\rho^*\lambda) V(\rho^*u)(x,v), \label{eq:eo:13}\\
      \rho^*(V(\lambda)(x,v) Vu(x,v) )&= V(\rho^*\lambda)V(\rho^*u)(x,v).\label{eq:eo:14}
\end{align}
By the assumption on $u$, we have $u_e=u$ on $\partial SM$ and $u_o=0$ on $\partial SM$.
   Combining \eqref{eq:eo:13} and \eqref{eq:eo:14}, we obtain
    \begin{align}
        (\lambda V u)_e(x,v)&= \frac{\lambda V u (x,v)-\left(\rho^* \lambda\right) V\left(\rho^* u\right)(x, v)}{2}= \lambda_o  Vu_e(x,v)\\
        (\lambda V u)_o(x,v)&= \frac{\lambda V u (x,v)+\left(\rho^* \lambda\right) V\left(\rho^* u\right)(x, v)}{2}= \lambda_e Vu_e(x,v)\\
        (V(\lambda) V u)_e&=\frac{V(\lambda)(x, v) V u(x, v)+V\left(\rho^* \lambda\right) V\left(\rho^* u\right)(x, v)}{2}= V(\lambda_e) Vu_e (x,v)\\ (V(\lambda) V u)_o&=\frac{V(\lambda)(x, v) V u(x, v)-V\left(\rho^* \lambda\right) V\left(\rho^* u\right)(x, v)}{2}= V(\lambda_o) Vu_e (x,v)
    \end{align}
   From \eqref{eq:odd:even:com:1} and Lemma \ref{lm:property:rho:kappa:eta},  we have
    \begin{align*}
        \left(\nabla_{T, \lambda} u, V u\right)_{\partial S M}&=-(\kappa V u, V u)_{\partial S M} -\left(V(\mu)\lambda_e V u_e+\mu V\left(\lambda_e\right) V u_e, V u_e\right)_{\partial SM}\\&=-(\kappa V u, V u)_{\partial S M} +\left(-\left\langle v_{\perp}, \nu\right\rangle \lambda_e V u_e-\langle v, \nu\rangle V\left(\lambda_e\right) V u_e, V u_e\right)_{\partial SM}\\
        &=-(\kappa V u, V u)_{\partial S M} +\left(\left\langle \lambda_e iv, \nu\right\rangle  V u-\langle V\left(\lambda_e\right) v, \nu\rangle  V u, V u\right)_{\partial SM}\\
        &=-\left(\left(\kappa_\lambda+\eta_\lambda\right)_e V u, V u\right)_{\partial S M}.\qedhere
    \end{align*}
\end{proof}

\noindent Proposition \ref{lm:pestov:g} and Corollary \ref{cor:odd:even:simply} now lead to the Pestov identity 
\begin{equation}\label{eq:pestov:1}
\begin{split}
\|P u\|_{SM}^2&=\|\widetilde{P} u\|_{SM}^2+\|(X+\lambda V) u\|_{SM}^2-\left(K_\lambda V u, V u\right)_{S M}\\
& \qquad\qquad -\left(((\kappa_{\lambda})_e+(\eta_{\lambda})_e) V u, V u\right)_{\partial SM},\end{split}
\end{equation}
for all $u\in C^2(SM)$ with $u \circ \rho = u$ on $\partial SM$.
The important point to note here is that the 
regularity of the solution $u$ to the transport equation is $C^2(\operatorname{int}(SM))\cap \operatorname{Lip}(SM)$ by Lemma \ref{lm:reg:u}. We need to prove the Pestov identity \eqref{eq:pestov:1} for this class of functions. To overcome this difficulty, we use an approximation argument following \cite[pp. 1289-1290]{Ilmavirta:Paternain:2022}.
\begin{lemma}\label{lm:pestov:eo}
Let $(M,g)$ be a compact Riemannian surface with smooth boundary and $\lambda \in C^\infty(SM)$. If $u\in C^2(\operatorname{int}(SM))$ $\cap \operatorname{Lip}(SM)$, $Pu\in L^2(SM)$, and $u=u\circ \rho$ on $\partial SM$ then
\begin{align}\label{eq:pestov:eo}
\begin{split}
        \|P u\|_{S M}^2&=\|\widetilde{P} u\|_{S M}^2+\|F u\|_{S M}^2-\left(K_\lambda V u, V u\right)_{S M}\\&\qquad-\left(((\kappa_{\lambda})_e+(\eta_{\lambda})_e) V u, V u\right)_{\partial S M}.
        \end{split}
\end{align}
 \end{lemma}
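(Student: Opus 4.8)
The plan is to derive \eqref{eq:pestov:eo} from the $C^2$ Pestov identity \eqref{eq:pestov:1} by approximation, so I first check that every term in \eqref{eq:pestov:eo} is finite for the given $u$. Since $u \in \operatorname{Lip}(SM)$, Rademacher's theorem gives $Vu, X_{\perp}u, Fu \in L^{\infty}(SM) \subset L^2(SM)$, while $Pu \in L^2(SM)$ holds by hypothesis. The twisted commutator $[V,F] = -X_{\perp} + V(\lambda)V$ from \eqref{eq: twisted-commutators} gives $\widetilde{P}u = FVu = VFu + X_{\perp}u - V(\lambda)Vu = Pu + X_{\perp}u - V(\lambda)Vu$, so $\widetilde{P}u \in L^2(SM)$ as well. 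Finally $V$ is tangent to $\partial SM$ (consistent with the boundary-term-free identity $(Vu,w)_{SM} = -(u,Vw)_{SM}$ in \eqref{eq:int:parts}), so $Vu|_{\partial SM}$ is the a.e.-defined bounded tangential derivative of the Lipschitz function $u|_{\partial SM}$; together with $(\kappa_{\lambda})_e + (\eta_{\lambda})_e \in L^{\infty}(\partial SM)$ this makes the boundary term well-defined. Note that writing the identity in the reduced form \eqref{eq:pestov:eo} is essential here: the unreduced boundary term $(\nabla_{T,\lambda}u, Vu)_{\partial SM}$ contains the non-tangential derivative $X_{\perp}u$, which need not admit a trace for merely Lipschitz $u$, whereas the even-curvature form only involves $Vu$.

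Next I would construct smooth approximants $u_j \in C^2(SM)$ satisfying the reflection symmetry $u_j = u_j \circ \rho$ on $\partial SM$, with $Vu_j \to Vu$ and $Fu_j \to Fu$ in $L^2(SM)$, $Pu_j \to Pu$ and $\widetilde{P}u_j \to \widetilde{P}u$ in $L^2(SM)$, and $Vu_j \to Vu$ in $L^2(\partial SM)$. Working in boundary normal coordinates on a collar of $\partial SM$ and mollifying by the product of a tangential mollifier (in the $\partial SM$-directions, which include the flow of $V$) and a one-sided normal mollifier produces smooth functions converging to $u$ in $W^{1,2}(SM)$ and smooth up to the boundary. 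Because the mollification on $\partial SM$ is purely tangential, $Vu_j|_{\partial SM}$ is the tangential mollification of $Vu|_{\partial SM} \in L^{\infty}(\partial SM)$ and hence converges in $L^2(\partial SM)$. To retain the identity $u_j = u_j\circ \rho$ on $\partial SM$, I would symmetrize the collar approximant with respect to the fibrewise isometry $\rho$ (extended to the collar by $\tilde\rho(r,\zeta)=(r,\rho(\zeta))$ and averaging), which costs nothing since $u = u\circ \rho$ already holds on $\partial SM$.

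The main obstacle is the second-order convergence $Pu_j \to Pu$ in $L^2(SM)$ from the weak hypothesis $Pu \in L^2(SM)$ together with only $u \in W^{1,\infty}$. This is precisely the content of Friedrichs' commutator lemma: since $P = V(X+\lambda V)$ has smooth coefficients and $u \in W^{1,\infty}$, the commutator between $P$ and the mollification is controlled by the (bounded) first derivatives of $u$ multiplied by derivatives of the mollifier and tends to $0$ in $L^2$, so that $Pu_j$ equals a mollification of $Pu$ up to an error vanishing in $L^2$. Carrying these commutator estimates up to $\partial SM$ with the one-sided normal mollifier requires the standard care, and is the technical heart of the argument. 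The corresponding convergence $\widetilde{P}u_j \to \widetilde{P}u$ then follows from $\widetilde{P}u_j = Pu_j + X_{\perp}u_j - V(\lambda)Vu_j$ together with the first-order convergences.

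To conclude, I would apply \eqref{eq:pestov:1} to each $u_j$ — equivalently, apply the generalized Pestov identity of Proposition \ref{lm:pestov:g} followed by Corollary \ref{cor:odd:even:simply}, both valid for the $C^2$ approximants precisely because $u_j = u_j \circ \rho$ on $\partial SM$ — and then pass to the limit $j \to \infty$. The volume terms converge by the $L^2$ convergences of $Pu_j, \widetilde{P}u_j, Fu_j$ and $Vu_j$ (using $K_{\lambda} \in L^{\infty}(SM)$), and the boundary term converges because $Vu_j \to Vu$ in $L^2(\partial SM)$ and $(\kappa_{\lambda})_e + (\eta_{\lambda})_e \in L^{\infty}(\partial SM)$. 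This produces \eqref{eq:pestov:eo} for $u \in C^2(\operatorname{int}(SM)) \cap \operatorname{Lip}(SM)$ with $Pu \in L^2(SM)$ and $u = u\circ\rho$ on $\partial SM$, as claimed.
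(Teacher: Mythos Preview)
Your overall strategy—approximate by smooth functions, apply the $C^2$ identity, pass to the limit—matches the paper's, and your preliminary paragraph checking that every term in \eqref{eq:pestov:eo} is finite (in particular the clean observation $\widetilde{P}u=Pu+X_\perp u-V(\lambda)Vu\in L^2$) is actually sharper than what the paper writes. Two technical choices, however, differ from the paper's route.

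First, the paper does not mollify on $SM$ with one–sided normal mollifiers. Following \cite{Ilmavirta:Paternain:2022}, it embeds $M$ into a larger compact manifold $\widetilde M$ with $M\subset\operatorname{int}\widetilde M$, extends $u$ to a compactly supported Lipschitz function $\tilde u$ on $S\widetilde M$, and mollifies there by ordinary convolution. Restricting to $SM$ gives $u^j\in C^\infty(SM)$, and the Friedrichs-type argument for $Pu^j\to Pu$ becomes a purely interior statement on $S\widetilde M$, avoiding the boundary commutator issues you flag as ``the technical heart''.

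Second—and this is where your sketch has a genuine soft spot—the paper does \emph{not} symmetrize the approximants to enforce $u^j=u^j\circ\rho$ on $\partial SM$. Your averaging $u_j\mapsto\tfrac12(u_j+u_j\circ\tilde\rho)$ in a collar is problematic: in the collar interior $u\neq u\circ\tilde\rho$ in general, so the symmetrized function no longer approximates $u$ there, and it is unclear that $P\tilde u_j\to Pu$ survives, since $P(u_j\circ\tilde\rho)$ brings in the pulled-back operator $\tilde\rho^*P\neq P$. The paper sidesteps this entirely by applying the \emph{unreduced} boundary decomposition of Lemma~\ref{lm:uf:eo:g} (rather than Corollary~\ref{cor:odd:even:simply}) to each $u^j$. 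This produces the extra cross terms
\[
(\nabla_{T,\lambda}u^j_e,\,Vu^j_o)_{\partial SM}+(\nabla_{T,\lambda}u^j_o,\,Vu^j_e)_{\partial SM},
\]
which vanish in the limit because $u^j\to u$ in $H^1(\partial SM)$ forces $u^j_o\to u_o=0$ there, while the first-order factors stay uniformly bounded by the Lipschitz control. Only the surviving term $-(((\kappa_\lambda)_e+(\eta_\lambda)_e)Vu^j,Vu^j)_{\partial SM}$ remains, and it passes to the limit as you describe. If you drop the symmetrization and argue this way instead, your proof goes through with essentially the same effort.
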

\begin{proof}
Following the approach taken in the proof of \cite[Lemma 10]{Ilmavirta:Paternain:2022},
we extend our manifold as follows: Let $\widetilde{M}$ be a smooth and compact Riemannian manifold with boundary, such that $M \subset \operatorname{int} \widetilde{M}$. We extend the function $u$ to a new function $\tilde u: S \widetilde{M} \rightarrow \mathbb{R}$ such that $\tilde u$ satisfies $\tilde u=u$ in $SM$, $\tilde u \in C^2(\operatorname{int} S M) \cap \operatorname{Lip}(S \widetilde{M})$ and has compact support in $\operatorname{int} S \widetilde{M}$.

Following the proof of \cite[Lemma 10]{Ilmavirta:Paternain:2022}, we define a sequence of mollifications $\left(u^j\right)_{j=1}^{\infty}$ of $\tilde u$. By the basic properties of mollifiers, we have $u^j \rightarrow u$ in $\operatorname{Lip}(SM)$ and $C^2(\inte SM)$. By applying Lemma \ref{lm:pestov:g} and Lemma \ref{lm:uf:eo:g} to $u^j|_{SM}$, we obtain the following expression
\begin{equation}\label{eq:int:pestov:1}
  \begin{split}
        \|P u^j\|_{SM}^2&=\|\widetilde{P} u^j\|_{SM}^2+\|F u^j\|_{SM}^2-\left(K_\lambda V u^j, V u^j\right)_{S M}\\
&\qquad+\left(\nabla_{T, \lambda} u^j_e, V u^j_o\right)_{\partial S M}+\left(\nabla_{T, \lambda} u^j_o, V u^j_e\right)_{\partial S M}\\
&\qquad\qquad-(((\kappa_{\lambda})_e+(\eta_{\lambda})_e)Vu^j,Vu^j )_{\pi^{-1}  \rR}.
  \end{split}
\end{equation}
Here, we have defined $u_e^j=\frac{1}{2}\left(u^j+u^j \circ \rho\right)$ and $u_o^j=\frac{1}{2}\left(u^j-u^j \circ \rho\right)$, and we have used the fact that $u \circ \rho= u$ at $\partial S M$ by assumption.

Note that $\operatorname{Lip}(\partial SM)\subset H^1(\partial SM)$ and $u^j\to u$ in $\operatorname{Lip}(\partial SM)$. Therefore, the convergence also holds in $H^1(\partial SM)$. Similar to the proof of \cite[Lemma 10]{Ilmavirta:Paternain:2022}, we can conclude from $H^1$ convergence in $SM$ that $Fu^j\to Fu$ and $Vu^j\to Vu$ in $L^2(SM)$. We have by the properties of mollification and regularity assumptions on $u$ that $Pu^j \to Pu$ in $L^2(SM)$. Since the all other terms but $\norm{\widetilde{P}u^j}$ in \eqref{eq:int:pestov:1} are known to converge as $j \to \infty$, we may conclude that $\lim_{j\to \infty}\norm{\widetilde{P}u^j}$ exists and is finite. Using the commutator formula \eqref{eq: twisted-commutators}, we have $\widetilde{P}u^j=Pu^j+X_{\perp}u^j-V(\lambda)Vu^j$. We know that $Pu^j \to Pu$ in $L^2(SM)$ by assumption and, on the other hand, $X_{\perp}u^j \to X_{\perp}u$ and $V(\lambda)Vu^j \to V(\lambda)Vu$ in $L^2(SM)$ since $u \in \mathrm{Lip}(SM) \subset H^1(SM)$ and $\lambda \in C^\infty(SM)$. This shows that $\widetilde{P}u^j \to \widetilde{P}u$ in $L^2(SM)$.

By combining all of the above facts about the convergence of terms, we obtain \eqref{eq:pestov:eo} by taking the limit $j \to \infty$.
\end{proof}
\subsection{Proof of Theorem \ref{thm: main theorem 1}}
Let us write $f=f_{-1}+f_0+f_1$ where $f_j\in H_j$ for $-1\le j\le 1$, as defined in \eqref{eq:GK-eig}. Furthermore, let $(f_{-1}+f_1)(x,v)=\alpha_x(v)$. It follows from the definition of $u$ in \eqref{eq:u:def} that $Fu = -f$ in the interior of $SM$ as stated in \eqref{eq:trans:1}.
By Lemma \ref{lm:reg:u} and the identity $VFu=-Vf \in C^1(SM)$, we know that $u$ satisfies the assumptions of Lemma \ref{lm:pestov:eo}.

From \eqref{eq:GK-eig} and the orthogonality \eqref{eq:orthogonal}, we obtain $VFu = if_1 - if_{-1}$ in $\inte(SM)$ with the identity
\begin{equation}\label{eq:decomposition for proof}
    \|V F u\|_{SM}^2=\|V f\|_{SM}^2=\|if_1-if_{-1}\|_{SM}^2=\|f_1\|_{SM}^2+\|f_{-1}\|_{SM}^2.
\end{equation} 
Combining the Pestov identity \eqref{eq:pestov:eo} with \eqref{eq:decomposition for proof}, we have
\[
\begin{split}
\|f_1\|_{SM}^2+\|f_{-1}\|_{SM}^2&=\|\widetilde{P} u\|_{S M}^2+\|f\|_{SM}^2-\left(K_\lambda V u, V u\right)_{S M} \\
& \qquad-\left(((\kappa_{\lambda})_e+(\eta_{\lambda})_e) V u, V u\right)_{\partial SM}.
\end{split}
\]
We may simplify further to obtain that
\[
\begin{split}
      &0=\|\widetilde{P} u\|_{S M}^2+\|f_0\|_{S M}^2-\left(K_\lambda V u, V u\right)_{S M}\\
      &\qquad -\left(((\kappa_{\lambda})_e+(\eta_{\lambda})_e) V u, V u\right)_{\partial SM}.
\end{split}
\]
Since $(M,g,\lambda,\E)$ is admissible in the sense of Definition \ref{def:admis:lam:brok}, we have that $(\kappa_{\lambda_e} + \eta_{\lambda_e}) \le 0$ and $K_{\lambda} \le 0$. In other words, each term on the right-hand side of the above equation is individually nonpositive. Since the sum of these terms vanishes, it follows that each individual term must be zero. Consequently, we deduce that $FVu = 0$ and $f_0=0$. Since the dynamical system is nontrapping and $Vu$ is constant along the $\lambda$-geodesic flow, this implies that $Vu=0$ by the boundary condition $u|_{\pi^{-1}\E}=0$. In conclusion, $u$ is independent of the vertical variable $v$. Consequently, we have
\[f=f_1+f_{-1}=F(-u)=(X+\lambda V)(-u)=-Xu.
\]
Furthermore, $u=-\pi^*h$ for some $h:M\to \mathbb R$ satisfying $dh=\alpha$. 
Since $\alpha$ is an exact $1$-form with $C^2$ coefficients and $h$ itself is continuous, we may conclude that $h \in C^3(M)$.\qed
\appendix

\section{Geometry of twisted geodesic flows on surfaces}
In this appendix, for the sake of completeness, we develop some basic theory for the twisted geodesic flows. Most of these properties and lemmas are used in the article. This includes the Pestov energy identities with boundary terms, some properties of $\lambda$-Jacobi fields, the time-reversed flows (called the dual flows), and a discussion on different notions of curvature and convexity. We remark that our discussion complements those of \cite{Assylbekov:Dairbekov:2018,Dairbekov:Paternain:2007:Entropy1,Zhang:2023}. In particular, our Proposition \ref{lm:pestov:g} can be seen as a special case of \cite[Theorem 2.3]{Assylbekov:Dairbekov:2018}, using a slightly different approach and notation. We also remark that $\lambda$-Jacobi fields on surfaces were already introduced and studied in the context of $\lambda$-conjugate points in \cite{Assylbekov:Dairbekov:2018,Zhang:2023}. However, Lemma \ref{lm:lambda:jac:var}, the implications of it and the growth estimate of Lemma \ref{lm:Jacobi:est} do not appear in these works. On the other hand, the dual flow, in Section \ref{subsec:dualflows}, is only introduced and applied in our work.

\subsection{Proof of Proposition \ref{lm:pestov:g}} \label{sec:proof-of-pestov}
We first assume that $u \in C^{\infty}(SM)$. Then the proof of proposition for $C^2(SM)$ follows by the density of $C^{\infty}(SM)$ in $C^2(SM)$. Using the commutator formulas \eqref{eq: twisted-commutators}, for any smooth function $u:SM \rightarrow\mathbb{R}$ one has 
\begin{equation}\label{eq:int:1}
\begin{split}
    & -2(X_{\perp}u\cdot V(X+\lambda V)u) \\
     &\qquad \qquad= ((X+\lambda V)u)^2+(X_{\perp}u)^2-(K+X_{\perp}(\lambda)+\lambda^2 )(Vu)^2\\
   &\qquad\qquad\qquad -(X+\lambda V+V(\lambda))(X_{\perp}u\cdot Vu )+X_{\perp}((X+\lambda V)u\cdot Vu) \\
   &\qquad\qquad\qquad\qquad-V((X+\lambda V)u\cdot X_{\perp}u
   ).
\end{split}
\end{equation}
See \cite[Lemma 3.1]{Dairbekov:Paternain:2007:Entropy1} for the details.
Integrating the identity \eqref{eq:int:1} over $SM$, we obtain
\begin{align*}
   &-2\int_{SM}(X_{\perp}u\cdot V(X+\lambda V)u) d \Sigma^3 \\
   &\quad\qquad=\int_{SM}((X+\lambda V)u)^2 d \Sigma^3+ \int_{SM}(X_{\perp}u)^2d \Sigma^3\\
    &\qquad\qquad - \int_{SM}(K+X_{\perp}(\lambda)+\lambda^2 )(Vu)^2d \Sigma^3-
    \int_{SM}  (X+\lambda V+V(\lambda))(X_{\perp}u\cdot Vu )d \Sigma^3\\
    &\qquad\qquad \qquad+\int_{SM}X_{\perp}((X+\lambda V)u\cdot Vu) d \Sigma^3- \int_{SM}V((X+\lambda V)u\cdot X_{\perp}u 
   )d \Sigma^3.
\end{align*} 
Using the integration by parts formulas \eqref{eq:int:parts}, we have
\begin{align*}
    \int_{SM}  (X+\lambda V+V(\lambda))(X_{\perp}u\cdot Vu )d \Sigma^3&=- \int_{\partial SM}\langle v,\nu\rangle(X_{\perp}u\cdot Vu )d \Sigma^2,\\
    \int_{SM}(X_{\perp})((X+\lambda V)u\cdot Vu) d \Sigma^3&=-\int_{\partial SM} \langle v_{\perp},\nu\rangle ((X+\lambda V)u\cdot Vu)d \Sigma^2,\\
    \int_{SM}V((X+\lambda V)u\cdot X_{\perp}u 
   )d \Sigma^3&=0.
\end{align*}
This implies
\begin{align}\label{eq:int:2}
\begin{split}
     &-2\int_{SM}(X_{\perp}u)\cdot V(X+\lambda V)u d \Sigma^3\\
     &\qquad=\int_{SM}((X+\lambda V)u)^2 d \Sigma^3+ \int_{SM}(X_{\perp}u)^2d \Sigma^3 - \int_{SM}(K+X_{\perp}(\lambda)+\lambda^2 )(Vu)^2d \Sigma^3\\
     &\qquad\qquad+\int_{\partial SM}\langle v,\nu\rangle(X_{\perp}u\cdot Vu )d \Sigma^2 -\int_{\partial SM} \langle v_{\perp},\nu\rangle ((X+\lambda V)u\cdot Vu)d \Sigma^2.
    \end{split}
\end{align}
Applying the integrating by parts formula once again, we have
\begin{align}\label{eq:int:12}
   \begin{split}
       & \int_{SM} (X+\lambda V)(V(\lambda)(Vu)^2)d \Sigma^3\\
       &\qquad=-\int_{SM}(V(\lambda))^2(Vu)^2d \Sigma^3-\int_{\partial SM} \langle v,\nu\rangle (V(\lambda))(Vu)^2 d \Sigma^2.
   \end{split}
\end{align}
An identity similar to \cite[p. 538]{Dairbekov:Paternain:2007:Entropy1}
can be obtained using the commutator relations (as illustrated in \eqref{eq: twisted-commutators}), as follows
\[
\begin{split}
    ((X+\lambda V)Vu)^2
    &=(V(X+\lambda V)u)^2+(X_{\perp}u)^2-(V(\lambda ))^2(Vu)^2
    +2V(X+\lambda V)u\cdot X_{\perp}u \\
     &\qquad -(X+\lambda V)((V(\lambda))(Vu)^2)+(Vu)^2(X+\lambda V)(V(\lambda)).
\end{split}
\]
We now integrate the above equation over $SM$ to get
\begin{align}\label{eq:int:3}
    \begin{split}
      & -2\int_{SM} X_{\perp}u\cdot V(X+\lambda V)u d \Sigma^3\\
       &\qquad \qquad= -\int_{SM}((X+\lambda V)Vu)^2  d \Sigma^3 
    + \int_{SM}  (V(X+\lambda V)u)^2  d \Sigma^3
       \quad + \int_{SM}(X_{\perp}u)^2 d \Sigma^3  \\  & \qquad\qquad\qquad+ \int_{SM} ((X+\lambda V)(V(\lambda))(Vu)^2d \Sigma^3
     +\int_{\partial SM} \langle v,\nu\rangle (V(\lambda))(Vu)^2 d \Sigma^2.
    \end{split}
\end{align}
Notice that the identity above is a generalization of \cite[eq.(14)]{Dairbekov:Paternain:2007:Entropy1} with the boundary term. Combining \eqref{eq:int:2} with \eqref{eq:int:3} yields 
\[
\begin{split}
   & \int_{SM}((X+\lambda V)Vu)^2  d \Sigma^3  - \int_{SM}(K+X_{\perp}(\lambda)+\lambda^2 +(X+\lambda V)V(\lambda))(Vu)^2d \Sigma^3\\
    &\qquad= \int_{SM}  (V(X+\lambda V)u)^2  d \Sigma^3
     -\int_{SM}((X+\lambda V)u)^2 d \Sigma^3 +\int_{\partial SM} \langle v,\nu\rangle (V(\lambda))(Vu)^2 d \Sigma^2\\
     &\qquad\qquad+\int_{\partial SM} \langle v_{\perp},\nu\rangle ((X+\lambda V)u\cdot Vu)d \Sigma^2-\int_{\partial SM}\langle v,\nu\rangle(X_{\perp}u\cdot Vu )d \Sigma^2.
\end{split}
\]
Hence, we have
\[
\begin{split}
    \|Pu\|_{SM}^2&= \|\widetilde{P}u \|_{SM}^2+ \|Fu\|_{SM}^2 -(K_{\lambda}Vu,Vu )_{SM}
  -( \langle v_{\perp},\nu\rangle Fu, Vu)_{\partial SM} \\ &\quad-(\langle v,\nu\rangle (V(\lambda))Vu,Vu )_{\partial SM} +( \langle v,\nu\rangle(X_{\perp}u, Vu ))_{\partial SM}.
\end{split}
\]
This is precisely the assertion of the proposition.
\qed
\subsection{Jacobi fields for \texorpdfstring{$\lambda$}{lambda}-geodesics}\label{sec: lambda-jacobi-fields} Jacobi fields are understood by means of a variation of a family of geodesics (see e.g. \cite[Chapter 10]{Lee:riem:2nd} or \cite[Section 3.7.1]{GIP2D}). In this subsection, we provide a detailed exposition of the Jacobi field for $\lambda$-geodesic flows.
Consider a smooth one-parameter family of $\lambda$-geodesics $\left(\gamma_s\right)_{s \in(-\varepsilon, \varepsilon)}$.
 We say that the family $\left(\gamma_s\right)_{s \in(-\varepsilon, \varepsilon)}$ is a variation of $\gamma$ through $\lambda$-geodesics if each $\gamma_s:[a, b] \rightarrow M$ is a $\lambda$-geodesic and $\gamma_0=\gamma$. We denote by $D_t = \nabla_{\dot\gamma(t)}$ the covariant derivative along the curve $\gamma(t)$.
 
It is well-known that the variation field of a geodesic satisfies the Jacobi equation which is demonstrated in \cite[Lemma 3.7.2]{GIP2D}, \cite[Theorem 10.1]{Lee:riem:2nd}. Furthermore, the magnetic Jacobi fields are known to satisfy the equation \eqref{eq:jac:var:lambda} for $\lambda \in C^{\infty}(M)$ (cf. \cite[eq. (A.7)]{Dairbekov-Paternain-SU-2007-magnetic-rigidity}). In our next lemma, we deduce the $\lambda$-Jacobi equation in the case of $\lambda\in C^{\infty}(SM)$.
\begin{lemma}[$\lambda$-Jacobi equation]\label{lm:lambda:jac:var}Let $(M, g)$ be a Riemannian surface with or without boundary and $\lambda \in C^{\infty}(SM)$. Let $\gamma$ be a $\lambda$-geodesic segment in $M$ and $\gamma_s$ be a variation of $\gamma$. Then the variation field $J(t) = \left.\partial_s \gamma_s(t)\right|_{s=0}$ of a variation through $\lambda$-geodesics satisfies the $\lambda$-Jacobi equation
\begin{equation}\label{eq:jac:var:lambda}
     D_t^2 J(t)=  \langle(J(t), \dot{J}(t)), \nabla_{S M} \lambda\rangle_G \ i \dot{\gamma}(t)+ \lambda(\gamma(t), \dot{\gamma}(t)) \nabla_{J}(i\dot\gamma)+R(\dot{\gamma}(t), J(t)) \dot{\gamma}(t)
\end{equation}
where $R$ is the Riemann curvature tensor of $(M,g)$ and $G$ is the Sasaki metric on $SM$.
\end{lemma}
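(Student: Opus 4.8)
The plan is to differentiate the $\lambda$-geodesic equation in the variation parameter $s$ and then set $s=0$, turning the resulting $s$-derivatives into $t$-derivatives of $J$ by means of the symmetry lemma for covariant differentiation together with the definition of the Riemann curvature tensor. Writing $\Gamma(s,t)=\gamma_s(t)$ and introducing the vector fields $S:=\partial_s\Gamma$ and $T:=\partial_t\Gamma=\dot\gamma_s$ along $\Gamma$, each curve $\gamma_s$ satisfies $D_tT=\lambda(\Gamma,T)\,iT$, and I would record the two standard facts that I will use: the symmetry lemma $D_sT=D_tS$ (torsion-freeness of $\nabla$, cf.\ \cite{Lee:riem:2nd}) and the curvature identity $D_sD_tT-D_tD_sT=R(S,T)T$. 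Since $\lambda$-geodesics have constant speed, assuming $\gamma$ is unit speed the lifted curve $s\mapsto(\gamma_s(t),\dot\gamma_s(t))$ takes values in $SM$, and differentiating $|T|^2\equiv1$ in $s$ gives $D_tS=D_sT\perp T$, which is exactly the statement that the velocity of this lifted curve is tangent to $SM$.

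Applying $D_s$ to the $\lambda$-geodesic equation, the left-hand side becomes, using the two facts above,
\[
D_sD_tT=D_tD_sT+R(S,T)T=D_t^2S+R(S,T)T.
\]
For the right-hand side I would use the product rule
\[
D_s\big(\lambda(\Gamma,T)\,iT\big)=\partial_s\!\big[\lambda(\Gamma,T)\big]\,iT+\lambda(\Gamma,T)\,D_s(iT),
\]
and identify the two factors. The scalar $\partial_s[\lambda(\Gamma,T)]$ is the derivative of $\lambda\in C^\infty(SM)$ along the curve $s\mapsto(\gamma_s(t),\dot\gamma_s(t))$; the velocity of this curve is the tangent vector to $SM$ whose horizontal and vertical components are $S$ and $D_sT=D_tS$, respectively, so by the definition of the Sasaki gradient
\[
\partial_s\big[\lambda(\Gamma,T)\big]=\big\langle (S,D_tS),\nabla_{SM}\lambda\big\rangle_G.
\]
The remaining factor is $D_s(iT)=\nabla_{\partial_s\Gamma}(i\dot\gamma_s)$, which at $s=0$ is precisely $\nabla_J(i\dot\gamma)$.

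Setting $s=0$, where $S=J$, $T=\dot\gamma$ and $D_tS=D_tJ=\dot J$, and equating the two sides yields
\[
D_t^2J+R(J,\dot\gamma)\dot\gamma=\big\langle (J,\dot J),\nabla_{SM}\lambda\big\rangle_G\,i\dot\gamma+\lambda(\gamma,\dot\gamma)\,\nabla_J(i\dot\gamma),
\]
and rearranging by the antisymmetry $R(J,\dot\gamma)=-R(\dot\gamma,J)$ gives exactly \eqref{eq:jac:var:lambda}. The only genuinely delicate step is the identification of $\partial_s[\lambda(\Gamma,T)]$ with the Sasaki inner product $\langle(S,D_tS),\nabla_{SM}\lambda\rangle_G$: it rests on knowing that the horizontal/vertical splitting of the velocity of the lift $s\mapsto(\gamma_s,\dot\gamma_s)$ has components $S$ and $D_sT$, with the vertical part automatically orthogonal to $T$ so that the vector is tangent to $SM$. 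Everything else is a routine application of the symmetry lemma and the curvature identity, and the curvature sign is checked against the classical case $\lambda\equiv0$, where the identity reduces to $D_t^2J+R(J,\dot\gamma)\dot\gamma=0$.
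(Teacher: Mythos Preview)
Your proof is correct and follows essentially the same approach as the paper: both differentiate the $\lambda$-geodesic equation in $s$, use the symmetry lemma $D_sT=D_tS$ and the curvature identity to convert $D_sD_tT$ into $D_t^2J+R(J,\dot\gamma)\dot\gamma$, apply the product rule to $D_s(\lambda\,iT)$, and identify $\partial_s[\lambda(\gamma_s,\dot\gamma_s)]$ with the Sasaki inner product $\langle(J,D_tJ),\nabla_{SM}\lambda\rangle_G$ via the horizontal/vertical splitting of the lifted velocity. Your explicit remark that $D_tS\perp T$ (so the lifted curve lies in $SM$) is a nice touch that the paper leaves implicit in the proof and only spells out in a subsequent remark.
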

\begin{remark}
In the article, we will only consider variations of a unit speed $\lambda$-geodesic. Therefore, Jacobi fields have only one tangential component as the scaling of speed is not permissible under this assumption.
\end{remark}
\begin{proof} 
Let $\Gamma(s, t) = \gamma_s(t)$ be a smooth variation of $\lambda$-geodesics on the Riemannian surface $M$. Note that the family  $\gamma_s(t)$ of $\lambda$-geodesics satisfies
\begin{equation}\label{eq:geodesic:var}
    D_t \partial_t \gamma_s(t)=\lambda i \dot{\gamma}_s.
\end{equation} Define $J(t) = \left.\partial_s \gamma_s(t)\right|_{s=0}$. We first compute $D_t^2 J(t)$.
We denote by $D_s = \nabla_{\partial_s\gamma_s}$ the covariant derivative along $\partial_s \gamma_s$. 
According to \cite[Lemma 6.2]{Lee:riem:2nd}, we have the torsion-free property of the connection $\nabla$,  
\begin{equation}\label{eq:symmetry:thm}
    D_t \partial_s \gamma_s(t)=D_s \partial_t \gamma_s(t).
\end{equation}
Recall that the Riemann curvature tensor $R$ satisfies 
\begin{equation}\label{eq:curv:jac}
    D_t D_s W-D_s D_t W=R\left(\partial_t \gamma_s, \partial_s \gamma_s\right) W
\end{equation}
for any vector field $W$ along $\gamma_s$, see \cite[Proposition 7.5]{Lee:riem:2nd}.
\begin{comment}
\end{comment}
Combining \eqref{eq:symmetry:thm}, \eqref{eq:curv:jac} and \eqref{eq:geodesic:var}, we have
\begin{equation}\label{eq:j:int::1}
    \begin{split}
     D_t^2 J(t) & =\left.D_t D_t \partial_s \gamma_s(t)\right|_{s=0}  =\left.D_t D_s \partial_t \gamma_s(t)\right|_{s=0}
     \\ & =\left.D_s D_t \partial_t \gamma_s(t)\right|_{s=0}+R(\dot{\gamma}(t), J(t)) \dot{\gamma}(t)\\
&=D_s (\lambda(\gamma_s(t),\dot\gamma_s(t) )i\dot\gamma_s(t))|_{s=0}+R(\dot{\gamma}(t), J(t)) \dot{\gamma}(t).
\end{split}
\end{equation}
Applying the product rule for the covariant derivative along a curve for a scalar function $f$ and a vector field $V$, we have
\[
    D_s(f V)=\left(\partial_s f\right) V+f D_s V,
\]
see for instance,  \cite[Theorem 4.24]{Lee:riem:2nd}. This implies  
\begin{equation}\label{eq:j:int:2}
    D_s (\lambda(\gamma_s(t),\dot\gamma_s(t) )i\dot\gamma_s(t))=\partial_s(\lambda(\gamma_s(t),\dot\gamma_s(t))) i\dot\gamma_s(t)+\lambda(\gamma_s(t),\dot\gamma_s(t)) D_s(i\dot{\gamma_s}(t)).
\end{equation}
Since $\lambda\in C^{\infty}(SM)$, we see that
\begin{align}\label{eq:j:int:3}
    \partial_s\left(\lambda\left(\gamma_s(t), \dot{\gamma}_s(t)\right)\right)|_{s=0}&=\langle ( \partial_s \gamma_s(t), D_s\dot\gamma_s(t) )|_{s=0}, \nabla_{SM}\lambda \left(\gamma_s(t), \dot{\gamma}_s(t)\right)|_{s=0}\rangle_{G}\notag\\
    &= \langle (J(t),D_tJ(t) ), \nabla_{SM}\lambda (\gamma(t),\dot\gamma(t)) \rangle_G.
\end{align}
Combining \eqref{eq:j:int::1}, \eqref{eq:j:int:2} and \eqref{eq:j:int:3}, we have
\begin{align*}
    D_t^2 J(t)&=\left\langle\left(J(t), D_t J(t)\right), \nabla_{S M} \lambda(\gamma(t), \dot{\gamma}(t))\right\rangle_G i\dot \gamma(t)+\lambda(\gamma(t), \dot{\gamma}(t)) \nabla_J(i \dot{\gamma})\\&\qquad+R(\dot{\gamma}(t), J(t)) \dot{\gamma}(t).
\end{align*}
We thus have proved the lemma.
\end{proof}
Recall from \cite[p. 82]{GIP2D}, the connection map $\mathfrak{K} : T_{(x,v)}SM \to T_xM$ is defined as follows: For a given $\xi \in T_{(x,v)}SM$, consider a curve $Z : (-\epsilon, \epsilon) \to SM$ with $Z(0) = (x,v)$ and $\dot{Z}(0) = \xi$. We can represent the curve $Z(s)$ as a pair $(\alpha(s), W(s))$. Then the connection map acts on $\xi$ as $\mathfrak{K}\xi := \left. D_sW \right|_{s=0}$, where $D_s$ denotes the covariant derivative of $W$ along $\alpha$. From \cite[eq. (3.12)]{GIP2D}, any $\xi\in T_{(x,v)}SM$ can be written as \begin{align}\label{eq:spliting}
    \xi=(d\pi(\xi),\mathfrak K\xi)
\end{align}
where $d \pi(\xi)$ lies in the horizontal subbundle and $\mathfrak K\xi$ resides in the vertical subbundle.
\begin{remark}\label{rm:decompositon}
    To simplify, we need to decompose $(J(t),D_tJ(t) )$ into horizontal and vertical sub bundles in order to use the Sasaki metric property. Now, 

\begin{align*}
    J(t) &= \left.\partial_s \gamma_s(t)\right|_{s=0}=\left.\partial_s \pi(\phi_t(\gamma_s(0))\right|_{s=0}= d\pi(\phi_t (\gamma_s(0)))\partial_s\phi_t(\gamma_s(0))|_{s=0}
\end{align*}
and 
\begin{align}\label{eq:int:j:100}
    D_tJ(t)&=\left.D_t\partial_s \gamma_s(t)\right|_{s=0}=\left.D_s\partial_t \gamma_s(t)\right|_{s=0}=\left.D_s\dot\gamma_{s}(t)\right|_{s=0}.
\end{align}
Consider the map $Z_t(s)=\phi_t(\gamma_s(0))=(\gamma_s(t),\dot\gamma_s(t) )$. By the definition of the connection map $\mathfrak K$, we have that $\mathfrak K (\partial_s \phi_t(\gamma_s(0))|_{s=0} ) =D_s\dot\gamma_{s}(t)|_{s=0}$. Thus, we can deduce from \eqref{eq:int:j:100} that $D_tJ(t)=\mathfrak K (\partial_s \phi_t(\gamma_s(0))|_{s=0} )$.
According to \cite[eq. (3.12)]{GIP2D}, any vector $\xi \in T_{(x, v)} S M$ can be decomposed as $\xi=\left(\xi_H, \xi_V\right)$, where $\xi_H=d \pi(\xi)$ represents the horizontal component and $\xi_V=\mathfrak K \xi$ denotes the vertical component.
Now by taking $\xi=\partial_s \phi_t(\gamma_s(0))|_{s=0}$, we can write 
\begin{align*}
   \partial_s \phi_t(\gamma_s(0)= (J(t),D_tJ(t) )
\end{align*}
where $J(t)=(  \partial_s \phi_t(\gamma_s(0))|_{s=0})_H$ and $D_tJ(t)=(  \partial_s \phi_t(\gamma_s(0))|_{s=0})_V$. Additionally, the gradient of $\lambda$ on $SM$, denoted by $\nabla_{SM}\lambda$, can be decomposed into its horizontal and vertical components, expressed as $((\nabla_{SM}\lambda)_H,(\nabla_{SM}\lambda)_V)$. By the definition of the Sasaki metric (see for instance \cite[eq. (3.14)]{GIP2D}), we have
\begin{align}
    \left\langle(J(t), \dot{J}(t)), \nabla_{S M}(\lambda(\gamma(t), \dot{\gamma}(t)))\rangle_G=\left\langle J(t),\left(\nabla_{S M} \lambda\right)_H\right\rangle_g+\left\langle D_t J(t),\left(\nabla_{S M} \lambda\right)_V\right\rangle_g\right.. 
\end{align}
\end{remark}

\begin{definition}[$\lambda$-exponential map]
 Let $(M,g)$ be a Riemannian surface with or without boundary and $\lambda\in C^{\infty}(SM)$. For each point $x \in M$, let us define the maximal domain $D_x$ as follows:
\[D_x:=\left\{t v \in T_x M ; v \in S_x M \text { and } t \in[0, \tau(x, v)]\right\}.\]
The \emph{$\lambda$-exponential map} $\exp_x^{\lambda}: D_x \rightarrow M$ is given by $\exp_x^{\lambda}(t v) = \gamma_{x, v}(t)$, where $\gamma$ is a $\lambda$-geodesic.
\end{definition}
\begin{remark}
We set $\tau(x,v):=\infty$ if $\partial M = \emptyset$ and, in general, if the flow $\phi_t(x,v)$ does not reach the boundary.
\end{remark}
\begin{remark}
Notice that $\exp _x^\lambda(tv)=\pi\circ \phi_t(x,v)$ for $t\ge 0$, where $\pi$ is the projection $\pi(x, v) = x$. Hence, the $\lambda$-exponential map is smooth on $D_x \setminus {0}$.
\end{remark}
\noindent We denote $\lambda(t):=\lambda(\gamma(t),\dot\gamma(t) )$. The following lemma is a generalization of \cite[Proposition 10.2]{Lee:riem:2nd} in the case of $\lambda$-Jacobi field.
\begin{lemma}[Existence and uniqueness of $\lambda$-Jacobi fields]\label{lemma:existence and uniqueness of Jacobi fields} 
Let $(M, g)$ be a Riemannian surface with or without boundary and $\lambda \in C^{\infty}(SM)$. Suppose $I\subset\mathbb{R}$ is an interval and $\gamma: I \rightarrow M$ is a $\lambda$-geodesic with $\gamma(t_0) = x$ for some $t_0 \in I$. For any pair $(v, w) \in S_xM \times T_xM$, there exists a unique $\lambda$-Jacobi field $J$ along $\gamma$ satisfying the initial conditions 
\[
J(t_0) = v \ \text{and}\ \ D_tJ(t_0) = w.
\]
\end{lemma}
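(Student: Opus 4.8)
The plan is to recognize the $\lambda$-Jacobi equation \eqref{eq:jac:var:lambda} as a second-order \emph{linear} ODE for the vector field $J$ along $\gamma$ and then invoke the standard global existence and uniqueness theorem for linear systems. First I would fix a parallel orthonormal frame $\{e_1(t), e_2(t)\}$ along $\gamma$, obtained by parallel transporting an orthonormal basis of $T_xM$ along the smooth curve $\gamma$; such a frame exists and is smooth on all of $I$. Writing $J(t) = y^1(t)e_1(t) + y^2(t)e_2(t)$, parallelism gives $D_t J = \dot y^i e_i$ and $D_t^2 J = \ddot y^i e_i$, so the left-hand side of \eqref{eq:jac:var:lambda} becomes $\ddot y^i e_i$.

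Next I would check that every term on the right-hand side of \eqref{eq:jac:var:lambda} is linear in the pair $(J, D_t J)$ with coefficients that are smooth functions of $t$. The curvature term $R(\dot\gamma, J)\dot\gamma$ is tensorial and linear in $J$. Using that $i$ (rotation by $90^\circ$) is parallel, the middle term simplifies to $\lambda(\gamma,\dot\gamma)\, i D_t J$, which is linear in $D_t J$. The first term is linear in $(J, D_t J)$ by the Sasaki-metric decomposition recalled in Remark \ref{rm:decompositon}, namely $\langle (J, D_tJ), \nabla_{SM}\lambda\rangle_G = \langle J, (\nabla_{SM}\lambda)_H\rangle_g + \langle D_t J, (\nabla_{SM}\lambda)_V\rangle_g$, and the coefficients $(\nabla_{SM}\lambda)_{H}$, $(\nabla_{SM}\lambda)_V$ evaluated at $(\gamma(t),\dot\gamma(t))$ depend smoothly on $t$ since $\lambda \in C^\infty(SM)$ and $t \mapsto (\gamma(t),\dot\gamma(t))$ is smooth. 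Expressing all objects in the frame $\{e_i\}$, equation \eqref{eq:jac:var:lambda} therefore takes the form $\ddot y = A(t)\dot y + B(t) y$ for smooth matrix-valued functions $A, B : I \to \R^{2\times 2}$.

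Then I would rewrite this as a first-order linear system $\dot Y = \mathcal A(t) Y$ with $Y = (y, \dot y) \in \R^4$ and $\mathcal A(t)$ a smooth $4\times 4$ matrix. By the standard existence and uniqueness theorem for linear ODEs with continuous coefficients, for each prescribed initial vector $Y(t_0)$ there is a unique solution defined on the entire interval $I$, depending smoothly on $t$ because $\mathcal A$ is smooth. Decoding the initial data via $v = v^i e_i(t_0)$ and $w = w^i e_i(t_0)$ fixes $Y(t_0) = (v^1, v^2, w^1, w^2)$, and the resulting $J(t) = y^i(t) e_i(t)$ is the unique solution of \eqref{eq:jac:var:lambda} with $J(t_0)=v$ and $D_tJ(t_0)=w$; the argument works verbatim for arbitrary $v \in T_xM$, and in particular for $v \in S_xM$ as stated.

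The one point needing care---the main, and fairly mild, obstacle---is justifying the linearity and smoothness of the coefficients: interpreting $\nabla_J(i\dot\gamma)$ correctly as $i D_t J$ (via parallelism of $i$ applied to $D_s(i\dot\gamma_s)=iD_s\dot\gamma_s=iD_tJ$) and invoking Remark \ref{rm:decompositon} to split the $\nabla_{SM}\lambda$ term into its horizontal and vertical parts. Once the equation is confirmed to be linear with smooth coefficients, global existence and uniqueness on all of $I$ follows automatically from linear ODE theory, with no possibility of finite-time blow-up.
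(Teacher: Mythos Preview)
Your argument is correct and is precisely the ``standard Riemannian proof with obvious modifications'' that the paper invokes: the authors omit the proof entirely, noting only that it follows the classical argument, and your reduction to a linear first-order system via a parallel orthonormal frame is exactly that argument. Your handling of the two nonstandard terms---interpreting $\nabla_J(i\dot\gamma)$ as $iD_tJ$ via parallelism of $i$, and splitting the Sasaki gradient term via Remark~\ref{rm:decompositon}---is the ``obvious modification'' needed, so there is nothing to add.
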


The following lemma is a converse of Lemma \ref{lm:lambda:jac:var}, which tells that any $\lambda$-Jacobi field is the variation field of a variation of $\lambda$-geodesics. A similar result has been proved in \cite[Proposition 10.4]{Lee:riem:2nd} in the case of $\lambda=0$.
\begin{lemma}\label{lm:converse:jacobi}
Let $(M, g)$ be a Riemannian surface with or without boundary and $\lambda \in C^{\infty}(SM)$.
 Suppose $I\subset\mathbb{R}$ is a compact interval and $\gamma: I \rightarrow M$ is a $\lambda$-geodesic. Then every $\lambda$-Jacobi field along $\gamma$ corresponds to the variation field of a unit speed $\lambda$-geodesic variation of $\gamma$.
\end{lemma}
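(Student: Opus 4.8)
The plan is to invert the construction of Lemma~\ref{lm:lambda:jac:var}: starting from an arbitrary $\lambda$-Jacobi field $J$ along $\gamma$, I will manufacture a curve of initial conditions inside $SM$, let it generate a unit-speed $\lambda$-geodesic variation of $\gamma$, and then match initial data and appeal to uniqueness. Fix $t_0\in I$ and write $p=\gamma(t_0)$, $v=\dot\gamma(t_0)\in S_pM$, and $(v_0,w_0):=(J(t_0),D_tJ(t_0))$. Using the horizontal/vertical splitting \eqref{eq:spliting} of $T_{(p,v)}SM$, under which a tangent vector $\xi$ is identified with $(d\pi(\xi),\mathfrak{K}\xi)$, I choose $\xi\in T_{(p,v)}SM$ with $d\pi(\xi)=v_0$ and $\mathfrak{K}\xi=w_0$, together with any smooth curve $\sigma:(-\varepsilon,\varepsilon)\to SM$, $\sigma(s)=(x_s,v_s)$, satisfying $\sigma(0)=(p,v)$ and $\dot\sigma(0)=\xi$.

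Next I define the variation $\gamma_s(t):=\pi\circ\phi_{t-t_0}(x_s,v_s)$. By compactness of $I$ and the smooth dependence of the $\lambda$-geodesic flow on its initial data, each $\gamma_s$ is a unit-speed $\lambda$-geodesic defined on all of $I$ for $|s|$ small, and $\gamma_0=\gamma$. Its variation field $\tilde J(t):=\partial_s\gamma_s(t)|_{s=0}$ is a $\lambda$-Jacobi field by Lemma~\ref{lm:lambda:jac:var}. Evaluating the identification of Remark~\ref{rm:decompositon} at $t=t_0$, where $\phi_0(\sigma(s))=\sigma(s)$, gives $\tilde J(t_0)=d\pi(\xi)=v_0=J(t_0)$ and $D_t\tilde J(t_0)=\mathfrak{K}\xi=w_0=D_tJ(t_0)$. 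Since $\tilde J$ and $J$ are $\lambda$-Jacobi fields with the same initial data at $t_0$, the uniqueness part of Lemma~\ref{lemma:existence and uniqueness of Jacobi fields} forces $\tilde J=J$ on $I$, which is the assertion.

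The \emph{main obstacle} is the unit-speed constraint hidden in the splitting: the vertical subspace of $T_{(p,v)}SM$ is the tangent space to the fibre $S_pM$ at $v$, so $\mathfrak{K}\xi$ is necessarily orthogonal to $v$, and the vector $\xi$ above exists only when $w_0=D_tJ(t_0)\perp\dot\gamma(t_0)$. This orthogonality is exactly the property enjoyed by $\lambda$-Jacobi fields that arise from unit-speed variations: differentiating $|\dot\gamma_s|_g^2\equiv1$ in $s$ and using the symmetry $D_s\dot\gamma_s=D_t\partial_s\gamma_s$ yields $\langle D_tJ,\dot\gamma\rangle\equiv0$ (cf.\ the remark following Lemma~\ref{lm:lambda:jac:var} on the restricted tangential component). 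Together with the complete freedom in $J(t_0)\in T_pM$, this confines the admissible data $(v_0,w_0)$ to a $3$-dimensional space, matching $\dim T_{(p,v)}SM$, so that the realization by a unit-speed variation is always available; a Jacobi field carrying a genuine speed-changing tangential part cannot be produced this way, which is precisely why the statement is phrased for variations through unit-speed $\lambda$-geodesics.
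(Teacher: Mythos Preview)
Your proposal is correct and is exactly the ``standard Riemannian proof with obvious modifications'' that the paper explicitly defers to (the paper omits a proof of this lemma entirely, pointing to \cite[Proposition~10.4]{Lee:riem:2nd}). Your choice to run the construction directly on $SM$ via a curve $\sigma(s)=(x_s,v_s)$ and the flow $\phi_{t-t_0}$, rather than via a base curve in $M$ together with the $\lambda$-exponential map, is the natural adaptation here since $\lambda$ lives on $SM$; it also dovetails with Lemma~\ref{lm:differential_flow}, which packages the initial-data matching $( \tilde J(t_0), D_t\tilde J(t_0)) = (d\pi(\xi),\mathfrak{K}\xi)$ you need. Your identification of the constraint $D_tJ(t_0)\perp\dot\gamma(t_0)$ as the only obstruction, and your observation that it is automatically satisfied because the paper's $\lambda$-Jacobi fields are \emph{defined} as variation fields through unit-speed $\lambda$-geodesics (cf.\ the remark after Lemma~\ref{lm:lambda:jac:var}), is precisely the ``obvious modification'' in question; the dimension count in your last paragraph is a nice sanity check but not needed for the argument.
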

\noindent 
We omit the proofs of Lemmas \ref{lemma:existence and uniqueness of Jacobi fields} and \ref{lm:converse:jacobi}. One may prove them following the standard Riemannian proofs with obvious modifications.

We now introduce the notion of Lie bracket, following \cite[Remark 4.13]{Marry:Paternain:2011:notes} and we require this in our later analysis. Let $Y$ and $Z$ be two vector fields on a manifold $N$. We will denote by $\psi_t$ the local flow of the vector field $Y$.
The Lie bracket of $Y$ and $Z$, denoted by $[Y, Z]$, can then be defined by setting 
\begin{equation}\label{eq:lie:deri}
    [Y, Z](x)=\left.\frac{d}{d t}\right|_{t=0} d \psi_{-t}\left(Z\left(\psi_t x\right)\right).
\end{equation}
Let $(x,v) \in SM$, $\xi \in T_{(x,v)}SM$ and $\phi_t$ be a $\lambda$-geodesic flow.
Similarly to \cite[p. 38]{Marry:Paternain:2011:notes}, we write $F(t):=F(\phi_t(x,v))$, $X_{\perp}(t):=X_\perp(\phi_t(x,v))$ and $V(t) := V(\phi_t(x,v))$ with a slight abuse of notation. We can express $\xi$ as $\xi=aF+bX_{\perp}+cV$, where $a,b,c\in \mathbb R$ and ${F,V,X_{\perp}}$ form a basis for $T_{(x,v)}SM$, see in \cite[eq. (4.2.2.)]{Marry:Paternain:2011:notes}. Furthermore, we can find smooth functions $a(t), b(t), c(t)$ satisfying 
\begin{equation}\label{eq:in:a0}
d \phi_t(\xi)=a(t) F(t)+b(t) X_{\perp}(t)+c(t) V(t),
\end{equation}
with initial conditions $a(0)=a$, $b(0)=b$, and $c(0)=c$. We assume here that \eqref{eq:in:a0} is defined for all $t \in I $, where the interval $I$ might be unbounded. When $M$ is a closed surface, then one may take $I = \R$ as the $\lambda$-geodesic flow is defined for all times.

The coefficients $a(t),b(t),c(t)$ satisfy a certain system of ODEs (see e.g. \cite[Proposition 4.14]{Marry:Paternain:2011:notes}). In the next lemma, we will prove a similar result in the case of $\lambda$-geodesics corresponding to the $\lambda$-Jacobi equation. See also \cite{Assylbekov:Dairbekov:2018,Mettler-Paternain-vortices-2022,Zhang:2023} for an equivalent result. In comparison to \cite[Section 3.2]{Mettler-Paternain-vortices-2022}, we remark that our definition of $K_{\lambda}$ is different by including the term $FV(\lambda)$ in $K_{\lambda}$, $H=-X_\bot$, and the formulas look a bit different because of these conventions.
\begin{lemma}
Let $(M, g)$ be a closed surface and $\lambda \in C^{\infty}(SM)$. If functions $a(t), b(t),$ and $c(t)$ satisfy the equation \eqref{eq:in:a0} with the initial conditions $a(0) = a$, $b(0) = b$, and $c(0) = c$, then these functions satisfy the following set of differential equations
\begin{align}\label{eq:in:a1}
\begin{split}
    \dot{a} & =-\lambda b, \\
\dot{b} & =-c, \\
\dot{c}-cV(\lambda  )& =(K_{\lambda}-F V(\lambda))b  .
\end{split}
\end{align}
\end{lemma}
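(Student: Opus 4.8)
The plan is to differentiate the defining relation \eqref{eq:in:a0} along the flow and to read off the three ODEs by comparing coefficients in the moving frame $\{F,X_\perp,V\}$. The essential tool is the Lie-derivative identity encoded in the definition \eqref{eq:lie:deri}: writing $p=(x,v)$ and using the group property of $\phi_t$, for any smooth vector field $Z$ on $SM$ one has
\[
\frac{d}{dt}\,d\phi_{-t}\big(Z(\phi_t p)\big)=d\phi_{-t}\big([F,Z](\phi_t p)\big),
\]
precisely because $F$ is the infinitesimal generator of $\phi_t$. I would first record this identity (it follows from \eqref{eq:lie:deri} by translating the base point from $0$ to $t$ via the flow), together with the fact that $d\phi_{-t}(F(\phi_t p))=F(p)$ is constant in $t$, since the generator is invariant under its own flow.

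First I would pull the relation \eqref{eq:in:a0} back to the fixed tangent space $T_pSM$ by applying $d\phi_{-t}$, obtaining
\[
\xi=a(t)\,d\phi_{-t}(F(t))+b(t)\,d\phi_{-t}(X_\perp(t))+c(t)\,d\phi_{-t}(V(t)),
\]
where $F(t)=F(\phi_t p)$ and so on. The left-hand side is independent of $t$, so differentiating, using the Lie-derivative identity on the $X_\perp$ and $V$ terms and the constancy of $d\phi_{-t}(F(t))$, and then applying $d\phi_t$ to return everything to $T_{\phi_t p}SM$, gives
\[
0=\dot a\,F(t)+\dot b\,X_\perp(t)+\dot c\,V(t)+b\,[F,X_\perp](t)+c\,[F,V](t).
\]

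Next I would substitute the commutator formulas \eqref{eq: twisted-commutators}. From $[V,F]=-X_\perp+V(\lambda)V$ I get $[F,V]=X_\perp-V(\lambda)V$, while $[F,X_\perp]=\lambda F-(K+X_\perp(\lambda)+\lambda^2)V$. Invoking the definition $K_\lambda=K+X_\perp(\lambda)+\lambda^2+F(V(\lambda))$, the coefficient of $V$ in $[F,X_\perp]$ becomes $-(K_\lambda-FV(\lambda))$. Since $\{F,X_\perp,V\}$ is a basis of each tangent space, equating the coefficients of $F$, $X_\perp$, and $V$ to zero yields exactly $\dot a=-\lambda b$, $\dot b=-c$, and $\dot c-cV(\lambda)=(K_\lambda-FV(\lambda))b$, which is the system \eqref{eq:in:a1}.

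The computation is essentially bookkeeping once the Lie-derivative identity is in place, so the hard part is not any single estimate but rather two points requiring care: justifying the differentiation identity for the moving frame and fixing the signs in the commutator relations, and correctly absorbing the term $F(V(\lambda))$ so that the curvature coefficient appears as $K_\lambda-FV(\lambda)$ rather than the raw $K+X_\perp(\lambda)+\lambda^2$. This last bookkeeping step is exactly where the chosen convention for $K_\lambda$ enters, and it is what makes the final equation take the stated form.
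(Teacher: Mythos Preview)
Your argument is correct and follows essentially the same route as the paper: pull back \eqref{eq:in:a0} by $d\phi_{-t}$, differentiate using the Lie-derivative identity \eqref{eq:lie:deri}, substitute the commutators \eqref{eq: twisted-commutators}, and equate coefficients in the frame $\{F,X_\perp,V\}$. The only cosmetic differences are that the paper justifies the existence and invertibility of $d\phi_{-t}$ explicitly via the dual flow $\phi_{-t}=r\circ\phi_t^{-}\circ r$, and it records the vanishing of the $a$-term through $[F,F]=0$ rather than through the constancy of $d\phi_{-t}(F(t))$; both variants are equivalent.
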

\begin{proof} Define $\phi_{-t}:=r\circ \phi_t^{-}\circ r$, where $\phi_t^{-}$ is the dual $\lambda$-geodesic flow and $r$ is reversion map i.e., $r(x,v)=(x,-v)$. Since composition of smooth maps is smooth and $\phi_{-t}\circ\phi_t=\operatorname{Id} $ in $SM$, we have $\phi_{-t}$ is a smooth map. Therefore, for each $t \in [0, \infty)$, $\phi_t: SM \to SM$ is a diffeomorphism. This implies that the differential $d\phi_t$ is invertible for every $t \in [0, \infty)$. 
 Consequently, the differential $d\phi_{-t}$ is invertible for all $t \in [0, \infty)$ and hence $d \phi_t^{-1}=d \phi_{-t}$.
   Similar to \cite[Proof of Proposition 4.14]{Marry:Paternain:2011:notes}, we apply $d\phi_{-t}$ to both sides of \eqref{eq:in:a0}. Then we obtain
   \begin{equation}\label{eq:in:a3}
       \xi=a(t) d \phi_{-t}(F(t))+b(t) d \phi_{-t}(X_{\perp}(t))+c(t) d \phi_{-t}(V(t)).
   \end{equation}
   Differentiating both sides of the above identity with respect to $t$ and applying the derivative formula for Lie brackets \eqref{eq:lie:deri}, we have 
   \begin{align*}
0=  \frac{d}{d t}(\xi) 
= & \dot{a}(t) d \phi_{-t}(F(t))+a(t) d \phi_{-t}([F, F](t))+\dot{b}(t) d \phi_{-t}(X_{\perp}(t)) \\
& +b(t) d \phi_{-t}([F, X_{\perp}](t))+\dot{c}(t) d \phi_{-t}(V(t))+c(t) d \phi_{-t}([F, V](t)).
\end{align*}
Applying the commutator formulas, linearity and grouping like terms, we obtain
\begin{align*}
    0&=d \phi_{-t}\{(\dot{a}(t)+\lambda b(t)) F(t)+(\dot{b}(t)+c(t)) X_{\perp}(t)\\
    &\qquad+(\dot{c}(t)-(K_{\lambda}-F V(\lambda)) b(t)-c(t)V(\lambda)) V(t)\}.
\end{align*}
Since $d\phi_{-t}$ is invertible and $\{F(t), X_{\perp}(t), V(t)\}$ is a basis of each tangent space $T_{\phi_t\left(x, v\right)} S M$, the coefficients of $F(t), X_{\perp}(t)$ and $V(t)$ must vanish for all $t$, and hence the result follows.
\end{proof}

\begin{lemma}\label{lm:differential_flow}
Let $(M,g)$ be a  Riemannian surface with or without boundary and $\lambda \in C^{\infty}(SM)$.  For any $(x, v) \in SM$ with $\xi \in T_{(x, v)} SM$, the differential of the $\lambda$-geodesic flow can be decomposed as 
\begin{equation}\label{eq:in:a4}
    d \phi_t(\xi)=\left(J_{\xi}(t), D_t J_{\xi}(t)\right),
\end{equation} 
where $J_{\xi}$ is the unique Jacobi field along the $\lambda$-geodesic $\pi(\phi_t(x, v))$ with the initial condition $(J_{\xi}(0), D_t J_{\xi}(0))=\xi$.
\end{lemma}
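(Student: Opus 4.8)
The plan is to realize $d\phi_t(\xi)$ as the variation field of a suitable family of $\lambda$-geodesics and then invoke Lemma \ref{lm:lambda:jac:var} together with the horizontal--vertical splitting recorded in \eqref{eq:spliting} and Remark \ref{rm:decompositon}. First I would fix a smooth curve $Z\colon(-\varepsilon,\varepsilon)\to SM$ with $Z(0)=(x,v)$ and $\dot Z(0)=\xi$, and write $Z(s)=(\alpha(s),W(s))$ with $\alpha(s)\in M$ and $W(s)\in S_{\alpha(s)}M$. Setting $\gamma_s(t):=\pi(\phi_t(Z(s)))=\gamma_{\alpha(s),W(s)}(t)$ produces a smooth one-parameter family of $\lambda$-geodesics with $\gamma_0=\gamma_{x,v}$, so that its variation field $J_\xi(t):=\partial_s\gamma_s(t)|_{s=0}$ is a $\lambda$-Jacobi field along $\gamma_{x,v}$ by Lemma \ref{lm:lambda:jac:var}.

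Next I would identify the two components of $d\phi_t(\xi)=\partial_s\phi_t(Z(s))|_{s=0}$. Writing $\phi_t(Z(s))=(\gamma_s(t),\dot\gamma_s(t))$ and applying the decomposition from Remark \ref{rm:decompositon}, the horizontal part is $d\pi(\partial_s\phi_t(Z(s))|_{s=0})=\partial_s\gamma_s(t)|_{s=0}=J_\xi(t)$, while the vertical part is the connection-map image $\mathfrak{K}(\partial_s\phi_t(Z(s))|_{s=0})=D_s\dot\gamma_s(t)|_{s=0}=D_t\partial_s\gamma_s(t)|_{s=0}=D_tJ_\xi(t)$, where the last equality uses the symmetry relation \eqref{eq:symmetry:thm}. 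By the splitting \eqref{eq:spliting} this yields exactly $d\phi_t(\xi)=(J_\xi(t),D_tJ_\xi(t))$.

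Finally I would verify the claimed initial condition and the uniqueness statement. Evaluating at $t=0$ gives $J_\xi(0)=\partial_s\alpha(s)|_{s=0}=d\pi(\xi)$ and $D_tJ_\xi(0)=D_sW(s)|_{s=0}=\mathfrak{K}\xi$, so that $(J_\xi(0),D_tJ_\xi(0))=\xi$ again by \eqref{eq:spliting}; the uniqueness of the $\lambda$-Jacobi field with these prescribed initial data is Lemma \ref{lemma:existence and uniqueness of Jacobi fields}, which in particular shows that the right-hand side of \eqref{eq:in:a4} is independent of the auxiliary choice of $Z$. I do not expect a serious obstacle here: the content is essentially bookkeeping about the tangent structure of $SM$, and the only point requiring care is the correct identification of the vertical component of $\partial_s\phi_t(Z(s))|_{s=0}$ with $D_tJ_\xi(t)$ through the connection map $\mathfrak{K}$, which is precisely what Remark \ref{rm:decompositon} was set up to supply.
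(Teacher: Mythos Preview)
Your proposal is correct and follows essentially the same route as the paper: choose a curve in $SM$ through $(x,v)$ with velocity $\xi$, form the associated variation of $\lambda$-geodesics, and identify the horizontal and vertical components of $d\phi_t(\xi)$ with $J_\xi(t)$ and $D_tJ_\xi(t)$ via $d\pi$, the connection map $\mathfrak{K}$, and the symmetry relation \eqref{eq:symmetry:thm}. Your version is in fact slightly more complete, since you spell out the verification of the initial data $(J_\xi(0),D_tJ_\xi(0))=\xi$ and invoke Lemma \ref{lemma:existence and uniqueness of Jacobi fields} for uniqueness, points the paper leaves implicit.
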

\begin{proof}
Let us consider $\sigma:(-\epsilon,\epsilon)\to SM$ be a smooth curve such that $\sigma(0)=(x,v)$ and $\sigma'(0)=\xi$. Now we consider a variation of the $\lambda$-geodesic 
\begin{align*}
    \Gamma(t,s):=\gamma_{\sigma(s)}(t)=\pi(\phi_t(\sigma(s))).
\end{align*}
Now we have 
\begin{align*}
J(t)&:=\partial_s\Gamma(t,s)|_{s=0}=\partial_s\pi(\phi_t(\sigma(s))|_{s=0}=d\pi(\phi_t(\sigma(0)))d\phi_t(\sigma(0))\sigma'(0)\\
    &=d\pi(\phi_t(x,v))d\phi_t(x,v)\xi.
\end{align*}
Let $Z_t(s):=\phi_{t}(\sigma(s))=(\gamma_{\sigma(s)}(t),\dot\gamma_{\sigma(s)}(t))$. Using the definition of connection map, we have\begin{equation}\label{eq:int:ap1}
    \mathfrak K (\partial_s Z_t(s)|_{s=0} )=\mathfrak K (\partial_s\phi_{t}(\sigma(s))|_{s=0} )=D_s\dot\gamma_{\sigma(s)}(t)|_{s=0}.
\end{equation}
Combining \eqref{eq:symmetry:thm} with \eqref{eq:int:ap1}, we deduce that
\begin{align*}
    D_tJ(t)&=D_t\partial_s\Gamma(t,s)|_{s=0}=D_s\partial_t\Gamma(t,s)|_{s=0}\\
    &=D_s\partial_t\pi(\phi_t(\sigma(s)))|_{s=0}=D_s\dot\gamma_{\sigma(s)}(t)|_{s=0}\\&=\mathfrak K\left( \partial_s(\phi_t(\sigma(s)))|_{s=0}\right)=\mathfrak K d\phi_t(\sigma(0))\sigma'(0)\\&=\mathfrak K d\phi_t(x,v)\xi.
\end{align*}
From \eqref{eq:spliting}, we obtain
\begin{align*}
    d \phi_t(x, v) \xi&=(d\pi (\phi_t(x,v)) d \phi_t(x, v) \xi,\mathfrak K  d \phi_t(x, v) \xi )\\
    &=(J(t), D_tJ(t) ).
\end{align*}
This result establishes the lemma. 
\end{proof}

We will next proof an estimate needed for the proof of our main theorem.
\begin{proof}[Proof of Lemma \ref{lm:Jacobi:est}]
If $M$ is a surface with boundary, then we extend $(M,g)$ into a closed surface $N$ and extend $\lambda$ to a smooth function on $SN$ (cf. \cite[Lemma 3.1.8]{GIP2D}). Then the $\lambda$-geodesic flow is defined for all $t\in  \mathbb R$. If \eqref{eq:jac_est} holds for the closed extension $N$, then it also holds for $M$. Therefore we can assume without loss of generality, we are in a closed setting.

   Let us first define $Z = (Z_1, Z_2,Z_3)$ such that $Z_1 := a, Z_2 := b, Z_3:=c$. Then the equation \eqref{eq:in:a1} can be written as 
\begin{equation}\label{JacoEq}
\begin{cases}
D_t Z_1 = \dot{a}=-\lambda(\gamma,\dot \gamma)Z_2, \\
D_t Z_2=- Z_3,
\\
D_t Z_3= V(\lambda)Z_3+(K_{\lambda}-F V(\lambda))Z_2.
\end{cases}
\end{equation}
This is equivalent to
\[
D_t Z = A_{t}^{\gamma}Z,
\]
where
\[
A_{t}^{\gamma} =
\left[ {\begin{array}{ccc}
0&-\lambda &0\\0&0&-1 \\ 
0&(K_{\lambda}-F V(\lambda))&V(\lambda)
\end{array} } \right]
\]
is a bounded linear map for each $t$. By compactness, there is a positive constant $C=C(M,g,\lambda)$ such that $\left\|A_{t}^{\gamma}\right\| \leq C / 2$ for all geodesics $\gamma$ and all times $t$.
Thus, we have
$$
D_{t}|Z|^{2}=2\left\langle Z, A_{t}^{\gamma} Z\right\rangle \leq C|Z|^{2} .
$$
By Gr\"onwall's inequality (cf. \cite[p. 624]{Evans:book:PDE}), the above implies that $|Z(t)|^{2} \leq e^{C t}|Z(0)|^{2}$ for all $t \geq 0$.

Now if $J$ is a $\lambda$-Jacobi field, then from \eqref{eq:in:a0} and \eqref{eq:in:a4}, we have $Z=\left(J, D_{t} J\right)$ for some initial data, from which the claim follows.
\end{proof}
\subsection{Convexity, concavity and signed \texorpdfstring{$\lambda$}{lambda}-curvature}\label{sec:signed-preliminaires}
Recall that $\sff$ denotes the second fundamental form of the boundary $\partial M$, and $\nu(x)$ the inward unit normal vector to $\partial M$ at $x\in \partial M$.

\begin{definition}\label{def:lam_convex} Let $(M, g)$ be a Riemannian surface with smooth boundary and $\lambda \in C^{\infty}(SM)$. We say that the boundary $\partial M$ is \emph{strictly $\lambda$-convex at a point $x \in \partial M$} if the following inequality holds: 
$$
\sff_x(v, v) >\left\langle \lambda(x,v)iv, \nu(x)\right\rangle \quad \text { for all } v \in S_x(\partial M) .
$$ 
Similarly, we say that the boundary $\partial M$ is \emph{strictly $\lambda$-concave at a point $x \in \partial M$} if the following inequality holds:
$$
\sff_x(v, v) <\left\langle \lambda(x,v)iv, \nu(x)\right\rangle \quad \text { for all } v \in S_x(\partial M) .
$$
\end{definition}
In \cite[Lemma 3.1.12]{GIP2D}, it is demonstrated that any geodesic tangent to the boundary $\partial M$ stays in the exterior to the surface $M$ for both small positive and negative time intervals. Moreover, the surface $M$ is strictly magnetic convex at $x\in \partial M$, if $\sff(x, v) >\left\langle Y_x(v), \nu(x)\right\rangle$, holds for all $v \in S_x(\partial M)$ where $Y_x$ denotes the Lorentz force, see \cite[Lemma A.6]{Dairbekov-Paternain-SU-2007-magnetic-rigidity}. An analogous property will be discussed in the next lemma in the case of a strictly $\lambda$-convex boundary which generalizes both of these results. Before stating the next lemma, let us consider the boundary defining map following
\cite[Lemma 3.1.10]{GIP2D}. Let 
$(N,g)$ be a closed extension of $(M,g)$. Then there is a function $\rho\in C^{\infty}(N)$, called a boundary defining function, such that $\rho(x)=d(x,\partial M)$ near $\partial M$ in $M$, and $M = \{x\in N : \rho\geq 0\}, \partial M = \{x\in N : \rho= 0\},$ and $N\setminus M = \{x\in N : \rho< 0\}.$ Moreover, $\nabla \rho(x) = \nu(x)$ holds for all $x\in \partial M$.
\begin{lemma}
Let $(M,g)$ be a compact Riemannian surface with smooth boundary and $\lambda \in C^{\infty}(SM)$. Suppose $(N,g)$ is a closed extension of $(M,g)$. Then $\partial M$ is strictly $\lambda$-convex if and only if any $\lambda$-geodesic in $N$ starting from some point $(x,v)\in \partial_0SM$ satisfies $ \frac{d^2}{d t^2}[\rho \circ \gamma_{x,v}(t)]|_{t=0} <0.$ Furthermore, any $\lambda$-geodesic tangent to $\partial M$ stays outside $M$ for small positive and negative time intervals. Also any maximal $\lambda$-geodesic going from 
$\partial M$ into $M$ stays in the interior of $M$ excepts for its endpoints.
\end{lemma}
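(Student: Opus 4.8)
The plan is to study everything through the single scalar function $f(t) := \rho(\gamma_{x,v}(t))$, whose sign records whether the $\lambda$-geodesic lies inside $M$ ($f\geq 0$) or outside ($f<0$). First I would compute its first two derivatives. Since $d\rho = \langle \nabla\rho,\cdot\rangle_g$, the chain rule gives $f'(t) = \langle \nabla\rho(\gamma(t)),\dot\gamma(t)\rangle_g$. Differentiating once more, using compatibility of the Levi-Civita connection with $g$ together with the $\lambda$-geodesic equation $D_t\dot\gamma = \lambda(\gamma,\dot\gamma)i\dot\gamma$, I obtain
\[
f''(t) = \operatorname{Hess}\rho(\dot\gamma(t),\dot\gamma(t)) + \lambda(\gamma(t),\dot\gamma(t))\,\langle \nabla\rho(\gamma(t)), i\dot\gamma(t)\rangle_g,
\]
where $\operatorname{Hess}\rho(w,w) := \langle \nabla_w\nabla\rho, w\rangle_g$.

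The key identification is that for $x\in\partial M$ and $v\in T_x\partial M$ one has $\nabla\rho(x)=\nu(x)$, so that $\nabla_v\nabla\rho = \nabla_v\nu$ and hence $\operatorname{Hess}\rho(v,v) = \langle \nabla_v\nu, v\rangle_g = -\sff_x(v,v)$ by the definition of the second fundamental form. Evaluating $f''$ at $t=0$ for a glancing direction $(x,v)\in\partial_0 SM$ (so $x\in\partial M$, $|v|_g=1$, $\langle v,\nu\rangle_g=0$) then yields
\[
f''(0) = -\sff_x(v,v) + \langle \lambda(x,v)\,iv, \nu(x)\rangle_g.
\]
Therefore $f''(0)<0$ for every $v\in S_x(\partial M)$ holds precisely when $\sff_x(v,v) > \langle \lambda(x,v)iv,\nu(x)\rangle_g$ for all such $v$, which is exactly strict $\lambda$-convexity in the sense of Definition \ref{def:lam_convex}. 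This establishes the claimed equivalence.

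For the tangency statement I would take a $\lambda$-geodesic tangent to $\partial M$ at $t=0$, i.e.\ $(x,v)\in\partial_0 SM$. Then $f(0)=0$, $f'(0)=\langle\nu,v\rangle_g=0$, and $f''(0)<0$ by strict $\lambda$-convexity, so $t=0$ is a strict local maximum of $f$. Hence $f(t)<0$ for all small $t\neq0$, and by the defining property of $\rho$ this means $\gamma_{x,v}(t)\in N\setminus M$ for small positive and negative times; the curve stays outside $M$. The interior statement follows from the same second-order test applied at an interior time: if a maximal $\lambda$-geodesic $\gamma:[0,T]\to M$ had $\gamma(t_0)\in\partial M$ for some $t_0\in(0,T)$, then since $\gamma(t)\in M=\{\rho\geq0\}$ near $t_0$ while $f(t_0)=0$, the point $t_0$ would be a local minimum of $f$, forcing $f'(t_0)=0$ and $f''(t_0)\geq0$. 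But $f'(t_0)=0$ makes $\dot\gamma(t_0)$ tangent to $\partial M$, so the formula above gives $f''(t_0)=-\sff(\dot\gamma(t_0),\dot\gamma(t_0))+\langle\lambda\, i\dot\gamma(t_0),\nu\rangle_g<0$, a contradiction. Thus $\gamma\big((0,T)\big)\subset\operatorname{int}(M)$.

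The main obstacle I anticipate is purely bookkeeping: securing the sign identity $\operatorname{Hess}\rho(v,v)=-\sff_x(v,v)$ and the direction of the resulting inequality, and checking that the second-derivative formula is applied at a base point where $\nabla\rho=\nu$ and where $\dot\gamma$ is genuinely tangent (guaranteed by $f'=0$). Once these are in place, the remaining arguments are routine invocations of the first- and second-derivative tests combined with the convexity inequality.
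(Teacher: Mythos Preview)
Your proposal is correct and follows essentially the same route as the paper: compute $f'(t)=\langle\nabla\rho,\dot\gamma\rangle$, differentiate again using the $\lambda$-geodesic equation, identify $\operatorname{Hess}\rho(v,v)=-\sff_x(v,v)$ at boundary points, and read off the equivalence; the tangency claim then follows from the second-order Taylor expansion (you phrase it as the second-derivative test, the paper writes out the Taylor polynomial explicitly). One difference worth noting is that the paper's proof actually stops after the tangency statement and does not supply an argument for the final claim about maximal $\lambda$-geodesics staying in the interior; your contradiction argument via the local-minimum condition $f''(t_0)\geq 0$ fills this gap and is correct.
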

\begin{proof}
Let $\rho$ be a boundary defining function such that $\rho|_{\partial M}=0$, $\nabla \rho|_{\partial M}=\nu$, $\rho|_{M}\ge 0$ and $\rho|_{N\setminus M}<0$, see for instance \cite[Lemma 3.1.10]{GIP2D}. Let $v \in S_x(\partial M)$ and $\gamma_{x,v}(t)$ be the $\lambda$-geodesic with $\gamma_{x,v}(0)=x, \dot{\gamma}_{x,v}(0)=v$. Now $\partial M$ is strictly $\lambda$-convex if and only if 
$\sff(x,v)>\langle \nu, \lambda(x,v)iv \rangle.$ Notice that
\begin{align*}
  \left.  \frac{d^2}{d t^2}[\rho \circ \gamma_{x,v}(t)]\right|_{t=0}&= \left. \frac{d}{d t}\langle\nabla \rho(\gamma_{x,v}(t)), \dot{\gamma}(t)\rangle\right|_{t=0} \\ & =\left\langle\nabla_{\dot{\gamma}(t)} \nabla \rho(\gamma_{x,v}(t)), \dot{\gamma}(t)\right\rangle|_{t=0}+\langle\nabla \rho(\gamma_{x,v}(t)), \nabla_{\dot{\gamma}(t)}\dot{\gamma}(t)\rangle |_{t=0}\\ &=\langle \nabla_v\nu(x),v \rangle +\langle \nu, \lambda(x,v)iv \rangle\\
  &=-\sff(x,v)+\langle \nu, \lambda(x,v)iv \rangle.
\end{align*}
This implies that $\partial M$ is strictly $\lambda$-convex if and only if $\left.  \frac{d^2}{d t^2}[\rho \circ \gamma_{x,v}(t)]\right|_{t=0}<0$. By Taylor's theorem, we have 
\begin{align*}
    \rho(\gamma_{x,v}(t))&=\rho(\gamma_{x,v}(0))+\left\langle \nabla \rho \left(x,v\right), \dot{\gamma}_{x, v}\left(0\right)\right\rangle t+\frac{1}{2}  \left.  \frac{d^2}{d t^2}[\rho \circ \gamma_{x,v}(t)]\right|_{t=0} t^2+O\left(t^3\right)\\ &=\rho(\gamma_{x,v}(0))+\left\langle\nu\left(x\right), v\right\rangle t+\frac{1}{2} \left.  \frac{d^2}{d t^2}[\rho \circ \gamma_{x,v}(t)]\right|_{t=0}t^2+O\left(t^3\right)\\
    &=\frac{1}{2} \left.  \frac{d^2}{d t^2}[\rho \circ \gamma_{x,v}(t)]\right|_{t=0}t^2+O\left(t^3\right),
\end{align*}
which is negative for small $|t|$ since $\left.  \frac{d^2}{d t^2}[\rho \circ \gamma_{x,v}(t)]\right|_{t=0}<0$. This shows that for small positive and negative times $\rho(\gamma(t))<0$, i.e., $\gamma_{x,v}(t)\in N\setminus M.$
\end{proof}
In the next subsection, we will see how the signed $\lambda$-curvature for $\lambda$-geodesics is related to its dual $\lambda$-geodesic.

\subsection{Curvature of the dual \texorpdfstring{$\lambda$}{lambda}-geodesic flow}
\label{subsec:dualflows-curva} 
Let us define the \emph{reversion map} $r: SM \rightarrow SM$ by $r(x, v) = (x, -v)$. Let $u \in C^{\infty}(SM)$, and consider its extension as a homogeneous function of degree zero in $C^{\infty}(TM \setminus {0})$, denoted by $u(x, y/|y|)$. We define the horizontal and vertical derivatives as follows:\begin{align*}
\nabla_{x_i} u & =\frac{\partial}{\partial x_i}(u(x, y /|y|))-\left.\Gamma_{i k}^l v^k \partial_{v_l} u\right|_{S M}, \\
\partial_{v_i} u & =\left.\frac{\partial}{\partial y_i}(u(x, y /|y|))\right|_{S M} .
\end{align*}
Here, $\Gamma_{i k}^l$ denotes the Christoffel symbols of the metric $g$ (cf. \cite[p. 386]{Ilmavirta:Salo:2016}).
\begin{lemma}\label{eq:chain-for-antipodal-map}
Let $(M, g)$ be a Riemannian surface with or without boundary. For any function $f \in C^1(SM)$, the following formulas hold:
\begin{enumerate}[(i)]
    \item $V(f \circ r)(x,v) = ((Vf) \circ r)(x,v)$.\label{eq:Vr-composition}
\item  $X(f \circ r)(x,v) = -((Xf) \circ r)(x,v)$.\label{eq:Xr-composition}
\item $X_\bot(f \circ r)(x,v) = -((X_\bot f) \label{eq:Xbotr-composition}\circ r)(x,v)$.
\end{enumerate}
\end{lemma}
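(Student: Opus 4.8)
The plan is to reduce all three identities to a single observation about how the basic horizontal and vertical derivatives transform under the reversion map $r(x,v)=(x,-v)$, and then read off (i)--(iii) from the coordinate expressions of $V$, $X$ and $X_\bot$. Throughout I would extend $f\in C^1(SM)$ to a function $\bar f$ on $TM\setminus\{0\}$ that is homogeneous of degree zero in the fibre, so that $\bar f(x,y)=f(x,y/|y|)$. The key point is the elementary identity $\overline{f\circ r}(x,y)=\bar f(x,-y)$, which encodes the entire effect of $r$ at the level of the homogeneous extension, and which feeds directly into the coordinate formulas for $\nabla_{x_i}$ and $\partial_{v_i}$ recorded just before the statement.

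First I would compute the action of $r$ on the two building blocks $\partial_{v_i}$ and $\nabla_{x_i}$. Differentiating $\overline{f\circ r}(x,y)=\bar f(x,-y)$ and restricting to $SM$, the chain rule gives $\partial_{v_i}(f\circ r)(x,v)=-(\partial_{v_i}f)(x,-v)$, so each vertical derivative picks up a sign. Inserting this into $\nabla_{x_i}=\partial_{x_i}-\Gamma_{ik}^l v^k\partial_{v_l}$, the two sign changes in the Christoffel term (one from $\partial_{v_l}$ and one from $v^k\mapsto(-v)^k$) cancel, so that $\nabla_{x_i}(f\circ r)(x,v)=(\nabla_{x_i}f)(x,-v)$. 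In short, under $r$ the horizontal derivatives are even and the vertical derivatives are odd.

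With these two rules the three assertions follow by tracking the parity of the frame coefficients. For (ii) I use $Xf=v^i\nabla_{x_i}f$: the horizontal factor is even while the coefficient $v^i$ is odd, hence $X(f\circ r)(x,v)=v^i(\nabla_{x_i}f)(x,-v)=-(Xf)(x,-v)$, which is (ii). For (i) the cleanest route avoids coordinates: writing $Vf(x,v)=\tfrac{d}{d\theta}f(x,R_\theta v)\big|_{\theta=0}$ with $R_\theta$ the fibrewise rotation and using $-R_\theta v=R_\theta(-v)$, one gets $V(f\circ r)(x,v)=\tfrac{d}{d\theta}f(x,-R_\theta v)\big|_{0}=(Vf)(x,-v)$, which is (i); in coordinates this is the statement that $Vf=(iv)^i\partial_{v_i}f$ has an odd coefficient and an odd vertical derivative, so the signs cancel and $V$ is even. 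For (iii), since $X_\bot$ is the horizontal lift of $iv$, it is a horizontal derivative (even) with a coefficient of odd parity, so exactly as for $X$ the identity flips sign, giving $X_\bot(f\circ r)=-((X_\bot f)\circ r)$.

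The computations are routine; the only place demanding care is the sign and convention bookkeeping in the last step, namely expressing $V$ and $X_\bot$ with the correct $iv$-coefficients and tracking how the sign from the coefficient interacts with the sign from the derivative. (Note that only the parity of the coefficient matters, not its overall sign, so the conclusion is insensitive to whether one normalises with $iv$ or $v_\bot=-iv$.) As a consistency check I would verify (iii) for $f\in C^2$ from (i) and (ii) using $X_\bot=[X,V]$, via $X_\bot(f\circ r)=X(V(f\circ r))-V(X(f\circ r))=-((XVf)\circ r)+((VXf)\circ r)=-((X_\bot f)\circ r)$; however, for the stated $C^1$ regularity the direct argument (or equivalently the geometric computation of $dr$ on horizontal lifts and on the rotation field) is preferable, since it never differentiates $Xf$ or $Vf$.
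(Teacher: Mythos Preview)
Your proof is correct and is exactly the kind of straightforward coordinate/parity computation the paper has in mind; the paper itself omits the derivation entirely, stating only that ``their proofs are straightforward computations.'' Your parity bookkeeping (horizontal derivatives even, vertical derivatives odd under $r$, coefficients $v^i$ and $(iv)^i$ both odd) is clean, and the consistency check via $X_\bot=[X,V]$ for $C^2$ functions is a nice touch.
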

\noindent
We omit the derivations of the formulas in \ref{eq:chain-for-antipodal-map}. Their proofs are straightforward computations.

Note that $\lambda^-=-\lambda \circ r$ by the definitions. We define $K_{\lambda^-} (x,v)$ as the Gaussian $\lambda^{-}$ curvature corresponding to the $\lambda^{-}$-geodesic flow. We prove in the next corollary that the global negative curvature assumption for the $\lambda$-geodesic flow is equivalent to the same property of the $\lambda^{-}$-geodesic flow. 
\begin{corollary}\label{cor:dual-curvatures}
Let $(M, g)$ be a Riemannian surface with or without boundary and $\lambda \in C^{\infty}(SM)$. Then the (Gaussian) $\lambda$-curvature of the dual system satisfies  
\[K_{\lambda^{-}}(x,v) = K_{\lambda}(x,-v), \qquad \text{ for all } (x, v) \in  SM.\] 
Additionally, if $\partial M\neq \emptyset$, then the signed $\lambda$-curvature of the dual system satisfies 
\[\kappa_{\lambda^{-}}(x,v) = \kappa_{\lambda}(x,-v),\qquad \text{ for all } (x, v) \in \partial SM.\]
\end{corollary}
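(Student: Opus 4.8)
The plan is to verify both identities by direct substitution of $\lambda^- = -\lambda \circ r$ into the definitions of $K_{\lambda^-}$ and $\kappa_{\lambda^-}$, and then to transport each summand across the reversion map $r$ using the transformation formulas of Lemma \ref{eq:chain-for-antipodal-map}. Writing $F^- = X + \lambda^- V$ for the generator of the dual flow, I recall that $K_{\lambda^-} = K + X_\bot(\lambda^-) + (\lambda^-)^2 + F^-(V(\lambda^-))$. Since both the Gaussian curvature $K$ and the boundary curvature $\kappa$ depend only on the base point, they are invariant under $r$ and require no further analysis; the whole task reduces to understanding the three $\lambda$-dependent terms.

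First I would dispatch the two easy summands of $K_{\lambda^-}$. From $(\lambda^-)^2 = (-\lambda \circ r)^2 = \lambda^2 \circ r$ this term already equals $\lambda^2(x,-v)$. For the orthogonal derivative, the formula $X_\bot(f \circ r) = -(X_\bot f)\circ r$ of Lemma \ref{eq:chain-for-antipodal-map} gives $X_\bot(\lambda^-) = -X_\bot(\lambda \circ r) = (X_\bot \lambda)\circ r$, which is exactly $X_\bot(\lambda)(x,-v)$.

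The main work, and the one delicate point, is the term $F^-(V(\lambda^-))$. Using $V(f\circ r) = (Vf)\circ r$ I first obtain $V(\lambda^-) = -(V\lambda)\circ r$, and then apply $F^- = X + \lambda^- V$ to it. Here the two contributions transform differently: the term $X((V\lambda)\circ r)$ flips sign by $X(f\circ r) = -(Xf)\circ r$, whereas $V((V\lambda)\circ r)$ does not. Combined with $\lambda^- = -\lambda\circ r$, the signs conspire to give
\[
F^-(V(\lambda^-)) = (XV\lambda)\circ r + (\lambda V^2\lambda)\circ r = \big(F(V\lambda)\big)\circ r .
\]
Assembling the four terms then yields $K_{\lambda^-}(x,v) = K_\lambda(x,-v)$. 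Tracking these signs correctly, and in particular recognizing that the horizontal and vertical parts of $F^-$ behave oppositely under $r$, is where I expect the argument could most easily go astray.

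For the signed curvature identity I would substitute $\lambda^-(x,v) = -\lambda(x,-v)$ directly into $\kappa_{\lambda^-}(x,v) = \kappa - \langle \lambda^-(x,v)\, iv, \nu(x)\rangle$, obtaining $\kappa + \langle \lambda(x,-v)\, iv, \nu\rangle$. Comparing with $\kappa_\lambda(x,-v) = \kappa - \langle \lambda(x,-v)\, i(-v), \nu\rangle$ and invoking the linearity $i(-v) = -iv$ of the rotation shows the two expressions coincide. This second part is essentially a one-line computation once $i$ is treated as linear, so I anticipate no genuine obstacle there.
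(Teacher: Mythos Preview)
Your proposal is correct and follows essentially the same approach as the paper: direct substitution of $\lambda^- = -\lambda\circ r$ into the defining formulas and termwise transport via the identities of Lemma \ref{eq:chain-for-antipodal-map}, with the same sign-tracking for the $F^-(V(\lambda^-))$ term and the same one-line computation for $\kappa_{\lambda^-}$. The only cosmetic difference is that the paper writes out the intermediate lines more explicitly, whereas you summarize the crucial cancellation in a single displayed identity.
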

\begin{proof}
Let $(x,v) \in SM$. We may compute using Lemma \ref{eq:chain-for-antipodal-map} that
\begin{align*}
    &K_{\lambda^-} (x,v)\\
    &\quad= K(x)+X_{\perp}(\lambda^-)(x,v)+ (\lambda^- )^2(x,v)+(X+\lambda^- V )V(\lambda^-)(x,v)\\
    &\quad=K(x)-X_{\perp}(\lambda\circ r)(x,v)+ \lambda^2(x,-v)-(X-(\lambda\circ r) V )V(\lambda\circ r)(x,v)\\
    &\quad=K(x)+X_{\perp}(\lambda)(x,-v)+\lambda^2(x,-v) -(X-\lambda(x,-v)V )V(\lambda)(x,-v)\\
    &\quad=K(x)+X_{\perp}(\lambda)(x,-v)+\lambda^2(x,-v) -X(V(\lambda)\circ r)(x,v)+\lambda(x,-v)V )V(\lambda)(x,-v)\\
    &\quad=K(x)+X_{\perp}(\lambda)(x,-v)+\lambda^2(x,-v)+ X(V(\lambda) )(x,-v)+\lambda (x,-v)V V(\lambda)(x,-v)\\
    &\quad= K(x)+X_{\perp}(\lambda)(x,-v)+\lambda^2(x,-v)+(X+\lambda(x,-v)V )V(\lambda)(x,-v)\\
   &\quad=K_{\lambda}(x,-v).
\end{align*}
By Lemma \ref{eq:chain-for-antipodal-map}, we obtain
\begin{align*}
  \kappa_{\lambda^-} (x,v) &=\kappa(x)-\langle\nu(x), \lambda^-(x, v) i v\rangle \\
  &= \kappa(x)+\langle\nu(x), \lambda(x, -v) i v\rangle\\
  &= \kappa(x)-\langle\nu(x), \lambda(x, -v) i (-v)\rangle\\
  &=\kappa_{\lambda}(x,-v). \qedhere
\end{align*}
\end{proof}
\begin{remark} \label{rmk:eta-dual-law}
In a similar manner we can see that 
$\eta_{\lambda^{-}}(x, v) = \eta_\lambda(x, -v)$. 
\begin{align*}
    \eta_{\lambda^{-}}(x, v)&= \langle V(\lambda^{-})(x, v) v, \nu\rangle=-\langle V(\lambda)(x, -v) v, \nu\rangle=\eta_\lambda(x, -v).
\end{align*}
\end{remark}

\subsection{Proof of Lemma {\ref{lm:property:rho:kappa:eta}}}\label{sec:auxiliary-proofs-appendix}
\ref{eq:kappa:circ:1}
We have
\begin{align*}
    \kappa_{\lambda}(x,v)&=\kappa(x)-\langle\nu(x), \lambda(x, v) i v\rangle,\\
    \kappa_{\lambda}\circ \rho(x,v)
    &=\kappa(x)-\langle\nu(x), \lambda\circ \rho(x, v) i (v-2\langle v,\nu(x) \rangle\nu(x))\rangle\\
    &=\kappa(x)-\langle\nu(x), \lambda\circ \rho(x, v)  (iv-2\langle v,\nu(x) \rangle i\nu(x))\rangle\\
    &=\kappa(x)-\langle\nu(x), \lambda\circ \rho(x, v)  iv\rangle\\
&=\kappa_{\lambda\circ \rho}(x,v).
\end{align*}
\ref{eq:eta:circ:1} Let us compute \begin{align*}
    \eta_\lambda(x, v)&=\langle V(\lambda)(x, v) v, \nu\rangle,\\
 \eta_\lambda\circ \rho (x, v)   &=\left\langle (V(\lambda)\circ \rho (x, v)) (v-2\langle v,\nu\rangle\nu ), \nu\right\rangle\\
 &=\langle -(V(\lambda\circ \rho) (x, v)) (v-2\langle v,\nu\rangle\nu ), \nu\rangle\\
 &=\langle V(\lambda\circ \rho)(x, v) v, \nu\rangle\\
 &=\eta_{\lambda\circ \rho}(x, v).
\end{align*}
\ref{eq:kappa:circ:2}
By the definition of even function, we have
\begin{align*}
(\kappa_\lambda)_e&=\frac{\kappa_\lambda+\kappa_\lambda\circ \rho}{2}\\
    &=\frac{\kappa_\lambda+\kappa_{\lambda\circ \rho}}{2}\\
    &= \kappa-\left\langle\nu(x), \frac{(\lambda+\lambda \circ \rho)(x, v)}{2} i v\right\rangle\\
    &=\kappa_{{\lambda}_{e}}.
\end{align*}
Similarly, one could get $  \eta_{\lambda_e}=(\eta_\lambda)_e$.

\noindent \ref{eq:eta:circ:2} This part directly follows form \ref{eq:kappa:circ:2}. In particular, we have
\begin{align*}
     \rho^*(\kappa_{\lambda_e})&=\kappa_{\rho^*\lambda_e}=\kappa_{\lambda_e},
\end{align*}
and 
\begin{align*}
     \rho^*(\eta_{\lambda_e})&=\eta_{\rho^*\lambda_e}=\eta_{\lambda_e},
\end{align*}
since $\rho^*\lambda_e=\lambda_e$.
\qed
\subsection*{Acknowledgments} M.K. would like to thank Mikko Salo for suggesting research on the magnetic broken ray transforms and helpful discussions. J.R. thanks Gabriel P. Paternain for many helpful discussions related to this work. We thank an anonymous referee for many helpful comments that improved the final manuscript. S.R.J. and J.R. would like to thank the Isaac Newton Institute for Mathematical Sciences, Cambridge, UK, for support and hospitality during \emph{Rich and Nonlinear Tomography - a multidisciplinary approach} in 2023 where part of this work was done (supported by EPSRC Grant Number EP/R014604/1). The work of S.R.J. and J.R. was supported by the Research Council of Finland through the Flagship of Advanced Mathematics for Sensing, Imaging and Modelling (decision number 359183). S.R.J. acknowledges the Prime Minister's Research Fellowship (PMRF) from the Government of India for his PhD work.  M.K. was supported by MATRICS grant (MTR/2019/001349) of SERB. J.R. was supported by the Vilho, Yrjö and Kalle Väisälä Foundation of the Finnish Academy of Science and Letters.
\subsection*{Data availability statement} Data sharing not applicable to this article as no datasets were generated or analyzed during the current study.

\bibliography{math} 
	
\bibliographystyle{alpha}
\end{document}